\newtheorem{thm}{Theorem}[section]
\newtheorem{lem}[thm]{Lemma}
\newtheorem{cor}[thm]{Corollary}
\newtheorem{prop}[thm]{Proposition}
\theoremstyle{definition}
\newtheorem{example}[thm]{Example}
\newtheorem{note}[thm]{Note}
\newcommand{\R}{\mathbf{R}}
\newcommand{\RP}{\mathbf{RP}}
\newcommand{\ol}{\overline}
\newcommand{\B}{\mathbf{B}}
\newcommand{\C}{\mathcal{C}}
\newcommand{\dir}{\operatorname{dir}}
\newcommand{\graph}{\operatorname{graph}}
\newcommand{\cone}{\operatorname{cone}}
\newcommand{\Rays}{\operatorname{Rays}}
\renewcommand{\H}{\mathbf{H}}
\renewcommand{\S}{\mathbf{S}}
\renewcommand{\tilde}{\widetilde}
\DeclareMathOperator{\dist}{dist}
\DeclareMathOperator{\rc}{rc}
\DeclareMathOperator{\nc}{nc}
\title[Tangent Cones  and Regularity]{Tangent Cones  and Regularity of \\ Real Hypersurfaces}
\author{Mohammad Ghomi}
\address{School of Mathematics, Georgia Institute of Technology,
Atlanta, GA 30332}
\email{ghomi@math.gatech.edu}
\urladdr{www.math.gatech.edu/$\sim$ghomi}
\author{Ralph Howard}
\address{Department of Mathematics,
University of South Carolina,
Columbia, SC 29208}
\email{howard@math.sc.edu}
\urladdr{www.math.sc.edu/$\sim$howard}
\date{\today \,(Last Typeset)}
\subjclass[2000]{Primary:  14P05, 32C07; Secondary 53A07, 52A20.}
\keywords{Tangent cone and semicone, real algebraic variety, real analytic set, convex hypersurface, set of positive reach,  cusp singularity.}
\thanks{The research of the first named author was supported in part by NSF Grant DMS-0336455.}
\newcounter{mnotecount}[section]
\begin{document}

\maketitle

\begin{abstract}
We characterize $\C^1$ embedded hypersurfaces of $\R^n$ as the only locally closed sets with continuously varying flat tangent cones whose
measure-theoretic-multiplicity is at most $m<3/2$. It follows then that any (topological) hypersurface which has flat tangent cones and is supported everywhere by balls of uniform radius is $\C^1$. In the real analytic case the same conclusion holds under the weakened hypothesis that each tangent cone  be a hypersurface. In particular, any convex real analytic hypersurface $X\subset\R^n$ is $\C^1$. Furthermore, if $X$ is  real algebraic, strictly convex, and unbounded, then its projective closure is a $\C^1$ hypersurface as well, which shows that $X$ is the graph of a function defined over an entire hyperplane. Finally we show that the last property is a special feature of real algebraic sets, in the sense that it does not hold in the real analytic category.
 \end{abstract}

\section{Introduction}
The \emph{tangent cone} $T_p X$ of a set $X\subset\R^n$ at a  point $p\in X$ consists of the limits of all (secant) rays which originate from $p$ and pass through a sequence of points $p_i\in X\smallsetminus\{p\}$ which converges to $p$. These objects, which are generalizations of tangent spaces to smooth submanifolds,  were first used by Whitney \cite{whitney:tangentsII,whitney:tangents} to study the singularities of real analytic varieties, and also play a fundamental role in  geometric measure theory \cite{federer:book, morgan:book}. 
Here we employ tangent cones to study the differential regularity  of (topological) \emph{hypersurfaces}, i.e., subsets of $\R^n$ which are locally homeomorphic to $\R^{n-1}$, and apply these results in the intersection of real algebraic geometry with convexity theory. In particular we show that there are  unexpected geometric differences between real algebraic and real analytic convex sets. 
Our  initial result  is the following  characterization for  $\C^1$ hypersurfaces, i.e., hypersurfaces which may be represented locally as the graph of continuously differentiable functions. Here a \emph{locally closed} set is the intersection of a closed set with an open set; further,  $T_p X$ is \emph{flat} when it is a hyperplane, and
the  (lower measure-theoretic) \emph{multiplicity} of $T_pX$ is  then just the $(n-1)$-dimensional lower density of $X$ at $p$ (see Section \ref{subsec:multiplicity}). 

\begin{thm}\label{thm:main1}
Let $X\subset\R^n$  be a locally closed set. 
Suppose that $T_pX$ is flat for each $p\in X$, and  depends continuously on $p$.
Then $X$ is a union  of $\C^1$ hypersurfaces. Further, if the multiplicity of each $T_pX$ is at most $m<3/2$, then $X$ is a  hypersurface.
\end{thm}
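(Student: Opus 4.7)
The plan is to work locally near each $p \in X$, showing that a neighborhood of $p$ in $X$ splits into $C^1$ graphs (``sheets'') over the tangent hyperplane $T_pX$, and then to use the multiplicity bound to reduce the number of sheets to one. Fix $p$ and choose coordinates with $T_pX$ horizontal, writing points as $(x',x_n) \in \R^{n-1}\times\R$. The starting point is a pointwise narrow-cone bound coming from the definition of tangent cone: since $T_qX$ is a hyperplane at each $q$, every secant $y-q$ with $y\in X$ sufficiently close to $q$ within $X$ makes arbitrarily small angle with $T_qX$. Continuity of $q\mapsto T_qX$ makes this control uniform in $q \in X\cap B_r(p)$ (though not, in general, uniform in the pair $(q,y)$, since two points of $X$ lying on distinct branches can be Euclidean-close yet topologically far apart within $X$).

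I would then argue that for each $q \in X$ near $p$ and each sufficiently small ball $B$ around $q$, the set $X \cap B$ decomposes into finitely many sheets, each of which projects injectively under the vertical projection $\pi\colon \R^n \to T_pX$ onto an open subset of $T_pX$. The narrow-cone bound (effective on each sheet individually) shows each such projection is a local homeomorphism; the tangent-cone identification of the derivative, together with continuity of $q \mapsto T_qX$, then upgrades each sheet to a $C^1$ graph. Gluing these local graphs consistently across all points of $X$ realizes $X$ as a union of $C^1$ hypersurfaces, which is the first assertion of the theorem.

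For the strengthened conclusion, if $k\geq 2$ distinct sheets were to pass through a common point $p$ (necessarily all tangent to $T_pX$ there), then each would contribute lower density $1$ at $p$, giving an $(n-1)$-dimensional lower density at least $k \geq 2$. This contradicts the hypothesis $m < 3/2$, so at every point of $X$ exactly one sheet is present, and $X$ is globally a single $C^1$ hypersurface.

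The step I expect to be the main obstacle is the clean construction of the sheet decomposition of $X \cap B$: the narrow-cone property alone does not immediately yield sheets, precisely because two sheets can be Euclidean-close while intrinsically far apart within $X$ (as in the elementary picture of two $C^1$ graphs tangent at a single point). One must verify that only finitely many sheets meet a given small ball, that each sheet projects onto an \emph{open} subset of $T_pX$ (which likely requires an application of Brouwer's invariance of domain applied sheet by sheet, or an argument exploiting the fact that $T_qX$ is a \emph{full} hyperplane and so every direction in it is realized as a limit of secants within the sheet), and that sheets can be tracked consistently as one moves along $X$. A subsidiary issue is the identification of the measure-theoretic multiplicity at $p$ with the number of sheets through $p$, which reduces to the standard fact that a $C^1$ hypersurface has lower density exactly $1$ at each of its points, together with additivity of lower densities across disjoint sheets.
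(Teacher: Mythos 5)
Your proposal hinges on a local decomposition of $X\cap B$ into finitely many sheets, each projecting injectively onto an open subset of $T_pX$, and you rightly flag this as the main obstacle; but the gap is not a deferrable technicality — the claim is false as stated, and the paper's proof runs on a different mechanism. Finiteness fails: the paper's Example \ref{ex:tub} exhibits a set with continuously varying flat tangent cones that cannot be split into finitely many hypersurfaces near a point. Moreover the uniform-in-$q$ narrow-cone control you invoke already fails for two sheets merging tangentially (e.g.\ the $x$-axis together with the graph of $e^{-1/x}$, $x>0$: vertical secants of arbitrarily small length occur at points arbitrarily close to the merge point, even though all hypotheses of the theorem's first assertion hold there). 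The paper never constructs sheets. It fixes $p$, shows by a supporting-cylinder (extreme point) argument — using local closedness and that no nearby tangent cone is orthogonal to $T_pX$ — that the projection onto $T_pX$ is an open map on a neighborhood of $p$ in $X$ (Lemma \ref{lem:projection}); it then defines the single function $f(x):=\sup$ of the heights in the fiber over a small ball. Openness of the projection yields continuity of $f$, Lemma \ref{lem:Tgraph} shows the tangent cones of $\graph(f)$ are exactly the non-vertical hyperplanes $T_qX$, and Lemma \ref{lem:C1graph} upgrades this to $\C^1$. This produces one $\C^1$ graph through each point of $X$, which is all the first assertion requires — no finiteness, no sheet-tracking, no invariance-of-domain per sheet.

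The multiplicity step is also genuinely gapped, and the gap sits exactly where the constant $3/2$ lives. You argue that $k\ge 2$ sheets through a common point are disjoint, so lower densities add to give density $\ge 2$. But sheets can coincide on part of their domain and split off tangentially; at such a transition point the density is only $3/2$ (this is the middle configuration of Figure \ref{fig:y}, which is precisely why the constant is sharp), and the failure of the hypersurface property can occur at a point through which only one full sheet passes, a half-sheet merely accumulating there. So additivity over disjoint sheets is unavailable, density $\ge 2$ is simply false in the relevant configurations, and even the weaker bound $\ge 3/2$ at \emph{some} point needs an argument. The paper supplies it by working with both envelopes $f$ (sup) and $g$ (inf), choosing maximal balls $W_i$ about points where $f\neq g$, passing to boundary points $o_i$ of $W_i$ where $f=g$, and estimating the blow-ups at $\ol o_i$: over the interior of $W_i$ the projection is at least $2$-to-$1$, the expansions of $W_i$ fill half of $\B^{n-1}$, and a comparison with the tilted hyperplane $T_{\ol o_i}X$ (the $\cos\theta$, $(1+\epsilon)$, $(1+2\delta)$ bookkeeping, via Lemma \ref{lem:coneconverge}) shows the multiplicity there approaches $3/2$, contradicting $m<3/2$. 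Your proposal contains neither this localization at the boundary of the non-injectivity set nor any substitute for it, so the second assertion is not established by your argument.
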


See \cite{bg} for other  recent results on  characterizing  $\C^1$ submanifolds of $\R^n$ in terms of their  tangent cones, and extensive references in this area, including  works of Gluck \cite{gluck:tan1, gluck:tan2}. Next we develop some applications of Theorem \ref{thm:main1} by looking for natural settings where
the continuity of $p\mapsto T_pX$ follows automatically as soon as $T_p X$ is flat. This is the case for instance when $X$ is a  \emph{convex hypersurface}, i.e., the boundary of a convex set with interior points. 
More generally, the flatness of $T_pX$ imply their continuity whenever $X$ has \emph{positive support} (Lemma \ref{lem:posreach1}), which means that through each point of $X$ there passes a ball $B$ of  radius $r$, for some uniform constant $r>0$, such that the interior of $B$ is disjoint from $X$. If there pass two such balls with disjoint interiors through each point of $X$, then we  say $X$ has \emph{double positive support}. The above theorem together with an observation of H\"{o}rmander (Lemma \ref{lem:hormander}) yield:

\begin{thm}\label{thm:main2}
Let $X\subset\R^n$ be a  locally closed set with flat tangent cones and positive support. Suppose that either $X$ is a hypersurface, or the multiplicity of each $T
_pX$ is at most $m<3/2$. Then $X$ is a $\C^1$ hypersurface; furthermore,  if $X$ has \emph{double} positive support,  then it is $\C^{1,1}$.
\end{thm}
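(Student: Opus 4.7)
The plan is a two-stage bootstrap. First I would use Lemma~\ref{lem:posreach1} to upgrade the hypotheses to continuity of $p\mapsto T_pX$, so that Theorem~\ref{thm:main1} applies and delivers the $\C^1$ conclusion. Then, assuming double positive support, I would exploit the two-sided tangent balls to impose a uniform quadratic sandwich on the local graphing function, and convert it into a Lipschitz bound on the gradient.

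For the $\C^1$ claim: since $X$ has flat tangent cones and positive support, Lemma~\ref{lem:posreach1} forces $p\mapsto T_pX$ to be continuous, so the hypotheses of Theorem~\ref{thm:main1} are met. Thus $X$ is a union of $\C^1$ hypersurfaces; in the branch where the multiplicity is at most $m<3/2$, the same theorem further guarantees that $X$ is itself a topological hypersurface. So in either branch $X$ is a topological hypersurface locally covered by $\C^1$ hypersurfaces, and invariance of domain shows this cover must reduce, around each point $p\in X$, to a single $\C^1$ sheet: two distinct $\C^1$ hypersurfaces meeting at $p$ would produce a non-manifold point and violate the topological hypersurface structure. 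Hence $X$ is a $\C^1$ hypersurface.

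For the $\C^{1,1}$ upgrade, assume now double positive support with uniform radius $r>0$. By the $\C^1$ conclusion, for any $p\in X$ I can choose coordinates so that $T_pX=\R^{n-1}\times\{0\}$ and a neighborhood of $p$ in $X$ is the graph of a $\C^1$ function $f$ with $\grad f(p)=0$. The two supporting balls of radius $r$ on opposite sides of $T_pX$ force the two-sided envelope
\[
-\bigl(r-\sqrt{r^2-|x-p|^2}\bigr)\;\le\; f(x)-f(p)\;\le\; r-\sqrt{r^2-|x-p|^2}
\]
on a neighborhood of $p$. Applying the same estimate around every nearby base point $q\in X$ with respect to its own tangent hyperplane $T_qX$, and using the uniformity of $r$ to align the envelopes centered at $p$ and $q$ (a Blaschke-style rolling computation), I expect to obtain $|\grad f(q)-\grad f(p)|\le C|q-p|$, so that $\grad f$ is locally Lipschitz and $X$ is $\C^{1,1}$. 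The main obstacle is this last step: passing from pointwise quadratic sandwiches to a genuine Lipschitz bound on the gradient requires coordinating the envelopes centered at two nearby points, and it is precisely the uniformity of the support radius together with the two-sidedness (so that the envelope pinches $f$ from above and below simultaneously) that makes the two estimates compatible and produces the linear rate.
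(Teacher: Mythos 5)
Your $\C^1$ half is essentially the paper's own argument: Lemma \ref{lem:posreach1} upgrades flat tangent cones plus positive support to continuity of $p\mapsto T_pX$, and Theorem \ref{thm:main1} then applies. In the branch where $X$ is assumed to be a topological hypersurface, the passage from ``union of $\C^1$ hypersurfaces'' to ``$\C^1$ hypersurface'' is indeed the invariance-of-domain observation you make (the paper leaves this step implicit); just phrase it as openness -- the $\C^1$ sheet produced in the proof of Theorem \ref{thm:main1} is contained in $X$, and an $(n-1)$-manifold inside the $(n-1)$-manifold $X$ is open in $X$, so $X$ locally coincides with that sheet -- rather than as ``two sheets meeting at $p$ would create a non-manifold point,'' which is not literally true (two sheets can meet tangentially, or overlap, without destroying manifoldness).

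The $\C^{1,1}$ half is where you genuinely diverge from the paper, and as written it has a real gap at precisely the step you flag: you never derive the Lipschitz bound on $\grad f$ from the family of pointwise quadratic sandwiches, you only say you ``expect'' the envelopes at nearby base points to be compatible. That coordination is the entire analytic content of the claim, so the proposal currently asserts the conclusion rather than proving it. The paper avoids this computation altogether by an inversion trick (Lemma \ref{lem:inversion}): invert $X$ through the boundary sphere of one of the support balls at $p$; since the balls on the two sides have uniformly large radii and disjoint interiors, a neighborhood of $i(p)$ in $i(X)$ lies on the boundary of a convex body in which a ball of uniform radius slides freely, so H\"ormander's lemma (Lemma \ref{lem:hormander}) gives $\C^{1,1}$ there, and pulling back by the inversion, a local diffeomorphism, preserves $\C^{1,1}$. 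Your direct route can be completed, but the missing step must actually be carried out: for instance, the two balls at a point $q$ give the two-sided reach-type inequality $|\langle x-q,\,n(q)\rangle|\le \|x-q\|^2/(2r)$ for all nearby $x\in X$, and in a fixed graph chart where $|\grad f|\le 1/2$ (available since $X$ is already known to be $\C^1$) these tilted spherical caps yield touching paraboloids of uniform opening above and below the graph at every point, i.e.\ uniform semiconvexity and semiconcavity of $f$, which for a $\C^1$ function forces $\grad f$ to be Lipschitz. Until an argument of this kind is written out, the $\C^{1,1}$ statement remains unproved in your version, whereas the paper's inversion-plus-H\"ormander route settles it with no quantitative estimate at all.
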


Here $\C^{1,1}$ means that $X$ may be represented locally as the graph of a function with \emph{Lipschitz} continuous derivative \cite[p. 97]{hormander}.
Examples of sets with positive support include the boundary of sets of \emph{positive reach} introduced by Federer \cite{federer:curvature,thale}:
A closed set $X\subset\R^n$ has positive reach if there exists a neighborhood  of it in $\R^n$ of radius $r>0$ such that  every point in that neighborhood is closest to a \emph{unique} point in $X$. For instance all convex sets have positive reach. Thus the  above theorem  generalizes the well-known fact that convex hypersurfaces with unique support hyperplanes; or, equivalently, convex functions with unique subdifferentials, are $\C^1$ (Lemma \ref{lem:convexity}); later we will offer another generalization as well (Theorem \ref{thm:posreach}). Also note  that any hypersurface with positive reach  automatically has double positive support, which we may think of  as having a ball ``roll freely" on both sides of $X$. This is important for $\C^{1,1}$ regularity 
as there are hypersurfaces with flat tangent cones and positive support which are not $\C^{1,1}$ (Example \ref{ex:deltoids}).
Finally we should mention that the last claim in Theorem \ref{thm:main2} is closely related to a general result of Lytchak \cite{lytchak}, see Note \ref{note:lytchak}.
Next we show that there are settings where the assumption in the above results that $T_pX$ be flat may be weakened. A set $X\subset\R^n$ is \emph{real analytic} if locally it coincides with the zero set of an analytic function. Using a version of the real nullstellensatz, we will show (Corollary \ref{cor:thmprop}) that if any tangent cone $T_pX$ of a real analytic hypersurface is  itself a hypersurface, then it is symmetric with respect to $p$.  Consequently Theorem \ref{thm:main2} yields: 

\begin{thm}\label{thm:main3}
Let $X\subset\R^n$ be a real analytic  hypersurface with positive support. If $T_p X$ is a hypersurface for all $p$ in $X$, then $X$ is $\C^1$. In particular, convex real analytic hypersurfaces are $\C^1$.
\end{thm}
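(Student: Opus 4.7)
The plan is to reduce to Theorem \ref{thm:main2}. Since $X$ already has positive support and is a hypersurface, it suffices to upgrade the hypothesis ``$T_pX$ is a hypersurface'' to ``$T_pX$ is flat''; Theorem \ref{thm:main2} then yields the $\C^1$ conclusion. The crucial extra ingredient is the symmetry principle of Corollary \ref{cor:thmprop}, which in the real analytic setting tells us that each $T_pX$ (being a hypersurface) is invariant under reflection through its apex $p$.

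First I would observe that positive support confines $T_pX$ to a closed half-space. If $B$ is a ball of radius $r$ with $p\in \partial B$ and $B^o\cap X=\emptyset$, and $c$ denotes its center, then any $y\in X\setminus\{p\}$ satisfies $\|y-c\|\geq r=\|p-c\|$, which rearranges to $\|y-p\|^2\geq 2\langle y-p,\,c-p\rangle$. Dividing by $\|y-p\|$ and letting $y\to p$ along a sequence with $(y-p)/\|y-p\|\to\hat v$, every unit direction $\hat v$ of $T_pX$ satisfies $\langle \hat v,\,c-p\rangle\leq 0$. Symmetry upgrades this to an equality: since $-\hat v$ is also a direction of $T_pX$, we get $\langle \hat v,\,c-p\rangle=0$, so $T_pX$ lies in the hyperplane $H$ through $p$ orthogonal to $c-p$. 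But $T_pX$ is a topological $(n-1)$-manifold sitting inside the $(n-1)$-dimensional affine space $H$, hence open in $H$ by invariance of domain; being closed in $H$ as well (tangent cones are closed) and $H$ being connected, $T_pX=H$. Thus every $T_pX$ is flat, and Theorem \ref{thm:main2} completes the argument.

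For the ``in particular'' statement, I would verify that convex real analytic hypersurfaces satisfy the hypotheses automatically. Write $X=\partial K$ for a convex body $K$. At each $p\in X$ a supporting hyperplane exists, and the half-space opposite $K$ admits balls of arbitrary radius tangent at $p$ whose interiors miss $K\supset X$; hence $X$ has positive support (for every $r>0$). Moreover, $T_pX=\partial T_pK$, and since $K$ has nonempty interior while $p\notin\inte K$, the convex cone $T_pK$ is full-dimensional and proper, so its boundary is a topological hypersurface (homeomorphic to $\R^{n-1}$ via radial projection from an interior direction). The main obstacle is really the symmetry input from Corollary \ref{cor:thmprop}, whose work via the real nullstellensatz has already been carried out elsewhere in the paper; the remaining synthesis is the clean half-space--symmetry--invariance-of-domain dichotomy described above.
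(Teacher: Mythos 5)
Your proposal is correct and follows essentially the same route as the paper: positive support confines $T_pX$ to a half-space, the symmetry from Corollary \ref{cor:thmprop} pushes it into the bounding hyperplane, and invariance of domain plus closedness of tangent cones gives $T_pX=H$, after which the paper invokes Lemma \ref{lem:posreach1} and Theorem \ref{thm:main1} (which is exactly what your appeal to Theorem \ref{thm:main2} amounts to). The treatment of the convex case likewise matches the paper's use of Lemma \ref{lem:convexsurface}.
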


Note that by a hypersurface in this paper we  always mean a set which is locally homeomorphic to $\R^{n-1}$.
The last result is optimal in several respects. First, the assumption that each $T_p X$ be a hypersurface is necessary due to the existence of cusp  type singularities which arise for instance in the planar curve $y^2=x^3$. Further, there are real analytic hypersurfaces with flat tangent cones which are supported at each point by a ball of (nonuniform) positive radius, but are not $\C^1$ (Example \ref{ex:analytic}). In particular, the assumption on the existence of positive support is necessary; although the case $n=2$ is an exception (Section \ref{subsec:cusp}). Furthermore, there are real analytic convex hypersurfaces which are not $\C^{1,1}$, or even $\C^{1,\alpha}$, and thus the $\C^1$ conclusion in the above theorem is  sharp (Example \ref{ex:deltoids}). Finally there are convex \emph{semi}-analytic hypersurfaces which are not $\C^1$ (Example \ref{ex:dingdong}).
Next  we  show that the  conclusions in the above theorem may be strengthened when $X$ is \emph{real algebraic}, i.e., it is the zero set of a polynomial function. We say that  $X\subset\R^n$ is an \emph{entire graph} if there exists a line $\ell\subset\R^n$ such that $X$ intersects every line parallel to $\ell$ at exactly one point. In other words, after a rigid motion, $X$ coincides with the graph of a function $f\colon\R^{n-1}\to\R$. The \emph{projective closure} of $X$ is its closure as a subset of $\RP^n$ via the \emph{standard embedding} of $\R^n$ into $\RP^n$ (see Section \ref{subsec:projtranform}). Also recall  that a convex hypersurface is \emph{strictly convex} provided that it does not contain any line segments.

\begin{thm}\label{thm:main4}
Let $X\subset\R^n$ be a real algebraic convex hypersurface homeomorphic to $\R^{n-1}$. Then $X$ is an entire graph. Furthermore, if $X$ is strictly convex, then its projective closure is a $\C^1$ hypersurface.
\end{thm}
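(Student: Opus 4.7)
The plan is to treat the two assertions separately.

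\textbf{Part 1 (Entire graph).} Let $K$ be the convex body with $\partial K = X$. Since $X\cong\R^{n-1}$, $K$ is unbounded and $\rc(K)$ is nontrivial. I first show $\rc(K)$ is not a linear subspace: otherwise $K$ would split as $K=\rc(K)+K_0$ with $K_0$ a compact convex set of complementary dimension, so $\partial K\cong\rc(K)\times\partial K_0$ would be a product involving a nontrivial sphere---incompatible with $X\cong\R^{n-1}$. Hence one can choose $v\in\relint(\rc(K))$ outside the lineality space $L=\rc(K)\cap(-\rc(K))$. For this $v$, each line parallel to $v$ meets $K$ in at most a half-line, hence meets $X$ in at most one point, giving injectivity of $\pi_v\cn X\to v^\perp$. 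Invariance of domain makes $\pi_v(X)=\pi_v(K)$ open and convex in $v^\perp\cong\R^{n-1}$; and since $v\in\relint(\rc(K))$, for any $q\in v^\perp$ and any $y_0\in K$ in the appropriate affine fiber, the direction $(q-y_0)/T+v$ lies in $\rc(K)$ for $T$ large, so $y_0+T\bigl((q-y_0)/T+v\bigr)=q+Tv\in K$, yielding $\pi_v(K)=v^\perp$. Consequently $X$ is the graph of a function over $v^\perp$.

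\textbf{Part 2 (Projective closure is $\C^1$).} Assume $X$ is strictly convex. By Part 1, after a rigid motion we may write $X=\{x_n=f(x_1,\ldots,x_{n-1})\}$ for a strictly convex polynomial $f$. Strict convexity forbids $f$ from being affine along any line, so for every $v\neq 0$ the polynomial $t\mapsto f(tv)$ is a nonconstant convex polynomial of degree $\geq 2$ in $t$, whence $f(tv)\to+\infty$ super-linearly. A direct computation then shows $\rc(K)=\R_{\geq 0}\, e_n$, so $\overline X\cap H_\infty$ consists of the single point $p=[0\,{:}\,\cdots\,{:}\,0\,{:}\,1\,{:}\,0]$.

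At any finite point $q\in X$, $\overline X$ locally equals $X$, a convex real analytic hypersurface, so Theorem~\ref{thm:main3} yields $\C^1$ regularity at $q$. To handle $p$, pass to the affine chart of $\RP^n$ around $p$ with coordinates $y_i=x_i/x_n$ (for $i<n$) and $s=1/x_n$. Homogenizing $x_n=f(x)$ to $X_nX_0^{d-1}=\sum_k f_k(X_1,\ldots,X_{n-1})X_0^{d-k}$ (where $d=\deg f$ and $f_k$ is the degree-$k$ homogeneous part of $f$) and dehomogenizing in the new chart yields
\[
s^{d-1} \;=\; f_d(y)+s\,f_{d-1}(y)+s^2 f_{d-2}(y)+\cdots+s^d f_0 .
\]
The closure of $X$ in this chart is the $s\geq 0$ branch of this real algebraic set near the origin; using that $\rc(K)=\R_{\geq 0}\,e_n$ (to exclude extraneous branches) together with the implicit function theorem off the origin and a Newton--Puiseux expansion at the origin, one shows this branch is the graph of a continuous function $s=\phi(y)$, with $\phi(0)=0$, real analytic for $y\neq 0$, and convex near the origin---its leading asymptotic behavior being governed by $f_d(y)^{1/(d-1)}$, a convex function homogeneous of degree $d/(d-1)>1$. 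Hence $\overline X$ is locally at $p$ a convex real analytic hypersurface, and Theorem~\ref{thm:main3} applies to conclude $\overline X$ is $\C^1$ at $p$.

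The chief obstacles I anticipate are: (i) in Part 1, rigorously verifying the surjectivity step when $\rc(K)$ has a nontrivial lineality space, which amounts to showing that the projection of $K$ onto $\aff(\rc(K))^\perp$ is surjective---a condition that should follow from $X\cong\R^{n-1}$ but requires a separate convex-geometric argument using the algebraicity of $X$; and (ii) in Part 2, proving continuity and convexity of $\phi$ near the origin when $f_d$ has nontrivial zero directions (e.g.\ $f(y_1,y_2)=y_1^2+y_2^4$ with $f_d=y_2^4$ vanishing on $y_2=0$), where one must combine the lower-order terms of $f$ with the leading form and use the strict convexity of $f$ to ensure $\phi$ is single-valued, continuous, and convex.
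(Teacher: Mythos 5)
Your Part 1 has a genuine gap at exactly the crux of the theorem: the surjectivity of the projection. The step ``the direction $(q-y_0)/T+v$ lies in $\rc(K)$ for $T$ large'' is only valid when $v$ is an \emph{interior} point of $\rc(K)$ in $\R^n$, i.e.\ when $\rc(K)$ is full-dimensional; but for a typical example such as a paraboloid, $\rc(K)$ is a single ray, its relative interior gives no room to absorb the perturbation $(q-y_0)/T$, and the argument collapses. More fundamentally, no purely convex-geometric argument of this kind can work, because the entire-graph statement is false without algebraicity: the real analytic convex curve $x^2+e^{-y}=1$ (Example \ref{ex:horseshoe}) and the semialgebraic strictly convex curve $y(1-x^2)=1$ (Example \ref{ex:dingdong}) are homeomorphic to $\R$, have $v=e_2\in\relint(\rc(K))$, and are \emph{not} entire graphs. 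You flag this as an ``obstacle,'' but it is the whole content of the first assertion. The paper's proof makes algebraicity do the work: using Wu's lemma to find $u\in\rc(K)\cap\S^{n-1}\cap(-\nc(K))$, it applies the projective transformation $P(x)=(x_1/x_n,\dots,x_{n-1}/x_n,1/x_n)$ to $K\subset\{x_n\ge 1\}$; the boundary of $\ol{P(K)}$ is again algebraic (zero set of $x_n^d f(P(x))$), hence a convex real analytic hypersurface, hence $\C^1$ by Theorem \ref{thm:main3}; the $\C^1$ tangent plane at the image $o$ of the point at infinity must be $\{x_n=0\}$, so every ray from $o$ into the open half-space meets $P(\partial K)$, and pulling back by the involution $P$ gives that every line parallel to $u$ meets $\partial K$.

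Part 2 starts from the unjustified (and in general false) claim that $X$ is the graph of a \emph{polynomial} $f$: the strictly convex real algebraic curves $y^{2n-1}=x^{2n}$ of Example \ref{ex:deltoids} are entire graphs of $x\mapsto x^{2n/(2n-1)}$, which is not a polynomial. Consequently the homogenization $X_nX_0^{d-1}=\sum_k f_k X_0^{d-k}$, the degree bookkeeping, and the ``superlinear growth in every direction, hence a single point at infinity'' deduction all break down; note also that strict convexity alone never forces superlinear growth (consider $\sqrt{1+x^2}$), so this conclusion genuinely needs the algebraic hypothesis used in some other way. The remaining claims near $p$ (single-valuedness, continuity and convexity of $\phi$, exclusion of extra Newton--Puiseux branches) are precisely the points you list as unresolved obstacles, so the $\C^1$ regularity at infinity is not established. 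For comparison, the paper avoids all of this: strict convexity makes $\ol{P(K)}$ touch $\{x_n=1\}$ in one point, so $\ol{P(K)}$ is compact by Lemma \ref{lem:rc}; a compact convex set contained in an algebraic hypersurface contains no segments, so $\ol{P(K)}\cap\{x_n=0\}$ is a single point, whence $\partial\ol{P(K)}=\ol{P(\partial K)}$ is a compact $\C^1$ hypersurface (again by Theorem \ref{thm:main3}), and the projective closure of $X$ is its image under a projective diffeomorphism of $\RP^n$. Your overall instinct---work in a chart at infinity and invoke Theorem \ref{thm:main3} there---matches the paper, but convexity of the transformed set and the algebraic no-segment trick are the tools that make it rigorous, and both are missing from your write-up.
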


Thus, a real algebraic strictly convex hypersurface is not only $\C^1$ in $\R^n$ but also at  ``infinity".
Interestingly, this phenomenon does not in general hold when $X$ is the zero set of an analytic function (Example \ref{ex:horseshoe}). 
So Theorem \ref{thm:main4} demonstrates that there are genuine geometric differences between the categories of real analytic and real algebraic convex hypersurfaces, as we had mentioned earlier. 
Further, the above theorem does not hold when $X$ is \emph{semi}-algebraic  (Example \ref{ex:dingdong}),
and the  convexity assumption here is necessary, as far as the entire-graph-property is concerned (Example \ref{ex:teardrop}). Finally, the regularity of the projective closure does not hold without the strictness of convexity (Example \ref{ex:pcl}).  
See \cite{ranestad&sturmfels,sss,sturmfels&uhler} for some examples of convex hypersurfaces arising in algebraic geometry.
\emph{Negatively curved} real algebraic hypersurfaces have also been studied  in \cite{connell&ghomi}.
Finally, some recent  results on geometry and regularity of locally convex hypersurfaces may  be found in \cite{alexander&ghomi:chp,alexander&ghomi:chpII, agw,ghomi:stconvex}.

\section{Preliminaries: Basics of Tangent Cones and Their \\
Measure Theoretic Multiplicity}\label{sec:prelim}
\subsection{}\label{subsec:prelim1}
Throughout this paper $\R^n$ denotes the $n$-dimensional Euclidean space with origin $o$, standard inner product $\langle \,,\,\rangle$ and norm $\|\cdot\|:=\langle\cdot,\cdot\rangle^{1/2}$. The unit ball $\B^n$ and sphere $\S^{n-1}$ consist respectively of points $x$ in $\R^n$ with $\|x\|\leq 1$ and $\|x\|=1$. For any sets $X$, $Y\subset\R^n$, $X+Y$ denotes their Minkowski sum or the collection of all $x+y$ where $x\in X$ and $y\in Y$. Further for any constant $\lambda\in\R$, $\lambda X$ is the set of all $\lambda x$ where $x\in X$, and we set $-X:=(-1)X$.
For any point $p\in \R$, and $r>0$, $B^n(p,r):=p+r\B^n$ denotes the (closed) \emph{ball} of radius $r$ centered at $p$. 
By a \emph{ray} $\ell$ with origin $p$ in $\R^n$ we mean the set of points given by $p+tu$, where $u\in\S^{n-1}$ and $t\geq 0$. We call $u$ the \emph{direction} of $\ell$, or $\dir(\ell)$. The set of all rays in $\R^n$ originating from $p$ is denoted by $\Rays(p)$. This space may be topologized by defining the distance between a pair of rays $\ell$, $\ell'\in \Rays(p)$ as 
$$
\dist(\ell,\ell'):=\|\dir(\ell)-\dir(\ell')\|.
$$ 
Now for any set $X\subset\R^n$, and $p\in X$, we may define the \emph{tangent cone} $T_p X$ as the union of all rays $\ell\in \Rays(p)$ which meet the following criterion: there exists a sequence of points $p_i\in X\smallsetminus\{p\}$ converging to $p$ such that the corresponding sequence of rays $\ell_i\in\Rays(p)$ which pass through $p_i$  converges to $\ell$. In particular note that $T_p X=\emptyset$ if and only if $p$ is an isolated point of $X$.

We should point out that our definition of tangent cones coincides with  Whitney's \cite[p. 211] {whitney:tangentsII}, and is consistent with the usage of the term in geometric measure theory  \cite[p. 233]{federer:book}; however, in algebraic geometry,  the tangent cones are usually defined as the limits of secant \emph{lines} as opposed to \emph{rays} \cite{shafarevich,OW}. We refer to the latter notion as the  \emph{symmetric} tangent cone. More explicitly, let $(T_p X)^*$ denote the reflection of $T_p X$ with respect to $p$, i.e., the set of  all rays $\ell\in \Rays(p)$ with $\dir(\ell)=-u$ where $u$ is the direction of a ray in $T_p X$; then  the \emph{symmetric tangent cone} of $X$ at $p$ is defined as:
$$
\tilde T_p X:=T_pX\cup (T_p X)^*.
$$
 Still a third usage of  the term tangent cone in the literature,  which should not be confused with the two notions we have already mentioned, corresponds to the zero set of  the leading terms of the Taylor series of analytic functions $f$ when $X=f^{-1}(0)$, and may be called the \emph{algebraic tangent cone}. The relation between these three notions of tangent cones, which  will be studied in Section \ref{sec:tangents}, is the key to proving Theorem \ref{thm:main3}.

Next we will record a pair of basic properties of tangent cones.
For any $\ell\in \Rays(p)$ and $\delta>0$, let $C(\ell,\delta)$ be a neighborhood of $\ell$ of radius $\delta$ in $\Rays(p)$, i.e., 
the cone given by
$$
C(\ell,\delta):=\big\{\,\ell'\in \Rays(p)\mid \|\dir(\ell)-\dir(\ell')\|\leq\delta\,\big\}
$$ 
 The following observation is a quick consequence of the definitions which will be used frequently throughout this work:

\begin{lem}\label{lem:cone}
Let $X\subset\R^n$, $p\in X$, and $\ell\in \Rays(p)$.  Then $\ell\subset T_pX$, if, and only if, for every $\delta$, $r>0$,  
\begin{equation*}\big(X\smallsetminus\{p\}\big)\cap C(\ell,\delta)\cap B^n(p,r)\neq\emptyset.\end{equation*}
In particular, if every open neighborhood of $p$ in $X$ intersects $\ell\smallsetminus\{p\}$, then $\ell\subset T_p X$. Furthermore, $T_p X$ is always a closed set.
\qed
\end{lem}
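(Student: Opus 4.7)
The statement splits into three parts: the main equivalence, the ``in particular" consequence, and the closedness of $T_p X$. All three should follow quickly from unraveling the definitions, so I expect no real obstacle; the plan is to organize the bookkeeping cleanly.

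For the main equivalence, the forward direction is immediate from the definition of $T_p X$. If $\ell \subset T_p X$, pick a sequence $p_i \in X \smallsetminus \{p\}$ with $p_i \to p$ whose secant rays $\ell_i \in \Rays(p)$ through $p_i$ satisfy $\ell_i \to \ell$, i.e., $\|\dir(\ell_i) - \dir(\ell)\| \to 0$. For any fixed $\delta, r > 0$ and all sufficiently large $i$, we have $p_i \in B^n(p,r)$ and $\ell_i \in C(\ell,\delta)$, so $p_i$ lies in the desired intersection. Conversely, if the intersection is nonempty for every $\delta, r > 0$, then for each positive integer $i$ I can pick $p_i \in (X \smallsetminus \{p\}) \cap C(\ell, 1/i) \cap B^n(p, 1/i)$. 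By construction $p_i \to p$, and the secant ray $\ell_i$ through $p_i$, which has direction $(p_i - p)/\|p_i - p\|$, lies in $C(\ell, 1/i)$, so $\ell_i \to \ell$. Hence $\ell \subset T_p X$.

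The ``in particular" assertion is an immediate consequence: since $\ell \subset C(\ell, \delta)$ for every $\delta > 0$, any point $q \in \ell \smallsetminus \{p\}$ lying in the neighborhood $X \cap B^n(p,r)$ witnesses nonemptiness of the intersection, so the criterion applies.

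For closedness of $T_p X$ as a subset of $\R^n$, I plan to use the equivalence just established. Let $x_i \in T_p X$ with $x_i \to x$; I want $x \in T_p X$. The case $x = p$ is trivial since $p$ belongs to every ray (assuming $T_p X$ is nonempty). If $x \neq p$, then for large $i$ also $x_i \neq p$, and each $x_i$ lies on the unique ray $\ell_{i} \in \Rays(p)$ through $x_i$, with $\ell_i \subset T_p X$. Denoting by $\ell$ the ray through $p$ with direction $(x - p)/\|x - p\|$, continuity of the normalization gives $\ell_i \to \ell$. It now suffices to show $\ell \subset T_p X$. Given $\delta, r > 0$, for $i$ large enough $\ell_i \in C(\ell, \delta/2)$, and applying the forward direction of the main equivalence to $\ell_i \subset T_p X$ yields points of $X \smallsetminus \{p\}$ in $C(\ell_i, \delta/2) \cap B^n(p,r)$; by the triangle inequality on directions these lie in $C(\ell, \delta) \cap B^n(p,r)$. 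The reverse direction of the equivalence then gives $\ell \subset T_p X$, so $x \in T_p X$ as needed.
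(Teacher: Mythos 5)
Your proof is correct; the paper states this lemma with no written proof, presenting it as an immediate consequence of the definitions, and your argument is exactly the routine unwinding it has in mind: translating the secant-ray definition of $T_pX$ into the $\delta$, $r$ criterion (using that a point of the cone $C(\ell,\delta)$ other than $p$ lies on a unique ray from $p$, whose direction is within $\delta$ of $\dir(\ell)$), and then reusing that criterion together with a triangle inequality on directions to get closedness. Nothing is missing.
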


Another useful way to think of $T_pX$ is as a certain limit of homotheties of $X$, which we describe next.
For any $\lambda>0$, let  
$$
X_{p,\lambda}:=\lambda (X-p)+p
$$
be the \emph{homothetic expansion} of $X$ by the factor $\lambda$ centered at $p$.
Then we have
\begin{lem}\label{lem:coneconverge}
Let $X\subset \R^n$, $p\in X$, and $r>0$. Then for any $\epsilon>0$ there exists $\lambda>0$ such that for every $\lambda'\geq\lambda$,
\begin{equation}\label{eq:coneconverg1}
X_{p,\lambda'}\cap B^n(p,r) \subset (T_pX+\epsilon \B^n)\cap B^n(p,r).
\end{equation}
\end{lem}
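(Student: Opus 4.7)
The plan is to argue by contradiction. Suppose the inclusion fails for some $\epsilon > 0$. Then we may extract a sequence $\lambda_i \to \infty$ and points $q_i \in X_{p,\lambda_i} \cap B^n(p,r)$ with $\dist(q_i, T_pX) > \epsilon$, where we adopt the convention $\dist(\cdot, \emptyset) = \infty$. By definition of the homothetic expansion, $q_i = p + \lambda_i(p_i - p)$ for some $p_i \in X$, and the bound $\|q_i - p\| \leq r$ gives $\|p_i - p\| \leq r/\lambda_i$, so in particular $p_i \to p$. If $p_i = p$ for infinitely many $i$, then the corresponding $q_i$ equal $p$, which lies in $T_pX$ whenever $T_pX$ is nonempty (every ray from $p$ contains $p$); since $\dist(q_i, T_pX) > \epsilon > 0$ we would need $T_pX = \emptyset$, but then $p$ is isolated in $X$ and for $\lambda_i$ large enough $X_{p,\lambda_i} \cap B^n(p,r) = \{p\}$, which still puts $q_i = p$, a contradiction. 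So, passing to a subsequence, we may assume $p_i \neq p$ for all $i$.

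Let $u_i := (p_i - p)/\|p_i - p\| \in \S^{n-1}$ be the direction of the secant ray $\ell_i \in \Rays(p)$ through $p_i$. By compactness of $\S^{n-1}$, a subsequence has $u_i \to u$ for some $u \in \S^{n-1}$; the corresponding rays $\ell_i$ converge to the ray $\ell \in \Rays(p)$ with direction $u$, and by the very definition of the tangent cone this gives $\ell \subset T_pX$. Now rewrite
$$
q_i = p + \lambda_i \|p_i - p\|\, u_i = p + t_i u_i, \qquad t_i := \lambda_i \|p_i - p\| \in [0, r].
$$
Passing to a further subsequence so that $t_i \to t \in [0, r]$, we get $q_i \to p + tu$, which lies on $\ell$ and hence in $T_pX$. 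Therefore $\dist(q_i, T_pX) \to 0$, contradicting $\dist(q_i, T_pX) > \epsilon$.

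The only step that requires care is the handling of the degenerate situation in which $p_i = p$ or $T_pX$ is empty, since the statement of the lemma makes no exception for isolated points; this is dispatched by the initial case analysis. The remaining argument is essentially a compactness/diagonal extraction, exploiting that both the direction $u_i$ and the rescaled distance $t_i$ take values in compact sets, so no subtle uniformity is needed.
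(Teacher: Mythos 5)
Your main argument is correct and takes essentially the same route as the paper: both proofs argue by contradiction, pick $\lambda_i\to\infty$ and points of $X_{p,\lambda_i}\cap B^n(p,r)$ at distance more than $\epsilon$ from $T_pX$, and use compactness to produce a limit point lying on a limit of secant rays through $p_i\to p$, hence in $T_pX$; you merely split the compactness into the direction $u_i$ and the scale $t_i$, where the paper takes a limit point of the bounded sequence $\lambda_i x_i$ in the ball directly. The one flaw is your treatment of the degenerate case: when $T_pX=\emptyset$ there is no contradiction to be had, since then $T_pX+\epsilon\B^n=\emptyset$ while $p$ always belongs to $X_{p,\lambda'}\cap B^n(p,r)$, so the stated inclusion literally fails for an isolated point $p$; claiming ``$q_i=p$, a contradiction'' does not dispatch this, because $\dist(p,\emptyset)>\epsilon$ is perfectly consistent. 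This is an edge case the paper's own proof also glosses over (its ray through the limit point $\ol x$ is only well defined because, for nonisolated $p$, one has $\ol x\neq p$), and the correct handling is simply to note that the lemma is vacuous or must be read for nonisolated $p$, rather than to assert a refutation; with that adjustment your proof is complete and matches the paper's.
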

\begin{proof}
We may assume, for convenience, that $p=o$ and  $r=1$. Set $B(T_oX,\epsilon):=T_oX+\epsilon \B^n$, and $X_\lambda:=X_{o,\lambda}$. Let $\epsilon<1$ be given and suppose, towards a contradiction, that 
there is a sequence $\lambda_i\to\infty$ such that for each $\lambda_i$, there is a point
$\lambda_i x_i\in X_{\lambda_i}\cap\B^n$ which does not belong to $B(T_oX,\epsilon)$. Since $\B^n$ is compact, $\lambda_i x_i$ has a limit point $\ol x$ in $\B^n$, which will be disjoint from the interior of $B(T_oX,\epsilon)$. Thus the ray $\ol\ell\in\Rays(o)$ which passes through $\ol x$ does not belong to $T_o X$. On the other hand $\ol\ell$ is a limit of rays $\ell_i$ which originate from $o$ and pass through $x_i$. Further note that since $\|\lambda_ix_i\|\leq 1$ and $\lambda_i\to\infty$, we have $x_i\to o$. Thus $\ol\ell$ must belong to $T_o X$ and we have a contradiction.
\end{proof}

It is now natural to wonder whether $\lambda$ in Lemma \ref{lem:coneconverge} may be found so that the following inclusion, also holds for all $\lambda'\geq\lambda$:
\begin{equation}\label{eq:coneconverg2}
(X_{p,\lambda'}+\epsilon \B^n)\cap B^n(p,r) \subset T_pX\cap B^n(p,r).
\end{equation}
For if \eqref{eq:coneconverg1} and \eqref{eq:coneconverg2} were both true, then it would mean that $X_{p,\lambda'}$ converges to $T_p X$ with respect to the \emph{Hausdorff distance} \cite{rw} inside any ball centered at $p$. In fact when $X$ is real analytic, this is precisely the case \cite[p. 220]{whitney:tangentsII}. This phenomenon, however, does not in general hold: consider for instance the set $X\subset\R$ which consists of the origin $o$ and  points
$\pm 1/2^n$, 
$n=1,2,3,\dots$ (then $T_oX=\R$, while $X_{o,2^n}\cap\B^1=X$).
Still, there does exist a natural notion of convergence with respect to which $T_p X$ is the limit of the homothetic expansions $X_{p,\lambda}$: A set $X\subset\R^n$ is said to be the \emph{outer limit} \cite{rw} of a sequence of sets $X_i\subset\R^n$, and we write $\limsup_{i\to\infty} X_i=X$, provided that for every $x\in X$ there exists a subsequence $X_{i_j}$ which eventually intersects every open neighborhood of $x$. 
\begin{cor}\label{cor:outer}
For any set $X\subset\R^n$ and point $p\in X$, the tangent cone $T_p X$ is the outer limit of the homothetic expansions of $X$ centered at $p$:
$$T_p X=\limsup_{\lambda\to\infty} X_{p,\lambda}.$$
\end{cor}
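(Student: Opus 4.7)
The plan is to establish the two inclusions $T_p X \subset \limsup_{\lambda \to \infty} X_{p,\lambda}$ and $\limsup_{\lambda \to \infty} X_{p,\lambda} \subset T_p X$ separately. The reverse inclusion will come almost for free from Lemma \ref{lem:coneconverge}, while the forward inclusion requires producing an explicit sequence of rescalings whose image realizes a given ray point.

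For the forward inclusion, fix $x \in T_p X$ and write $x = p + tu$ with $u = \dir(\ell)$, $t \geq 0$, where $\ell$ is a ray in $T_p X$ that contains $x$. The case $x = p$ is immediate since $p \in X_{p,\lambda}$ for every $\lambda>0$. When $t>0$, the definition of the tangent cone provides a sequence $p_i \in X \smallsetminus \{p\}$ with $p_i \to p$ such that the rays $\ell_i \in \Rays(p)$ through $p_i$ converge to $\ell$. Set $\lambda_i := t/\|p_i - p\|$, so that $\lambda_i \to \infty$, and define $q_i := \lambda_i(p_i - p) + p$. Then $q_i \in X_{p,\lambda_i}$ and $q_i = p + t\,\dir(\ell_i)$, which converges to $p + tu = x$. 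Thus $x$ is a limit of points drawn from the expansions $X_{p,\lambda_i}$ with $\lambda_i \to \infty$, so $x \in \limsup_{\lambda \to \infty} X_{p,\lambda}$.

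For the reverse inclusion, suppose $x \in \limsup_{\lambda \to \infty} X_{p,\lambda}$ and fix any $r > \|x - p\|$. There exist $\lambda_i \to \infty$ and $q_i \in X_{p,\lambda_i}$ with $q_i \to x$; in particular $q_i \in B^n(p,r)$ for all large $i$. Given $\epsilon > 0$, Lemma \ref{lem:coneconverge} furnishes a threshold $\lambda_0$ such that whenever $\lambda_i \geq \lambda_0$ one has $q_i \in T_p X + \epsilon\B^n$, i.e.\ $\dist(q_i, T_p X) \leq \epsilon$. Letting $i \to \infty$ yields $\dist(x, T_p X) \leq \epsilon$, and since $\epsilon$ was arbitrary and $T_p X$ is closed by Lemma \ref{lem:cone}, we conclude $x \in T_p X$.

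No serious obstacle arises; the only delicate point is the bookkeeping in the forward inclusion, where the scaling $\lambda_i = t/\|p_i - p\|$ is chosen so that $q_i$ aligns with the direction of the secant ray $\ell_i$, thereby converting convergence of directions $\dir(\ell_i) \to u$ into convergence of points $q_i \to x$.
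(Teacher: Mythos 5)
Your proof is correct and follows the same route as the paper: the inclusion $\limsup_{\lambda\to\infty}X_{p,\lambda}\subset T_pX$ via Lemma \ref{lem:coneconverge} together with closedness of $T_pX$, and the inclusion $T_pX\subset\limsup_{\lambda\to\infty}X_{p,\lambda}$ from the defining secant-ray property of the tangent cone, which is exactly the content of the paper's appeal to Lemma \ref{lem:cone} and homothety invariance. Your version merely writes out explicitly (with the scalings $\lambda_i=t/\|p_i-p\|$) what the paper leaves as a one-line remark.
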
 
\begin{proof}
Lemma \ref{lem:coneconverge} quickly yields that $\limsup_{\lambda\to\infty} X_{p,\lambda}\subset T_p X$, and the reverse inclusion follows from Lemma \ref{lem:cone}, due to the invariance of $T_pX$ under homotheties.
\end{proof}

\subsection{}\label{subsec:multiplicity} Now we describe how one may assign a \emph{multiplicity} to each tangent cone  of a set $X\subset\R^n$. Let $\mu$ be a measure on $\R^n$. Then the (lower) multiplicity of $T_p X$ with respect to $\mu$ will be defined as
$$
m_\mu(T_p X):=\liminf_{\lambda\to\infty} \frac{\mu\Big(X_{p,\lambda}\cap B^n(p,r)\Big)}{\mu\Big(T_p X\cap B^n(p,r)\Big)},
$$
for some $r>0$. Since $T_p X$ is invariant under homotheties of $\R^n$ centered at $p$, it follows  that $m_\mu(T_p X)$ does not depend on $r$. If the Hausdorff dimension of $T_p X$ is an integer $d$, then we define the multiplicity of $T_p X$ with respect to the \emph{Hausdorff measure} $\mathcal{H}^d$ as
$$
m(T_p X):=m_{\mathcal{H}^d}(T_p X).
$$
One may also note that $m(T_pX)$ is simply the ratio of the $d$-dimensional \emph{lower densities}  $\mathcal{D}^d$ of $X$ and $T_p X$ at $p$. More explicitly, let us recall that for any set $X\subset\R^n$ at a  point $p$ 
$$
\mathcal{D}^d(X,p):=\liminf_{r\to 0}\frac{\mathcal{H}^d\big(X\cap B^n(p,r)\big)}{\mathcal{H}^d\big(r\B^d\big)}.
$$
 Now it is not difficult to check that
 $$
 m(T_p X)=\frac{\mathcal{D}^d(X,p)}{\mathcal{D}^d(T_p X,p)}.
 $$
 Note in particular that when $T_p X$ is an affine $d$-dimensional space, then $\mathcal{D}^d(T_p X,p)=1$ and therefore 
 $m(T_p X)=\mathcal{D}^d(X,p)$.
Figure \ref{fig:y} 
\begin{figure}[h]
   \centering
    \begin{overpic}[height=0.8in]{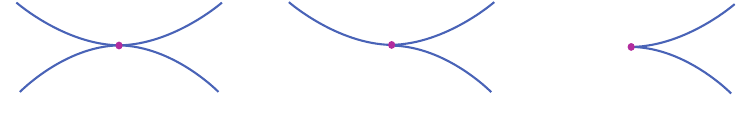} 
    \put(11.5,3){$m=2$}
     \put(46,3){$m=3/2$}
      \put(80,3){$m=2$}
      \put(11.7,-1){$\mathcal{D}=2$}
     \put(46.3,-1){$\mathcal{D}=3/2$}
      \put(80.3,-1){$\mathcal{D}=1$}
    \end{overpic}
   \caption{}
   \label{fig:y}
\end{figure}
illustrates some examples of multiplicity and density of sets $X\subset\R^2$ at the indicated points.

\section{Sets with Continuously Varying Flat  Tangent Cones: \\Proof of Theorem \ref{thm:main1}}
The main idea for proving Theorem \ref{thm:main1} is to show that $X$ can be represented locally as a multivalued graph with regular sheets. To this end, we first  need to record a pair of basic facts concerning graphs with flat tangent cones. The first observation is an elementary characterization of the graphs of $\C^1$ functions in terms of the continuity and flatness of their tangent cones:

\begin{lem}\label{lem:C1graph}
Let $U\subset\R^{n-1}$ be an open set,  $f\colon U\to\R$ be a function, and set $\ol a:=(a,f(a))$ for all $a\in U$. Then $f$ is $\C^1$ on $U$, if $T_{\ol a}\graph(f)$ is a hyperplane which depends continuously on $a$ and is never orthogonal to $\R^{n-1}$.
\end{lem}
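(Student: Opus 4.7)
The plan is to identify the derivative of $f$ at each point $a\in U$ with the ``slope'' of the tangent hyperplane $T_{\ol a}\graph(f)$, and then read off $\C^1$ regularity from the continuity of these slopes. Since $T_{\ol a}\graph(f)$ is a non-vertical hyperplane, it can be written uniquely as $\ol a + \graph(L_a)$ for some linear functional $L_a\colon\R^{n-1}\to\R$. The assumed continuity of $a\mapsto T_{\ol a}\graph(f)$ translates, thanks to the non-verticality constraint, into continuity of the map $a\mapsto L_a$ from $U$ into the dual of $\R^{n-1}$, and in particular $\|L_a\|$ is locally bounded.

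Next I would use Lemma \ref{lem:coneconverge} applied at $\ol a$ to show that, for each $a$, $f$ is differentiable at $a$ with $df(a)=L_a$. The lemma yields, for any $\epsilon>0$, a $\delta>0$ such that every $(b,f(b))\in\graph(f)$ with $\|(b-a,f(b)-f(a))\|\leq\delta$ lies within Euclidean distance $\epsilon\,\|(b-a,f(b)-f(a))\|$ of the non-vertical hyperplane $T_{\ol a}\graph(f)$. Since Euclidean distance to that hyperplane equals vertical distance divided by $\sqrt{1+\|L_a\|^2}$, this rearranges to
\[
|f(b)-f(a)-L_a(b-a)|\leq \epsilon\sqrt{1+\|L_a\|^2}\,\sqrt{\|b-a\|^2+(f(b)-f(a))^2}.
\]
Choosing $\epsilon$ small compared to $1/\sqrt{1+\|L_a\|^2}$ and rearranging yields a local Lipschitz bound $|f(b)-f(a)|\leq C\|b-a\|$ on the set where $(b,f(b))$ is $\delta$-close to $\ol a$, which in turn upgrades the previous inequality to $|f(b)-f(a)-L_a(b-a)|=o(\|b-a\|)$. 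That is, $f$ is differentiable at $a$ with derivative $L_a$; combined with the continuity of $a\mapsto L_a$ from the first step, this delivers $f\in\C^1(U)$.

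The main obstacle is that the preceding estimates only directly control those $b\in U$ for which $(b,f(b))$ lies in a small Euclidean ball around $\ol a$, rather than all $b$ in some neighborhood of $a$ in $\R^{n-1}$. One must therefore first verify that $f$ is continuous at $a$, so that $(b,f(b))$ genuinely stays close to $\ol a$ whenever $b$ is close to $a$. Here the ``full hyperplane'' hypothesis is essential: a jump discontinuity of $f$ near $a$ would produce a second local sheet of $\graph(f)$ whose points converge to some point distinct from $\ol a$ and hence contribute nothing to $T_{\ol a}\graph(f)$. To rule out such a sheet, one can apply Lemma \ref{lem:coneconverge} at base points on any hypothetical second sheet, using the continuity of $a\mapsto L_a$ to transport the local estimates, and thereby show that the projection to $\R^{n-1}$ of $\graph(f)\cap B^n(\ol a,r)$ is both open (from the Lipschitz estimate at $\ol a$) and relatively closed in a small Euclidean ball around $a$; connectedness then forces this projection to contain a genuine neighborhood of $a$, yielding continuity of $f$ at $a$ and completing the argument.
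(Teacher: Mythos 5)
Your differentiability core is sound: extracting the linear functional $L_a$ from the non\-/vertical hyperplane, converting Lemma \ref{lem:coneconverge} (via homothety\-/invariance of the cone) into the estimate $|f(b)-f(a)-L_a(b-a)|\le \epsilon\sqrt{1+\|L_a\|^2}\,\|(b-a,f(b)-f(a))\|$, bootstrapping a local Lipschitz bound, and concluding $df(a)=L_a$ with $a\mapsto L_a$ continuous. This is a more quantitative route than the paper's, which argues that slopes of secant lines through $\ol a$ and $\ol x_i$ converge to the slope of $T_{\ol a}\graph(f)$ for $n=2$ and then treats the partial derivatives along coordinate directions. But, as you note, both arguments only control those $b$ for which $(b,f(b))$ is already close to $\ol a$ in $\R^n$, so everything hinges on the continuity of $f$.

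That continuity step is where your proposal has a genuine gap. The data you propose to use there --- continuity of the slopes $a\mapsto L_a$ plus the pointwise tangent\-/cone estimates at base points of a putative second sheet --- cannot yield continuity of $f$. Take $A\subset\R^{n-1}$ dense with dense complement and let $f=0$ on $A$, $f=1$ off $A$. At every graph point the tangent cone is exactly the horizontal hyperplane through that point (the other level is at distance $1$ and contributes nothing), so $L_a\equiv 0$ is continuous, no tangent cone is vertical, and the Lipschitz estimate at each $\ol a$ holds trivially; yet $f$ is nowhere continuous, and for $r<1$ the projection of $\graph(f)\cap B^n(\ol a,r)$ is $A\cap B^{n-1}(a,r)$, which is neither open nor relatively closed. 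So both implications claimed in your last paragraph fail, and no argument using only that data can succeed: a jump discontinuity need not even organize itself into ``sheets.'' What rescues the lemma is that the hypothesis concerns the \emph{affine} hyperplanes $T_{\ol a}\graph(f)$, which pass through the moving points $\ol a=(a,f(a))$; their continuity (not just that of their directions) is what encodes continuity of $f$, and it does so directly, making the open/closed/connectedness scheme unnecessary: if $a_i\to a$, the hyperplane $T_{\ol a_i}\graph(f)$ meets the vertical line over $a$ at height $f(a_i)+L_{a_i}(a-a_i)$, and convergence of these affine hyperplanes to the non\-/vertical hyperplane $T_{\ol a}\graph(f)$ forces those heights to converge to $f(a)$; since the $L_{a_i}$ are bounded, $f(a_i)\to f(a)$. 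With continuity of $f$ established (in the paper's application of the lemma, in the proof of Theorem \ref{thm:main1}, $f$ is in fact proved continuous before the lemma is invoked), your estimates apply to all $b$ near $a$ and the rest of your argument goes through.
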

\begin{proof}
First suppose that $n=2$, and for any $a\in U$ let $m(a)$ be the slope of $T_{\ol a}\graph(f)$. Then for any sequence of points $x_i\in U$ converging to $a$,  the slope of the (secant) lines through $\ol a$ and $\ol x_i$ must converge to $m(a)$. So
$
f'(a)=m(a),
$
which shows that $f$ is differentiable at $a$. Further, since $T_{\ol a}\graph(f)$ depends continuously on $a$, $m$ is continuous. So $f$ is $\C^1$. 
Now for the general case where $U\subset\R^{n-1}$, let $e_i$, $i=1,\dots n-1$,  be the standard basis for $\R^{n-1}$. Then applying the same argument to the functions $f_i(t):=f(a+(t-a)e_i),$  shows that 
$\partial{f}/{\partial x_i}(a)=f_i'(a)=m_i(a)$ where $m_i(a)$ is the slope of the line in $T_{\ol a}\graph(f)$ which projects onto  $a+te_i$. Thus all partial derivatives $\partial{f}/{\partial x_i}$ exist and are continuous on $U$; therefore, it follows that $f$ is $\C^1$.
\end{proof}

Our next observation yields a simple criterion for checking that the tangent cone of a graph is flat:

\begin{lem}\label{lem:Tgraph}
Let $U\subset\R^{n-1}$ be an open set,  and $f\colon U\to\R$ be a continuous function. Suppose that, for some $a\in U$,  $T_{\ol a}\graph(f)$ lies in a hyperplane $H$ which is not  orthogonal to $\R^{n-1}$. Then $T_{\ol a}\graph(f)= H$.
\end{lem}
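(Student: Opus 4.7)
The inclusion $T_{\ol a}\graph(f)\subset H$ is given, so the task reduces to showing $H\subset T_{\ol a}\graph(f)$. Let $\pi\colon\R^n\to\R^{n-1}$ denote the projection onto the first $n-1$ coordinates. Since $H$ is not orthogonal to $\R^{n-1}$, the restriction $\pi|_H$ is an affine isomorphism onto $\R^{n-1}$; in particular $H$ contains no vertical directions. Moreover, continuity of $f$ on the open set $U$ forces $\ol a$ to be a non-isolated point of $\graph(f)$, so $T_{\ol a}\graph(f)$ is nonempty, and the assumed inclusion then forces $\ol a\in H$.

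The plan is to produce, for each ray $\ell\subset H$ emanating from $\ol a$, an explicit sequence of points in $\graph(f)\smallsetminus\{\ol a\}$ whose associated secant rays converge to $\ell$. Given such an $\ell$, set $v:=\pi(\dir(\ell))\neq 0$, and consider the sequence $a_i:=a+v/i\in U$ (for $i$ sufficiently large) together with its lifts $\ol{a_i}:=(a_i,f(a_i))\in\graph(f)$. Continuity of $f$ yields $\ol{a_i}\to\ol a$, and by compactness of $\Rays(\ol a)$ the corresponding secant rays $\ell_i$ admit a subsequential limit $\ell'$, which by definition belongs to $T_{\ol a}\graph(f)\subset H$.

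It remains to identify $\ell'$ with $\ell$. By construction $\pi(\dir(\ell_i))$ is a positive scalar multiple of $v$, so $\pi(\dir(\ell'))$ is a nonnegative multiple of $v$. The delicate case is that this scalar could be zero, i.e., $\dir(\ell')$ vertical; but $\ell'\subset H$ combined with the non-orthogonality of $H$ excludes precisely this possibility. Hence $\pi(\ell')$ is a ray from $a$ in the direction of $v$, and because $\pi|_H$ sends rays in $H$ from $\ol a$ bijectively onto rays in $\R^{n-1}$ from $a$, we deduce $\ell'=\ell$. Thus $\ell\in T_{\ol a}\graph(f)$, which, since $\ell$ was arbitrary, gives $H\subset T_{\ol a}\graph(f)$.

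The main technical point is ruling out the vertical degeneracy $\pi(\dir(\ell'))=0$ in the limit; this is exactly what the non-orthogonality hypothesis is engineered to prevent, and without it the secant directions could tip over to vertical and one would only recover the strict inclusion $T_{\ol a}\graph(f)\subsetneq H$.
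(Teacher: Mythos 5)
Your proof is correct and follows essentially the same route as the paper: lift points along the projection of the ray $\ell$ via $f$, pass to a limit of the resulting secant rays, and use that $\pi|_H$ is injective (no vertical directions in $H$) to identify the limit ray with $\ell$. The paper merely phrases this as a contradiction argument, while you argue directly and make the compactness of $\Rays(\ol a)$ and the exclusion of a vertical limit direction explicit; these are presentational differences only.
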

\begin{proof}
Suppose, towards a contradiction, that there exists a point $x\in H$, which does not belong to $T_{\ol a}\graph(f)$, and let $\ell$ be the ray which originates from ${\ol a}$ and passes through $x$ ($\ell$ is well-defined since $x$ must necessarily be different from ${\ol a}$). Then, since $T_{\ol a}\graph(f)$ is invariant under homothetic expansions  of $\R^n$ centered at $\ol a$, the open ray
 $\ell \smallsetminus\{{\ol a}\}$ must be disjoint from $T_{\ol a} \graph(f)$. So, by Lemma \ref{lem:cone}, there exists $\delta$, $r>0$ such that $C(\ell,\delta)\cap B^n({\ol a},r)$ is disjoint from $\graph(f)\smallsetminus\{{\ol a}\}$. Now let $\pi\colon\R^n\to\R^{n-1}$ be projection into the first $n-1$ coordinates. Then $\pi(\ell)$ is a ray in $\R^{n-1}$, since $H$ is not orthogonal to $\R^{n-1}$. Let $x_i\in \pi(\ell)$ be a sequence of points converging to $\pi({\ol a})$, then $\ol x_i:=(x_i,f(x_i))$ converges to ${\ol a}$ by continuity of $f$. Consequently any ray $\ell'$ which is a limit of  rays $\ell_i$ which pass through $\ol x_i $ and $\ol a$ must belong to $T_{\ol a}\graph(f)$.  Then $\ell'\neq \ell$ and  $\ell'\subset H$.  So $\pi(\ell')\neq\pi(\ell)$, since $H$ is not orthogonal to $\R^{n-1}$. But, since $\pi(\ol x_i)=x_i$, we must have $\pi(\ell_i)=\pi(\ell)$, which yields $\pi(\ell')=\pi(\ell)$, and we have a contradiction.
\end{proof}

Next recall that a set $X\subset\R^n$ is \emph{locally closed} if it is the intersection of an open set with a closed set. An equivalent formulation is that for every point $p$ of $X$ there exists $r>0$ such that $V:=B^n(p,r)\cap X$ is closed in $X$. Note that then the boundary $\partial V$ of $V$ as a subset of $X$ lies on  $\partial B^n(p,r)$  and therefore is disjoint from $p$.

\begin{lem}\label{lem:projection}
Let $X\subset\R^n$ be a locally closed set. Suppose that for all $p\in X$, $T_p X$ is flat and  depends continuously on $p$. 
Let $H$ be a hyperplane which is not orthogonal to $T_p X$, for some $p\in X$, and $\pi\colon\R^n\to H$ be the orthogonal projection. Then there exists an open neighborhood $U$ of $p$ in $X$ such that $\pi|_U$ is an open mapping.
\end{lem}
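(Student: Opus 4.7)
The plan is to choose $r > 0$ small enough that $V := X \cap B^n(p, r)$ is closed in $B^n(p, r)$ (using local closedness of $X$) and, by continuity of $q \mapsto T_q X$, small enough that for every $q \in V$ the hyperplane $T_q X$ makes an angle with $H$ that is uniformly bounded below by some positive constant. Set $U := X \cap \inte B^n(p, r)$, an open neighborhood of $p$ in $X$. To verify that $\pi|_U$ is open, it suffices to show that for every $q \in U$ and every sufficiently small $s > 0$, the compact image $K_s := \pi(X \cap B^n(q, s))$ contains an open $H$-neighborhood of $\pi(q)$.

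The first substantive step is a \emph{sphere-distance estimate}: there exists a constant $c > 0$ such that for every $q \in V$ and every sufficiently small $s > 0$, every $x \in X$ with $\|x - q\| = s$ satisfies $\|\pi(x) - \pi(q)\| \geq cs$. This follows from Lemma \ref{lem:coneconverge}: for $s$ small the unit vector $(x - q)/s$ lies close to the hyperplane $T_q X - q$, which by our choice of $r$ is uniformly bounded away from the normal direction to $H$; hence $\pi(x - q)$ has length at least a fixed fraction of $s$.

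With this estimate in hand, I would argue by contradiction. Let $B \subset H$ denote the open ball of radius $cs$ about $\pi(q)$. If $B \not\subset K_s$, pick $y_0 \in B \setminus K_s$ and let $y_*$ be the last point of $K_s$ along the segment from $\pi(q)$ to $y_0$, which exists since $K_s$ is compact. Then $y_*$ is a boundary point of $K_s$ in $H$ with $\|y_* - \pi(q)\| < cs$, so by the sphere-distance estimate every preimage $x_* \in X \cap B^n(q, s)$ of $y_*$ satisfies $\|x_* - q\| < s$, that is, $x_*$ lies in the interior of $B^n(q, s)$.

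The main obstacle is now to derive a contradiction from $x_*$ being an interior preimage of a boundary point of $K_s$. The natural plan is to invoke the tangent cone $T_{x_*} X$---again a flat hyperplane not orthogonal to $H$---and conclude that $X$-points arbitrarily near $x_*$ project onto a full $H$-neighborhood of $y_*$. However, Lemma \ref{lem:cone} alone produces $X$-points only in each prescribed direction tangent to $T_{x_*} X$, not at each prescribed \emph{location} near $y_*$. Closing this gap is the technical crux, and I expect it to require an essential use of the continuous variation of $T_{q'} X$ as $q'$ ranges over $X$ near $x_*$, in order to rule out the ``sparse'' configurations (such as $\{o\} \cup \{\pm 1/2^n\}$ at the origin, noted before Corollary \ref{cor:outer}) that the tangent-cone hypothesis by itself does not forbid.
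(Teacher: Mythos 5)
Your set-up and the sphere-distance estimate are fine in substance, and they parallel what the paper does more qualitatively: for $r$ small the vertical line $\pi^{-1}(\pi(q))$ meets $V:=X\cap B^n(q,r)$ only at $q$ (by Lemma \ref{lem:cone}), so by compactness $\pi(\partial V)$ misses a ball $B(\pi(q),s)\subset H$. (Incidentally, you only ever apply your estimate at the fixed point $q$, so the uniformity in $q\in V$ you assert --- which the pointwise Lemma \ref{lem:coneconverge} does not obviously provide --- is not needed.) The problem is that your argument stops exactly where the lemma's real content lies: you reduce to ``derive a contradiction from $x_*$ being an interior preimage of a boundary point of $K_s$,'' correctly observe that Lemma \ref{lem:cone} yields points of $X$ in prescribed \emph{directions} rather than over prescribed \emph{locations} in $H$, and then leave this step open. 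As written, this is a genuine gap, not a routine verification, and your guess about how to close it (a delicate use of the continuity of $q'\mapsto T_{q'}X$ near $x_*$) is not how the difficulty is actually resolved.

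The missing idea is an extremal-ball (supporting-cylinder) argument, which needs flatness only at a single well-chosen point and uses continuity just once, at the very start, to fix a neighborhood $U$ on which no tangent cone is orthogonal to $H$. Supposing $\pi(q)$ is not interior to $\pi(V)$, pick $x\in B(\pi(q),s/2)\smallsetminus\pi(V)$ and let $t:=\dist\big(x,\pi(V)\big)$, so the ball $B(x,t)\subset H$ touches the compact set $\pi(V)$ while its interior is disjoint from it; since $t\le\|x-\pi(q)\|\le s/2$, one has $B(x,t)\subset B(\pi(q),s)$, hence $B(x,t)$ misses $\pi(\partial V)$, so the solid cylinder $\pi^{-1}\big(B(x,t)\big)$ supports $V$ at some point $y$ lying in the relative interior of $V$ in $X$. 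Near $y$ the set $X$ stays outside the open cylinder, so by Lemma \ref{lem:cone} the hyperplane $T_yX=T_yV$ is contained in the closed half-space bounded by the cylinder's tangent hyperplane at $y$; a hyperplane through $y$ contained in such a half-space must coincide with the bounding hyperplane, which contains the direction orthogonal to $H$. Thus $T_yX$ is orthogonal to $H$, contradicting the choice of $U$. This is precisely what defeats the ``sparse'' configurations you worry about: a set like $\{o\}\cup\{\pm 1/2^n\}$ cannot have a flat tangent cone at the point where a supporting cylinder touches it, so no local density or continuity analysis at $x_*$ is required.
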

\begin{proof}
By continuity of $p\mapsto T_p X$, we may choose an open neighborhood $U$ of $p$  so small that no tangent cone  of $U$ is orthogonal to $H$. We claim that $\pi$ is then an open mapping on $U$. To see this, let $q\in U$, and set
$$
V:=B^n(q,r)\cap X.
$$ 
Choosing $r>0$ sufficiently small, we may assume that $V\subset U$. Further, since $X$ is locally closed, we may assume that $V$ is compact, and it also has the following two properties:  first,  the boundary $\partial V$ of $V$ in $X$ lies on $\partial B^n(q,r)$, 
which implies that 
$$
q\not\in\partial V;
$$
second, by Lemma \ref{lem:cone},  $V$ intersects  the line $L_q:=\pi^{-1}(\pi(q))$ which passes through $q$ and is orthogonal to $H$ only at $q$, for otherwise $L_q$ would have to belong to $T_q X$, which would imply that $T_q X$ is orthogonal to $H$.
So we conclude that
$$
\pi(q)\not\in \pi(\partial V).
$$
 But recall that $V$ is compact, and so $\partial V$ is compact as well. Thus there exists, for some $s>0$,  a closed ball $B(\pi(q),s)\subset H$  such that
\begin{equation}\label{eq:pidv}
B\big(\pi(q),s\big)\cap \pi(\partial V)=\emptyset.
\end{equation}
Now suppose towards a contradiction that $\pi(q)$ is not an interior point of $\pi(V)$. Then $B(\pi(q),\delta)\not\subset \pi(V)$, for any $\delta>0$. In particular there is a point
$$
x\in B\big(\pi(q),s/2\big)\smallsetminus\pi(V).
$$ 
Since $x\not\in\pi(V)$, and $\pi(V)$ is compact, there exists  a ball  $B(x,t)\subset H$ which intersects $\pi(V)$ while the interior of $B(x,t)$ is disjoint from $\pi(V)$. Note that, since $q\in V$, we have 
$t\leq\|x-\pi(q)\|\leq s/2$, and so for any $y\in B(x,t)$,
$$
\|y-\pi(q)\|\leq \|y-x\|+\|x-\pi(q)\|\leq t+s/2\leq s,
$$
which  yields
$$
B(x,t)\subset B(\pi(q),s).
$$
Thus $B(x,t)\cap\pi(\partial V)=\emptyset$ by \eqref{eq:pidv}.
So  the cylinder $C:=\pi^{-1}(B(x,t))$ supports $V$ at some point $y$ in the interior of $V$. Then the tangent cone $T_yX=T_y V$ must be tangent  to $C$, and hence be orthogonal to $H$, which is a contradiction, since $V\subset U$, and $U$ was assumed to have no tangent cones orthogonal to $H$.
\end{proof}

Now we are ready to prove the main result of this section:

\begin{proof}[Proof of Theorem  \ref{thm:main1}]
We will divide the argument into two parts. First, we show that $X$ is a union of $\C^1$ hypersurfaces, and then we show that it is a hypersurface with the assumption on multiplicity.

\emph{Part I.} Assume that $o\in X$ and $T_oX$ coincides with the hyperplane $x_n=0$, which we refer to as $\R^{n-1}$. Let  $\pi\colon\R^n\to\R^{n-1}$ be the projection into the first $n-1$ coordinates. Then, since the $x_n$-axis does not belong to $T_o X$, it follows from Lemma \ref{lem:cone} that there exists an open neighborhood $U$ of $o$ in $X$ which intersects the $x_n$-axis only at $o$.
Further, by Lemma \ref{lem:projection}, we may choose $U$ so small that  $\pi(U)$ is open. So $\Omega:=B^{n-1}(o,r)\subset\pi(U)$ for some $r>0$. Now let $U':=\pi^{-1}(\Omega)\cap U$.  Since $X$ is locally closed, we may assume that $U'$ is compact. Further, since $\pi(U')=\Omega$, we may define a function $f\colon \Omega\to\R$ by letting  $f(x)$ be the supremum of the height (i.e., the $n^{th}$-coordinate) of $\pi^{-1}(x)\cap U'$ for any $x\in \Omega$. Then $f(o)=0$, so $o\in\graph(f)$. Further we claim that $f$ is continuous. Then Lemma \ref{lem:Tgraph} yields that the tangent cones of $\graph(f)$ are hyperplanes. Of course the tangent hyperplanes of $\graph(f)$ must also vary continuously  since $\graph(f)\subset X$. So  $f$ must be $\C^1$ by Lemma \ref{lem:C1graph}. Consequently, the graph of $f$ is a $\C^1$ embedded disk in $X$ containing $o$. This shows that each point $p$ of $X$ lies in a $\C^1$ hypersurface, since after a rigid motion we can always assume that $p=o$ and $T_p X=\R^{n-1}$.

So to complete the first half of the proof, i.e., to show that $X$ is composed of $\C^1$ hypersurfaces, it remains only to verify that the function $f$ defined above is continuous. To see this, first note that for every $x\in \pi(U')$, the set $\pi^{-1}(x)\cap U'$ is discreet, since none of the the tangent hyperplanes of $U$ contain the line $\pi^{-1}(x)$, and thus we may apply Lemma \ref{lem:cone}. 
In particular, since $U'$ is bounded,  it follows that $\pi^{-1}(x)\cap U'$ is finite. So we may let $\ol x$ be the ``highest" point of $\pi^{-1}(x)\cap U'$, i.e., the point in $\pi^{-1}(x)$ with the largest $n^{th}$ coordinate.  Now to establish the continuity of $f$, we just have to check that if $x_i\in \Omega$ form a sequence of points converging to $x\in \Omega$, then the corresponding points $\ol x_i$ converge to $\ol x$. To see this, first recall that  $\ol x_i$  must have a limit  point $\ol x'\in U'$,  since $U'$ is compact. So $\ol x'$ cannot lie above $\ol x$, by definition of $\ol x$. Suppose, towards a contradiction, that $\ol x'$ lies strictly below $\ol x$. By Lemma \ref{lem:projection}  there are then open neighborhoods $V$ and $V'$ of $\ol x$ and $\ol x'$ in $U$ respectively such that $\pi(V)$ and $\pi(V')$ are open in $\R^{n-1}$. Further, we may suppose that $V'$ lies strictly below $V$. Note that the sequence $\ol x_i$ must eventually lie in $V'$, and $x_i$ must eventually lie in $\pi(V)\cap\pi(V')$. Thus, eventually $\pi^{-1}(x_i)\cap V$ will be higher than $\pi^{-1}(x_i)\cap V'=\ol x_i$, which is a contradiction. 

\emph{Part II.} Now we show that if the multiplicity of each tangent cone $T_p X$ is at most $m<3/2$, then $X$ is a hypersurface, which will complete the proof. To this end let $f$ and $\Omega$ be as in Part I, and  define $g\colon \Omega\to\R$ by letting  $g(x)$ be the infimum of the height of $\pi^{-1}(x)\cap U'$ for any $x\in \Omega$. Then it follows that $g$ is also continuous by essentially repeating the argument in Part I given for $f$. Now it suffices to show that $f\equiv g$ on an open neighborhood of $o$. To see this  first recall that we chose $U'$ so that it intersects the $x_n$-axis, or $\pi^{-1}(o)$ only at $o$. Thus $f(o)=g(o)$. Now suppose, towards a contradiction, that there exists a sequence of points $x_i\in \Omega\smallsetminus\{o\}$ converging to $o$ such that $f(x_i)\neq g(x_i)$, and let $W_i:=B^{n-1}(x_i,r_i)$ be the balls of largest radii centered at $x_i$ such that $f\neq g$ everywhere on the interior of $W_i$, i.e., set $r_i$ equal to the distance of $x_i$ from the set of points where $f=g$. Then each $W_i$ will have a boundary point $o_i$ such that $f(o_i)=g(o_i)$. 
Note that since $o$ is not in the interior of $W_i$, $r_i\leq\|o-x_i\|$, so by the 
triangle
inequality:
$$
\|o-o_i\|\leq \|o-x_i\|+\|x_i-o_i\|=\|o-x_i\|+r_i\leq 2\|o-x_i\|.
$$
Thus, since $x_i$ converges to $o$, it follows that $o_i$ converges to $o$. Now if we set $\ol o_i:=(o_i, f(o_i))$, then it follows that $T_{\ol o_i} X$ converges to $T_o X=\R^{n-1}$, by the continuity of $f$ and $p\mapsto T_p X$. 
Further note that, if $\theta_i$ is the angle between $T_{o_i} X$ and $\R^{n-1}$, then, for any $A\subset\R^{n-1}$,
$$
\mathcal{H}^{n-1}\Big(T_{\ol o_i} X\cap \pi^{-1}(A)\Big)=\frac{1}{\cos(\theta_i)}\mathcal{H}^{n-1}(A).
$$
In particular, since $\theta_i\to 0$, we may choose for any $\epsilon>0$,  an index $k$ large enough so that
$$
\mathcal{H}^{n-1}\Big(T_{\ol o_k} X\cap \pi^{-1}\big(B^{n-1}(o_k,1)\big)\Big)< (1+\epsilon)\,\mathcal{H}^{n-1}\Big(B^{n-1}(o_k,1)\Big).
$$
In addition, since $T_{\ol o_k} X$ converges to $\R^{n-1}$, for any $\delta>0$ we may assume that $k$ is so large that
$$
T_{\ol o_k} X\cap\pi^{-1}\Big(B^{n-1}(o_k,1)\Big)\subset B^{n}(o_k,1+\delta).
$$
See Figure \ref{fig:cats}.
\begin{figure}[h]
   \centering
    \begin{overpic}[height=1.5in]{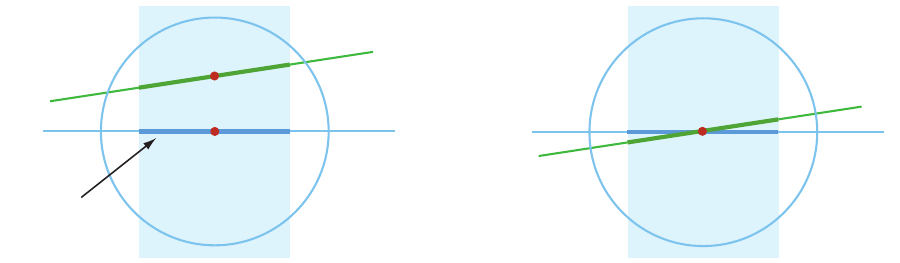} 
    \put(13,-3){$\pi^{-1}\big(B^{n-1}(o_k,1)\big)$}
    \put(71,-3){$\pi^{-1}(\B^{n-1})$}
    \put(23,12){$o_k$}
    \put(23,18){$\ol o_k$}
    \put(45, 13){$\R^{n-1}$}
    \put(43,23){$T_{\ol o_k}X $}
    \put(36, 7){$B^n(o_k,1+\delta)$}
     \put(-1,5){$B^{n-1}(o_k,1)$}
     \put(77.3,12){$o$}
     \put(90, 7){$(1+\delta)\B^n$}
    \end{overpic}
   \caption{}
   \label{fig:cats}
\end{figure}

Now fix $k$ and let us assume, after a translation, that $o:=\ol o_k$. Then $B^{n-1}(o_k,1)=\B^{n-1}$ and $B^{n}(o_k,1+\delta)=(1+\delta)\B^{n}$. So we may rewrite the last two displayed expressions as:
\begin{gather}
 \label{eq:measure}  \mathcal{H}^{n-1}\Big(T_{o} X\cap \pi^{-1}\big(\B^{n-1}\big)\Big)< 
(1+\epsilon)\,\mathcal{H}^{n-1}(\B^{n-1}),\\
\label{eq:measure1} T_{o} X\cap\pi^{-1}(\B^{n-1})\subset (1+\delta)\B^{n}.
\end{gather}
Further, if we set $W:=W_k$, then $o\in\partial W$, and  $f\neq g$ everywhere in the interior of $W$; therefore,  the projection  $\pi\colon X\to\R^{n-1}$ is at least $2$ to $1$ over the interior of $W$. So the  projections $\pi\colon X_{o,\lambda}\to W_{o,\lambda}$ are  at least $2$ to $1$ as well for all $\lambda\geq 1$. But note that $W_{o,\lambda}$ eventually fills half of $\B^{n-1}$ (since $o\in\partial W$). Consequently,  
\begin{equation}\label{eq:measure2}
\liminf_{\lambda\to\infty} \mathcal{H}^{n-1}\Big(X_{o,\lambda}\cap\pi^{-1}(\B^{n-1})\Big)\geq \frac{3}{2}\mathcal{H}^{n-1}(\B^{n-1}),
\end{equation} 
due to the basic fact that orthogonal projections, since they are uniformly Lipschitz,  do not increase the Hausdorff measure of a set, e.g., see \cite[Lemma 6.1]{falconer}. Also, here we have used the fact that, by Lemma \ref{lem:projection},  $\pi(X)$ contains an open neighborhood of $o$ and thus $\pi(X_{o,\lambda})=(\pi(X))_{o,\lambda}$ eventually covers $\B^{n-1}$ as $\lambda$ grows large. Combining \eqref{eq:measure2} with \eqref{eq:measure}, now yields that
\begin{eqnarray}\label{eq:measure3}
\liminf_{\lambda\to\infty} \mathcal{H}^{n-1}\Big(X_{o,\lambda}\cap\pi^{-1}(\B^{n-1})\Big)&\geq& \frac{3}{2(1+\epsilon)}\mathcal{H}^{n-1}\Big(T_{o} X\cap \pi^{-1}(\B^{n-1})\Big)\\ \notag
&\geq& \frac{3}{2(1+\epsilon)}\mathcal{H}^{n-1}\Big(T_{o} X\cap \B^n\Big),
\end{eqnarray}
since $\B^n\subset\pi^{-1}(\B^{n-1})$.
Next note that by \eqref{eq:measure1}
$$
T_oX\cap\pi^{-1}(\B^{n-1})+\delta\B^n
\subset (1+2\delta)\B^n.
$$
Further, by Lemma \ref{lem:coneconverge}, for sufficiently large  $\lambda$,
$$
X_{o,\lambda}\cap\pi^{-1}(\B^{n-1})\subset (T_oX+\delta\B^n)\cap\pi^{-1}(\B^{n-1})
\subset T_oX\cap\pi^{-1}(\B^{n-1})+\delta\B^n.
$$
Thus it follows that, for sufficiently large $\lambda$, $X_{o,\lambda}\cap\pi^{-1}(\B^{n-1})$ lies in $(1+2\delta)\B^n$, and so we may write
$$
X_{o,\lambda}\cap\pi^{-1}(\B^{n-1})\subset X_{o,\lambda}\cap(1+2\delta)\B^n.
$$
This in turn yields that
$$
\liminf_{\lambda\to\infty} \mathcal{H}^{n-1}\Big(X_{o,\lambda}\cap(1+2\delta)\B^{n}\Big)\geq\liminf_{\lambda\to\infty} \mathcal{H}^{n-1}\Big(X_{o,\lambda}\cap\pi^{-1}(\B^{n-1})\Big).
$$
The last inequality together with \eqref{eq:measure3} now shows
$$
\liminf_{\lambda\to\infty} \mathcal{H}^{n-1}\Big(X_{o,\lambda}\cap\B^{n}\Big)\geq \frac{3}{2(1+\epsilon)(1+2\delta)^{n-1}}\mathcal{H}^{n-1}\Big(T_{o} X\cap \B^n\Big).
$$
So, recalling that $\epsilon$ and $\delta$ may be chosen as small as desired, we conclude that the multiplicity of $T_{\ol o_k} X$ becomes arbitrarily close to $3/2$ as $k$ grows large. In particular it eventually exceeds any given constant $m<3/2$, which is a contradiction, as  we had desired.
\end{proof}

\section{Regularity of Sets with Positive Support and  Reach:\\ Proof of Theorem \ref{thm:main2} and Another Result}

\subsection{}The proof of the first half of Theorem \ref{thm:main2}, i.e., the $\C^1$ regularity of $X$, quickly follows from Theorem  \ref{thm:main1} via the next observation:

\begin{lem}\label{lem:posreach1}
Let $X\subset\R^n$ be a set with flat tangent cones and positive support.  Then the tangent cones of $X$ vary continuously.
\end{lem}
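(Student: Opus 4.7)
The plan is to show that at each point $p\in X$, the flat tangent cone $T_pX$ is rigidly determined by any supporting ball at $p$, and then to extract the continuity of $p\mapsto T_pX$ from a compactness argument on the centers of such balls.

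The first step is a rigidity fact: if $B=B^n(c,r)$ is a closed ball with $p\in\partial B$ and $\inte(B)\cap X=\emptyset$, then $T_pX$ equals the tangent hyperplane $H$ to $B$ at $p$. The inclusion $T_pX\subset H_-$, the closed half-space bounded by $H$ on the side opposite to $c$, comes from expanding $\|x-c\|^2\ge r^2=\|p-c\|^2$ for $x\in X$ near $p$, $x\neq p$, which yields $\langle (x-p)/\|x-p\|,\,p-c\rangle \ge -\|x-p\|/2$. Letting $x\to p$ along a secant sequence with limit direction $u$ gives $\langle u,\,p-c\rangle \ge 0$, so every ray in $T_pX$ lies in $H_-$. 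Since $T_pX$ is assumed to be an entire hyperplane through $p$ and $H$ is also a hyperplane through $p$, any two such hyperplanes that are distinct intersect in codimension $2$ and the former must have points on \emph{both} sides of the latter; hence $T_pX=H$.

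To establish continuity, suppose $p_i\to p$ in $X$, and at each $p_i$ choose a support ball $B_i=B^n(c_i,r)$. The sequence $(c_i)$ is bounded, so any subsequence has a further convergent subsequence $c_{i_k}\to c$. Then $\|p-c\|=\lim\|p_{i_k}-c_{i_k}\|=r$, so $p\in\partial B^n(c,r)$, and $\inte\, B^n(c,r)\cap X=\emptyset$ because any $x\in X$ with $\|x-c\|<r$ would satisfy $\|x-c_{i_k}\|<r$ for large $k$, contradicting the support property at $p_{i_k}$. Thus $B^n(c,r)$ is a support ball at $p$, and the rigidity step identifies $T_pX$ as the hyperplane through $p$ normal to $c-p$, while $T_{p_{i_k}}X$ is the hyperplane through $p_{i_k}$ normal to $c_{i_k}-p_{i_k}$; these converge to $T_pX$ as affine hyperplanes. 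A ``subsequence of every subsequence'' argument in the (locally compact) space of affine hyperplanes through a bounded neighborhood of $p$ then upgrades this to $T_{p_i}X\to T_pX$.

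The subtle point to handle is a sign ambiguity: at different points, $B_i$ may be chosen on either side of $T_{p_i}X$, so two subsequences of $(c_i)$ may limit to centers $c$ and $c^*$ that lie on opposite sides of $T_pX$. This is harmless, because rigidity applied to either of the two limiting balls produces the same tangent hyperplane $T_pX$; hence every convergent subsequence of $(T_{p_i}X)$ has the same limit, which is what the subsequence argument needs. No deep machinery beyond the elementary computation in the rigidity step is required.
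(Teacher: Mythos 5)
Your proof is correct and follows essentially the same route as the paper's: the support ball pins down the flat tangent cone as its tangent hyperplane (you do this by a direct inner-product computation where the paper invokes its Lemma on cones), and continuity then follows by passing to limits of support balls at nearby points, with the reflection/sign ambiguity handled just as in the paper. Your treatment is merely a more explicit, subsequence-careful version of the same argument.
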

\begin{proof}
First note that the support ball $B_p$ of $X$ of radius $r$ at $p$ must also support $T_pX$ by Lemma \ref{lem:cone}. Thus, since $T_pX$ is a hyperplane, $B_p$ must be the unique support ball of radius $r$ for $X$ at $p$, up to a reflection through $T_pX$. Next note that if $p_i$ are a sequence of points of $X$ which converge to a point $p$ of $X$, then $B_{p_i}$ must converge to a support ball of $X$ at $p$ of radius $r$. Thus the limit of $B_{p_i}$ must coincide with $B_p$ or its reflection. Since $T_{p_i} X$ are tangent to $B_{p_i}$, it then follows that $T_{p_i}X$ must converge to the tangent hyperplane of $B_p$ at $p$, which is just $T_p X$. Thus $T_p X$ varies continuously.
\end{proof}

\subsection{}To prove the second half of Theorem \ref{thm:main2} which is concerned with the $\C^{1,1}$ regularity of $X$, we need the following lemma. A ball \emph{slides freely} in a convex set $K\subset\R^n$, if for some uniform constant $r>0$, there passes through each point of $\partial K$ a ball of radius $r$ which is contained in $K$. The proof of the following observation may be found in  \cite[p. 97]{hormander}:

\begin{lem} [H\"{o}rmander]\label{lem:hormander}
Let $K\subset\R^n$ be a convex set. If a ball slides freely inside $K$, then $\partial K$ is a $\C^{1,1}$ hypersurface.\qed
\end{lem}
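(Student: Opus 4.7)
The plan is to proceed in three steps: (i) show the outward unit normal $\nu(p)$ is well-defined on $\partial K$; (ii) show $\nu$ is $(1/r)$-Lipschitz; and (iii) transfer this to $\C^{1,1}$ regularity of $\partial K$ by a local graph representation. For step (i), convexity supplies at least one supporting hyperplane $H$ at every $p\in\partial K$, while the hypothesis supplies an inscribed ball $B(c_p,r)\subset K$ with $p\in\partial B(c_p,r)$; since $H$ cannot separate any point of $K$ from $p$, it must also support $B(c_p,r)$ at $p$. Thus $H$ is forced to be orthogonal to $p-c_p$, hence unique, and one sets $\nu(p):=(p-c_p)/r$.

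For step (ii), I would exploit that the inscribed ball at $q$ lies in the closed halfspace bounded by the tangent hyperplane at $p$. Testing this at the extremal point $c_q+r\nu(p)\in B(c_q,r)$ and substituting $c_q=q-r\nu(q)$ yields the key inequality
\[
\langle q-p,\nu(p)\rangle \;\leq\; -r\bigl(1-\langle\nu(p),\nu(q)\rangle\bigr).
\]
Swapping the roles of $p$ and $q$ and adding, together with the identity $1-\langle\nu(p),\nu(q)\rangle=\tfrac12\|\nu(p)-\nu(q)\|^2$ and Cauchy--Schwarz, produces the desired bound $\|\nu(p)-\nu(q)\|\leq \|p-q\|/r$.

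For step (iii), near a fixed $p_0\in\partial K$, after a rigid motion arrange that $p_0=o$ and $\nu(p_0)=e_n$. The Lipschitz (hence continuous) dependence of $\nu$ allows one to represent $\partial K$ locally as the graph of a function $f$ over a small disk in $\R^{n-1}$ on which the $n$-th component of $\nu$ stays bounded below by, say, $1/2$. The identity $\partial_i f(a)=-\nu_i(\ol a)/\nu_n(\ol a)$ with $\ol a:=(a,f(a))$ then shows that $\nabla f$ is Lipschitz in $a$, which is the definition of $\C^{1,1}$.

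The main obstacle is step (ii): the trick is to encode the geometric constraint ``the inscribed ball at $q$ lies below the tangent hyperplane at $p$'' by testing it at the extremal point $c_q+r\nu(p)$ of $B(c_q,r)$, since that is the point of the ball farthest above the tangent plane at $p$. Once this inequality is secured, the remainder of the Lipschitz estimate is purely algebraic, and steps (i) and (iii) are essentially routine reductions.
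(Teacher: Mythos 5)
Your argument is correct, and it is worth noting that the paper does not actually prove this lemma: it is quoted from H\"ormander's book (the citation to \cite[p.\ 97]{hormander} replaces the proof), so your proposal supplies a self-contained argument where the paper defers to the literature. What you give is the standard ``rolling ball'' proof: the inscribed ball forces the support hyperplane at each boundary point to be tangent to that ball, hence unique, which defines the Gauss map $\nu$; testing the inclusion $B(c_q,r)\subset K\subset\{x:\langle x-p,\nu(p)\rangle\le 0\}$ at the extremal point $c_q+r\nu(p)$ gives exactly $\langle q-p,\nu(p)\rangle\le -r\bigl(1-\langle\nu(p),\nu(q)\rangle\bigr)$, and symmetrizing plus Cauchy--Schwarz yields the $(1/r)$-Lipschitz bound; I checked the algebra and it is right. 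Two routine points deserve explicit mention to make step (iii) airtight: first, the local representation of $\partial K$ as the graph of a convex function $f$ over the tangent hyperplane, and the differentiability of $f$ with $\partial_i f(a)=-\nu_i(\ol a)/\nu_n(\ol a)$, rest on the standard facts that a convex function is differentiable exactly where its subgradient is unique and that subgradients correspond to support hyperplanes (the same facts the paper invokes via Schneider in its Lemma on convexity); second, to conclude that $a\mapsto\nabla f(a)$ is Lipschitz you compose the Lipschitz map $\nu$ on $\partial K$ with $a\mapsto(a,f(a))$, so you should note that $f$ is locally Lipschitz, which follows from the bound $\nu_n\ge 1/2$ (hence $\|\nabla f\|\le\sqrt{3}$) on the small disk you chose. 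With those two remarks added, the proof is complete and arguably more elementary than chasing the reference, though it proves only what is needed here rather than H\"ormander's more general characterization of sets in which a ball slides freely.
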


Now the $\C^{1,1}$ regularity of $X$ in Theorem \ref{thm:main2}  follows by means of a M\"{o}bius transformation which locally sends hypersurfaces with double positive support to convex hypersurfaces in which a ball slides freely:

\begin{lem}\label{lem:inversion}
Let $X\subset\R^n$ be a hypersurface with flat tangent cones and double positive support by balls of uniform radius $r$. Let $B$ be any of these balls,  $p$ be the point of contact between $B$ and $X$, and  $i(X)$ be the inversion of $X$ with respect to $\partial B$. Then an open neighborhood  of $p$ in $i(X)$ lies on the boundary of a convex set in which a ball slides freely.
\end{lem}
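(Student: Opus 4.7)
The strategy is to use the inversion $i$ to transform the two families of support balls of $X$: the balls on the $B$-side will become a family of closed balls each containing $i(X)$, yielding a convex envelope, while the balls on the $B'$-side will become inscribed balls tangent to $i(X)$, providing the freely sliding ball.

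Arrange coordinates so that $p=o$, the tangent plane $T_pX$ is $\{x_n=0\}$, and $B$ is the ball of radius $r$ centered at $c:=-re_n$; the second support ball of $X$ at $p$ is then the ball $B'$ of radius $r$ centered at $re_n$. A direct computation shows that $i$ fixes $p$ and sends $B'$ to the ball $B'':=i(B')$ of radius $r/3$ centered at $-re_n/3$, tangent to $\partial B$ at $p$ from inside. By Theorem \ref{thm:main1}, $X$ is a $\C^1$ hypersurface near $p$, and by Lemma \ref{lem:posreach1}, for every $q\in X$ close to $p$ the support balls $B_q$ (on the $B$-side) and $B'_q$ (on the $B'$-side) of radius $r$ exist uniquely and depend continuously on $q$.

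To build the convex envelope, observe that for $q$ sufficiently close to $p$ the center $c$ lies in $\inte(B_q)$; consequently $i$ sends $B_q$ to the closed exterior of another ball $B_q^*$, namely $i(B_q)=\R^n\setminus\inte(B_q^*)$, with $B_q^*\to B$ as $q\to p$. The inclusion $X\cap\inte(B_q)=\emptyset$ becomes $i(X)\subset\ol{B_q^*}$, and hence for a small enough neighborhood $V$ of $p$ in $X$ the intersection
$$K:=\bigcap_{q\in V}\ol{B_q^*}$$
is a closed convex set containing $i(V)$. Since $i(q)\in\partial B_q^*$ for each $q\in V$, we have $i(V)\subset\partial K$, and invariance of domain then yields $i(V)=\partial K$ in some neighborhood of $p$.

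For each $q\in V$, the ball $B''_q:=i(B'_q)$ is tangent to $i(X)$ at $i(q)$ with $\inte(B''_q)\cap i(X)=\emptyset$; its radius varies continuously with $q$ and tends to $r/3$ as $q\to p$, so after shrinking $V$ these radii are bounded below by a uniform $\rho>0$. The inclusion $B''_q\subset K$, which produces the freely sliding ball, is equivalent via the involution $i$ to the disjointness
$$B'_q\cap\inte(B_{q'})=\emptyset\quad\text{for all }q,q'\in V.$$

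The main obstacle is this last disjointness: two support balls of radius $r$ on opposite sides of $X$ at nearby points must have disjoint interiors. The plan is to verify it directly from the double positive support hypothesis by expanding the squared distance between the centers $q+r\nu_q$ and $q'-r\nu_{q'}$ and using the pointwise support-ball inequalities at both $q$ and $q'$ (which give $|q'\cdot\nu_q-q\cdot\nu_q|$ bounded by $|q'-q|^2/(2r)$ up to higher order, and the corresponding bound at $q'$) to force the distance to be at least $2r$ in a sufficiently small $V$. Once this is in place, $K$ is convex, $i(V)\subset\partial K$ near $p$, and a ball of radius $\rho$ slides freely inside $K$, which is precisely the conclusion of the lemma.
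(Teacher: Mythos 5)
Your overall strategy coincides with the paper's: invert about $\partial B$, let $K$ be the intersection of the images of the outer support balls so that $i(V)\subset\partial K$, and use the inverted inner support balls, whose radii stay bounded below near $p$, as the freely sliding balls. The genuine gap is exactly the step you flag as ``the main obstacle'' and leave as a plan: the disjointness $B'_q\cap\inte(B_{q'})=\emptyset$ for nearby $q,q'$, at the \emph{original} radius $r$, cannot be forced by the two pointwise support-ball inequalities. Those inequalities say only $|\langle q'-q,\nu_q\rangle|\le \|q'-q\|^2/(2r)$ and $|\langle q'-q,\nu_{q'}\rangle|\le \|q'-q\|^2/(2r)$, i.e.\ they constrain the normals solely in the direction $q'-q$. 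Writing $d=q'-q$, $a=q+r\nu_q$, $b=q'-r\nu_{q'}$, the expansion gives $\|a-b\|^2=\|d\|^2-2r\langle d,\nu_q+\nu_{q'}\rangle+r^2\|\nu_q+\nu_{q'}\|^2\ \ge\ 4r^2-\|d\|^2-r^2\|\nu_q-\nu_{q'}\|^2$, which falls short of $4r^2$ whenever $d\neq 0$; and no cleverer use of these inequalities can close the gap, because the data ``$d$ small, $\nu_q$ and $\nu_{q'}$ both orthogonal to $d$ but at a fixed angle to each other'' (possible for $n\ge 3$) satisfies all the pointwise inequalities while $\|a-b\|<2r$. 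Ruling out such configurations needs either a Lipschitz bound on $\nu$ -- which is essentially the $\C^{1,1}$ conclusion you are heading toward, hence circular -- or a topological/separation argument. The paper takes the latter route: it first gets $\C^1$ regularity from Theorem \ref{thm:main1} via Lemma \ref{lem:posreach1}, and then \emph{shrinks} $r$ and the neighborhood so that the opposite-side support balls lie in a region where $X$ is a graph separating its two sides (the tubular-neighborhood remark); their interiors, being connected, disjoint from $X$, and touching opposite sides, then cannot meet. Shrinking $r$ is harmless for the lemma and is what makes this step tractable; insisting on the full radius $r$ is what makes your proposed computation unworkable.

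A second, smaller gap: ``a ball slides freely in $K$'' requires a ball of radius $\rho$ through \emph{every} point of $\partial K$, not only through the points of $i(V)$. Your $K$ is an intersection of the balls $B_{q'}^*$, and its boundary contains spherical pieces far from $i(V)$ which no inscribed ball of radius $\rho$ need touch; the invariance-of-domain remark only identifies $\partial K$ with $i(V)$ near $p$, which does not help at those distant boundary points. The paper fixes this by replacing $K$ with $K'$, the union of all balls of radius $\rho$ contained in $K$: this $K'$ is still convex, still satisfies $i(V)\subset\partial K'$, and by construction a ball of radius $\rho$ slides freely in it. With that fix, and with a correct proof of the disjointness step, your argument becomes the paper's.
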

\begin{proof}
By Lemma \ref{lem:posreach1}, tangent  hyperplanes of $X$ vary continuously. So by Theorem \ref{thm:main1}, $X$ is $\C^1$. Consequently, after replacing $r$ with a smaller value, and $X$ with a smaller neighborhood of $p$, we may assume that the interiors of the balls which lie on one side of $X$ never intersect the interiors of the balls which lie on the other side  (e.g., this follows from the tubular neighborhood theorem \cite{spivak:v1}). Furthermore, since $X$ is $\C^1$, there exists a continuous unit normal vector field $n\colon X\to\S^{n-1}$. After a translation, rescaling, and possibly replacing $n$ by $-n$, we may assume that  $r=1$, $B=\B^n$, and $c(p):=p+n(p)=o$. Now for each $x\in X$ let $B_x$ be the support ball of $X$ at $x$ with center at $c(x)$ and $B'_x$ be the other support ball of $X$ through $x$. Since $c$ is continuous, and $o$ is in the interior of $B_p$, there is an open neighborhood $U$ of $p$ in $X$  so that $o$ is in the interior of $B_x$ for all $x\in U$; see the diagram on the left hand side of Figure \ref{fig:balls}. 

\begin{figure}[h]
   \centering
    \begin{overpic}[height=2.1in]{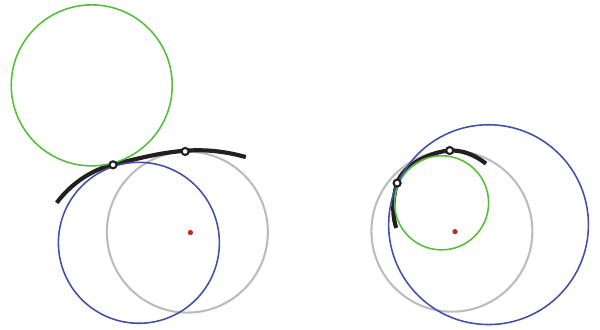} 
     \put(42,4){$\S^{n-1}$}
     \put(32,14){$o$}
     \put(30,32){$p$}
     \put(17,29){$x$}
     \put(3,6){$\partial B_x$}
     \put(-7,35){$\partial B'_x$}
     \put(95,3){$i(\partial B_x)$}
     \put(74,10){$i(\partial B'_x)$}
       \put(67,22){$i(x)$}
         \put(6,19){$U$}
    \end{overpic}
   \caption{}
   \label{fig:balls}
\end{figure}

Now consider $i(U)$ where $i\colon\R^{n}\smallsetminus\{o\}\to\R^n$ is inversion through $\partial B=\S^{n-1}$, i.e., $i(x):=x/\|x\|$. Note that if $S$ is any topological sphere which contains $o$ inside it, then $i$ maps the outside of $S$ to the inside of $i(S)$ (the ``inside" of $S$ is  the closure of the bounded component of $\R^n\smallsetminus S$, and the ``outside" of $S$ is the closure of the other  component). Thus, since $U$ lies outside $\partial B_x$, and $o$ is inside $\partial B_x$, it follows that $i(U)$ lies inside $i(\partial B_x)$; see the diagram on the right hand side of Figure \ref{fig:balls}. So through each point $i(x)$ of $i(U)$ there passes a ball bounded by $i(\partial B_x)$ which contains $i(U)$. Let $K$ be the intersection of all these balls. 
Then $K$ is a convex set and $i(U)\subset\partial K$.

Next recall that, by assumption, $B'_x$ lies outside $\partial B_y$ for all $x,y\in X$. Thus $i(B'_x)$ lies inside $i(\partial B_y)$, which in turn yields that $i(B'_x)\subset K$. Then since the radii of $i(B'_x)$ depend continuously on $x$, we may assume, after replacing $U$ by a possibly smaller open neighborhood of $p$, that the radii of $i(B'_x)$ are all 
greater than a uniform constant $r'>0$. Let $K'$ be the union of all balls of radius $r'$ in $K$. Then it is easy to check that $K'$ is a convex set with $i(U)\subset\partial K'$, and a ball (of radius $r'$) slides freely inside $K'$. So, by Lemma \ref{lem:hormander}, partial $K'$, and in particular $i(U)$ must be $\C^{1,1}$, which in turn yields that $U$ is $\C^{1,1}$, since $i$ is a local diffeomorphism, and finishes the proof.
\end{proof}

\begin{note}\label{note:lytchak}
In \cite[Prop. 1.4]{lytchak}, A. Lytchak proves that for a set $X$ of positive reach in a Riemannian manifold the following statements are equivalent: (i) $X$ is a $\C^{1,1}$ submanifold, (ii) $X$ is a topological manifold, and (iii) every tangent cone of $X$ is Euclidean. Furthermore it can be shown that for a topological hypersurface, having double positive support is equivalent to positive reach. Then, the second half of Theorem \ref{thm:main2} would follow from Lytchak's result. It appears however that our proof based on the inversion trick is more elementary or economical, which is why it has been retained in this work. See also \cite{bernig&lytchak} for other results of Bernig and Lytchak on tangent cones.
\end{note}

\subsection{} 
We close this section with one more regularity result for sets of positive reach which does not a priori assume that the boundary of the set is a hypersurface:

\begin{thm}\label{thm:posreach}
Let $K\subset\R^n$ be a set of positive reach. Suppose that the tangent cone of $K$ at each boundary point $p\in \partial K$ is a half-space. Then $\partial K$ is $\C^{1}$.
\end{thm}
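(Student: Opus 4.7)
My plan is to verify the hypotheses of Theorem \ref{thm:main2} for $X:=\partial K$. Write $T_pK=H_p^+$ for the half-space tangent cone at $p\in X$, with boundary hyperplane $H_p$ and outward unit normal $v_p$ (well-defined since $T_pK$ is a half-space). Positive support of $X$ is immediate from the reach $r$ of $K$: since $v_p\in N_pK$, we have $\pi_K(p+rv_p)=p$, so the interior of $B(p+rv_p,r)$ is disjoint from $K$ and hence from $X$, giving a uniform outer support ball through each point of $X$.

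The heart of the proof is establishing $T_pX=H_p$. For the inclusion $T_pX\subset H_p$: since $X\subset K$, any direction $u$ of a ray in $T_pX$ lies in $T_pK=H_p^+$, giving $\langle u,v_p\rangle\leq 0$. For the reverse inequality, take $q_i\in X$ with $q_i\to p$ and $u_i:=(q_i-p)/\|q_i-p\|\to u$, and consider the outer support balls $B_i:=B(q_i+rv_{q_i},r)$. Since $p\notin\inte B_i$, expanding $\|p-(q_i+rv_{q_i})\|\geq r$ yields $\langle u_i,v_{q_i}\rangle\geq -\|q_i-p\|/(2r)\to 0$. Passing to a subsequence with $v_{q_i}\to v'$, the ball $B(p+rv',r)$ is the Hausdorff limit of the $B_i$, so its interior is disjoint from $K$ and it supports $K$ at $p$ from outside. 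But $T_pK=H_p^+$ admits a unique outward normal, so $v'=v_p$, whence $\langle u,v_p\rangle\geq 0$; together with the opposite inequality this gives $u\in H_p$.

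The inclusion $H_p\subset T_pX$ requires a push-to-the-boundary argument. Given $u\in H_p$ of unit length, since $u\in H_p\subset T_pK$ there is a sequence $q_i\in K\setminus\{p\}$ with $q_i\to p$ and $(q_i-p)/\|q_i-p\|\to u$. If infinitely many $q_i\in X$ we are done; otherwise we may assume $q_i\in K^\circ$ and set $s_i^*:=\inf\{s>0:q_i+sv_p\notin K\}$. By Lemma \ref{lem:coneconverge} applied to $K$, for any $\epsilon>0$ and every $x\in K$ sufficiently close to $p$ one has $\langle x-p,v_p\rangle\leq\epsilon\|x-p\|$. Since $\langle q_i-p,v_p\rangle=o(\|q_i-p\|)$ (because $u\perp v_p$), substituting $x=q_i+sv_p$ in this inequality forces $s_i^*<\infty$ and $s_i^*=o(\|q_i-p\|)$. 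Hence $p_i':=q_i+s_i^*v_p\in\partial K=X$ satisfies $(p_i'-p)/\|p_i'-p\|\to u$, showing the ray in direction $u$ lies in $T_pX$.

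Flat tangent cones together with positive support now entail, via Lemma \ref{lem:posreach1}, that $p\mapsto T_pX$ varies continuously. To invoke Theorem \ref{thm:main2} we additionally need $X$ to be a topological hypersurface. By Lemma \ref{lem:projection}, orthogonal projection $\pi$ onto $H_p$ restricted to some neighborhood $U$ of $p$ in $X$ is open. Injectivity of $\pi|_U$ follows from positive support: two distinct $q_1,q_2\in U$ with $\pi(q_1)=\pi(q_2)$ would satisfy $q_2-q_1=tv_p$ with $0<|t|<r$, and after shrinking $U$ so that $v_{q_1}\approx v_p$ (by the continuity just established), the point $q_2$ would lie strictly inside $B(q_1+rv_{q_1},r)$, contradicting that this ball's interior is disjoint from $X$. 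So $\pi|_U$ is a homeomorphism, $X$ is locally homeomorphic to $\R^{n-1}$, and Theorem \ref{thm:main2} concludes that $X$ is $\C^1$. The main obstacle in this plan is the push-to-the-boundary step, whose exit-time bound $s_i^*=o(\|q_i-p\|)$ relies on a careful application of Lemma \ref{lem:coneconverge}.
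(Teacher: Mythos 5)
Your proposal is correct, but it follows a genuinely different route from the paper. The paper does not pass through Theorem \ref{thm:main2} at all: it shows the uniform support balls $B_p$ are unique (via Lemma \ref{lem:cone} and the half-space hypothesis) and vary continuously, then applies the inversion trick of Lemma \ref{lem:inversion} to a neighborhood $K'=K\cap\ol V$, proves $i(K')$ is convex, observes $T_{i(x)}i(K')=di_x(T_xK')$ is a half-space so that $i(K')$ has unique support hyperplanes along $i(U)$, and concludes by Lemma \ref{lem:convexity}; convexity of the inverted set supplies the hypersurface structure of $\partial K$ for free, which is precisely why the paper never needs to show that $T_p(\partial K)$ is a hyperplane or that $\partial K$ is a topological manifold. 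You instead verify the hypotheses of Theorem \ref{thm:main2} directly for $X=\partial K$: positive support from the reach, the identity $T_pX=H_p$ (whose nontrivial half is your push-to-the-boundary argument with the exit time $s_i^*$, and whose $o(\|q_i-p\|)$ bound does follow from Lemma \ref{lem:coneconverge} exactly as you indicate, restricting to $s$ small enough that $q_i+sv_p$ stays in the window where the cone estimate applies), continuity via Lemma \ref{lem:posreach1}, and the locally-homeomorphic-to-$\R^{n-1}$ property from openness (Lemma \ref{lem:projection}) plus injectivity of the projection. This buys you more explicit information ($T_p\partial K=\partial(T_pK)$ and a local graph structure over $H_p$) at the cost of the delicate exit-time estimate and of the standard positive-reach fact that $\xi_K(p+rv_p)=p$ for normal directions; note the latter can be avoided by arguing as the paper does, since any uniform support ball at $p$ must have direction $v_p$ by uniqueness. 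One small wording point: in your injectivity step the continuity you actually need is that of the outward direction $q\mapsto v_q$, which does not follow merely from continuity of the unoriented hyperplane $T_qX$, but it does follow from the same compactness-plus-uniqueness argument for support balls that you already used (and that appears in Lemma \ref{lem:posreach1}), so this is a matter of phrasing rather than a gap.
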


This  result may be regarded as  another generalization of the fact that the boundary of a convex set with unique support hyperplanes is a $\C^1$ hypersurface. Indeed we use the inversion trick used in Lemma \ref{lem:inversion} to reduce the proof to the convex case:

\begin{lem}\label{lem:convexity}
Let $K\subset\R^n$ be a convex set with interior points, $p\in\partial K$, and suppose that there exists an open neighborhood $U$ of $p$ in $\partial K$ such that through each point $q$ of $U$ there passes a unique support hyperplane of $K$. Then $U$ is $\C^1$.
\end{lem}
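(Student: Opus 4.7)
My plan is to reduce the lemma to the classical convex-analysis fact that a convex function on an open subset of $\R^{n-1}$ which is differentiable at every point is automatically $\C^1$ there. The whole statement then boils down to (i) representing $\partial K$ near $p$ as the graph of a convex function, and (ii) translating the unique-support-hyperplane hypothesis on $U$ into pointwise differentiability of that function.

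First, since $K$ is convex with nonempty interior and $p\in\partial K$ has a unique support hyperplane $H_p$, after a rigid motion I may assume $p=o$, $H_p=\R^{n-1}=\{x_n=0\}$, and $K$ lies locally in the halfspace $\{x_n\geq 0\}$. Uniqueness of $H_p$ rules out any ``vertical'' segment in $\partial K$ through $p$ (such a segment would force a one-parameter family of distinct support hyperplanes), and convexity together with the existence of an interior point makes the vertical line through each $q$ near $p$ meet $K$ in a nondegenerate compact interval. A standard convex-analysis argument then produces an open neighborhood $\Omega\subset\R^{n-1}$ of $o$ and a convex function $f\colon\Omega\to\R$ whose graph coincides with an open neighborhood of $p$ in $\partial K$. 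Shrinking $U$, I may assume $U=\graph(f|_{\Omega'})$ for some open $\Omega'\subset\Omega$.

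Next I translate uniqueness of support hyperplanes into differentiability. At each $q=(x,f(x))\in U$, the nonvertical support hyperplanes of $K$ at $q$ are in natural bijection with the subdifferential $\partial f(x)$: a nonvertical hyperplane supports $K$ at $q$ iff it is the graph of an affine minorant of $f$ which agrees with $f$ at $x$, iff its slope lies in $\partial f(x)$. Using upper semicontinuity of the support-hyperplane multifunction and the nonverticality of $H_p$, I may shrink $U$ so that no point of $U$ admits a vertical support hyperplane. The uniqueness hypothesis then forces $\partial f(x)=\{\nabla f(x)\}$, which for a convex $f$ is equivalent to differentiability of $f$ at $x$. Thus $f$ is differentiable at every point of $\Omega'$.

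Finally I invoke the classical fact (see, e.g., Rockafellar, \emph{Convex Analysis}, Theorem~25.5) that a convex function on an open set which is differentiable at every point of that set is automatically $\C^1$ there. Applied to $f|_{\Omega'}$, this yields $U=\graph(f|_{\Omega'})$ is $\C^1$, as desired. The step I expect to require the most care is the preparatory reduction, specifically ensuring that after possibly shrinking $U$ no support hyperplane at any point of $U$ is vertical, so that the graph representation and the subdifferential description are both uniformly available; once this reduction is in place, the conclusion is immediate from the two standard convex-analysis facts above.
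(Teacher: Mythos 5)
Your proposal is correct and takes essentially the same route as the paper's analytic proof: write $\partial K$ near $p$ as the graph of a convex function, identify the unique support hyperplane at each graph point with a unique subgradient (hence pointwise differentiability), and invoke the standard fact that a convex function differentiable at every point of an open set is automatically $\C^1$ (the paper cites Schneider's book where you cite Rockafellar). The extra bookkeeping about vertical support hyperplanes is harmless but not really needed, since the subdifferential of the graphing function is nonempty at each point, so the unique support hyperplane there is automatically nonvertical.
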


The above fact follows from Theorem \ref{thm:main2}, since for any convex set $K\subset\R^n$ with interior points, $\partial K$ is a hypersurface (e.g. see \cite[p. 3]{busemann}), and has positive support; further, by Lemma \ref{lem:convexsurface} below, $T_p(\partial K)=\partial (T_p K)$ which shows that $T_p(\partial K)$ is flat. We also include here a more analytic proof using basic properties of convex functions:

\begin{proof}
After a translation we may assume that $p=o$ and the support hyperplane of $K$ at $p$ coincides with the hyperplane of the first $(n-1)$ coordinates. Then, assuming it is sufficiently small, $U$ coincides with the graph of a convex function $f\colon \Omega\to\R$ for some convex neighborhood $\Omega$ of $o$ in $\R^{n-1}$. To show that $f$ is $\C^1$, it suffices to check that it is differentiable at each point of $\Omega$ \cite[Thm 1.5.2]{schneider:book}.
Further, $f$  is differentiable at a point if and only if it has only one subgradient at that point \cite[Thm 1.5.12]{schneider:book}. But the subgradient is unique at a point of $\Omega$ if and only if the epigraph of $f$ has a unique normal at the corresponding point \cite[Thm 1.5.11]{schneider:book}, which is  the case due to uniqueness of the support hyperplanes. 
\end{proof}

Now we are ready to prove the last result of this section:

\begin{proof}[Proof of Theorem \ref{thm:posreach}]
First recall that through each point $p\in\partial K$ there passes a support ball $B_p$ of radius $r$ for some uniform constant $r>0$.
Since $T_p K$ is a half-space, it follows that $B_p$ is  the unique support ball of $K$ of radius $r$ passing through $p$, since the interior of $B_p$ must be disjoint from $T_p K$ by Lemma \ref{lem:cone}. Further note that if $p_i\in\partial K$ are any sequence of points converging to $p$, then any limit point
of the sequence of balls $B_{p_i}$  will be a support  ball of $K$ at $p$ of radius $r$, which must be $B_p$ due to the uniqueness of $B_p$; therefore, $B_{p_i}$ converges to the support ball at $p$. So $p\mapsto B_p$ is continuous.

Now fix a point $p\in\partial K$. After a translation and rescaling we may assume that $B_p=\B^n$. In particular, the center of $B_p$ is $o$. By continuity of the support balls there exists an open ball $V$ centered at $p$ in $\R^n$ such if we set 
$$
U:=V\cap\partial K,
$$
then $B_x$ contains $o$ for every $x\in U$. Next consider the inversion $i$ with respect to $\partial B_p=\S^{n-1}$. We claim that if 
$$
K':=K\cap \ol V,
$$
where $\ol V$ denotes the closure of $V$, then $i(K')$ is convex. Indeed we show that through each point of $\partial (i(K'))=i(\partial K')$ there passes a sphere which contains $i(K')$.
To see this note that if $x\in i(\partial K')$, then either  $x\in i(\partial  V)$ or  $x\in i(V)$. If  $x\in i(\partial  V)$, then note that $i(\partial  V)$ bounds the ball $i(\ol V)$ which contains $i(K')$, since $\ol V$ contains $K'$. If, on the other hand, $x\in  i(V)$,  note that
$$
i(\partial K')\cap i(V) = i(\partial K'\cap V) = i(\partial K\cap V)=i(U),
$$
and recall that, as we already showed in the proof of Lemma \ref{lem:inversion},  through each point $i(x)$ of $i(U)$ there passes a sphere $i(\partial B_x)$ which contains $i(K)$ and therefore $i(K')$ inside it. So  we conclude that $i(K')$ is convex. 
Next note that for all $x\in \partial K'$
\begin{equation}\label{eq:tix}
T_{i(x)}i(K')=di_x(T_x K'),
\end{equation}
where $di_x$ is the differential map or the jacobian of $i$ at $x$, which has full rank for all $x\in\R^n\smallsetminus\{o\}$ which includes $U$. Furthermore, if $x\in U$, then $T_x K'=T_x K$ which is a half space, and so $di_x(T_x K')$ is a half-space as well, since $i_x$ has full rank. Thus \eqref{eq:tix} shows that $T_{i(x)}i(K')$ is a half-space for all $x\in U$. So $i(K')$ has a unique support hyperplane at all points of $i(U)$. But note that $i(U)$ is open in $i(\partial K')$;
because,  $i\colon \partial K'\to \partial K$ is a homeomorphism (and therefore must preserve open sets by the theorem on the invariance of domain). Now it follows from Lemma \ref{lem:convexity} that $i(U)$ is $\C^1$, which in turn yields that so is $U$, since $i$ is a local diffeomorphism. 
\end{proof}

\section{Tangent Cones of Real Analytic Hypersurfaces: Proof~of~Theorem~\ref{thm:main3}}\label{sec:tangents}

Theorem \ref{thm:main3} follows quickly from Theorem \ref{thm:conesemicone} proved in Section \ref{subsec:3cones} below which relates the three notions of tangent cones we discussed earlier in Section \ref{sec:prelim}. To prove Theorem \ref{thm:conesemicone}, we first  need to study in Section \ref{subsec:sign} the sign of real analytic functions in the neighborhood of their zero sets.

\subsection{}\label{subsec:sign}
We say that a function $f\colon\R^n\to\R$ changes sign at a point $p\in\R^n$ provided that in every open neighborhood  of $p$ in $\R^n$ there are points where $f>0$ and $f<0$. If $f$ changes sign at every point of a set $X\subset\R^n$, then we say that $f$ changes sign on $X$. The following observation shows that in certain cases we may assume that the function defining an analytic set changes sign on that set:

\begin{prop}\label{prop:sign}
Let  $X\subset\R^n$ be a hypersurface which coincides with the zero set $Z(f):=f^{-1}(0)$ of an analytic function $f\colon\R^n\to\R$. Then for every point $p\in X$ there exists an open neighborhood $U$ of $p$ in $\R^n$, and an analytic function $g\colon U\to\R$ such that $Z(g)=X\cap U$, and $g$ changes sign on $X\cap U$.
\end{prop}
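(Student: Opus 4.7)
The plan is to reduce $f$ to its ``square-free'' part locally, exploiting the fact that the local ring $\mathcal{O}_p$ of real analytic germs at $p$ is a Noetherian unique factorization domain. Factoring $f = u \cdot f_1^{a_1} \cdots f_k^{a_k}$ in $\mathcal{O}_p$, where $u$ is a unit and the $f_i$ are pairwise non-associate irreducible germs, and realizing this factorization on a small enough open neighborhood $U$ of $p$, I would set $g := f_1 f_2 \cdots f_k$. Then $Z(g) = \bigcup_i Z(f_i) = Z(f) \cap U = X \cap U$ automatically, so the zero-set condition is immediate.

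The main task is to show this $g$ changes sign on $X\cap U$. Divide the factors into \emph{fat} ones with $\dim_\R Z(f_i) = n-1$ and \emph{thin} ones with $\dim_\R Z(f_i) < n-1$. Since $X$ is a topological $(n-1)$-manifold, at least one factor must be fat; moreover, no portion of a thin $Z(f_j)$ can protrude outside the union of the fat zero sets, because that would destroy the local Euclidean structure of $X$. Call $q \in X \cap U$ \emph{generic} if $q$ lies on exactly one fat $Z(f_i)$, is a regular point of $Z(f_i)$ (so that $\nabla f_i(q) \neq 0$ by the implicit function theorem), and lies on no other $Z(f_j)$. By the standard structure theory of real analytic sets --- the singular locus of an irreducible analytic set is a proper analytic subset; distinct irreducible fat germs meet in a set of lower dimension; and an irreducible, hence square-free, germ has non-vanishing gradient on a dense open subset of its zero set --- the generic points form an open dense subset of $X \cap U$.

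At a generic $q \in Z(f_i)$, the implicit function theorem shows $f_i$ changes sign across its smooth hypersurface zero set, while the other $f_j$'s are nonzero with locally constant sign and cannot spoil this; hence $g$ attains both positive and negative values in every neighborhood of $q$. To spread this conclusion to all of $X \cap U$: given any $q \in X \cap U$ and any open $V \ni q$, density supplies a generic $q' \in V$, and both signs of $g$ are realized arbitrarily close to $q'$ and therefore inside $V$. So $g$ changes sign at $q$, as required.

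The principal obstacle I foresee is assembling the real-analytic structure facts needed for the density of generic points --- in particular, that an irreducible factor $f_i$ whose zero set has real dimension $n-1$ must have non-vanishing gradient on a Zariski-open dense part of $Z(f_i)$. In the complex setting the analogous statements are classical, but over $\R$ they require some care; I would invoke Łojasiewicz-type structure theorems for real analytic sets rather than reprove them in this paper.
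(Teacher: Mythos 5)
Your strategy (pass to the square-free part $g=f_1\cdots f_k$, prove the sign change at ``generic'' points, and spread it by density) is genuinely different from the paper's and can be made to work, but as written the density step has a real gap: the structure fact you propose to invoke is false in the real analytic category. An irreducible real analytic germ whose zero set has dimension $n-1$ need \emph{not} have nonvanishing gradient on a dense subset of its zero set. The Whitney umbrella $f(x,y,z)=x^2-zy^2$ is irreducible at the origin and $\dim Z(f)=2$, yet $\nabla f=(2x,-2zy,-y^2)$ vanishes identically on the $z$-axis, and for $z<0$ the zero set consists of that axis alone; so the regular points (which lie in the canopy over $z\geq 0$) are not dense in $Z(f)$. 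What your argument actually needs is the weaker, correct statement that the ``bad set'' --- the thin zero sets, the pairwise intersections $Z(f_i)\cap Z(f_j)$ of non-associate fat factors, and the loci $\{f_i=0,\ \nabla f_i=0\}$ --- has dimension at most $n-2$; since a closed set of dimension $\leq n-2$ has $\mathcal H^{n-1}$-measure zero, it cannot contain a nonempty open subset of the topological $(n-1)$-manifold $X\cap U$, and density of generic points \emph{in $X\cap U$} (not in each $Z(f_i)$) follows. But those two dimension bounds are precisely the nontrivial input: over $\R$ they are not off-the-shelf {\L}ojasiewicz facts, and they are exactly where a naive product could fail --- if two fat factors shared an $(n-1)$-dimensional piece of zero set, the product would not change sign there. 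Proving them requires either the real nullstellensatz-type lemma the paper isolates ($(g)=I(Z(g))$ for an irreducible germ with $(n-1)$-dimensional zero set), or a complexification argument (a real-irreducible germ is square-free over the complex numbers, non-associate real-irreducible germs stay coprime there, hence both the complex singular locus and the complex common zero set have complex dimension $\leq n-2$, and their real traces have real dimension $\leq n-2$). Without some such argument the steps ``the other $f_j$'s cannot spoil this'' and ``generic points are dense'' are unsupported.

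For comparison, the paper sidesteps the genericity/density machinery entirely: it chooses a \emph{single} irreducible factor $g$ with $\dim Z(g)=n-1$, uses $(g)=I(Z(g))$ to produce one regular point (so $g$ changes sign somewhere and $Z(g)$ separates $U$), and then a Jordan--Brouwer/connectedness argument shows that a proper closed subset of $X\cap U$ cannot separate $U$, forcing $Z(g)=X\cap U$; since $g$ is nonvanishing on the two components of $U\smallsetminus X$ and takes opposite signs on them, the sign change holds at every point of $X\cap U$. If you supply the complexification (or nullstellensatz) facts above, your route is a legitimate alternative; but those dimension estimates are the core of the matter and must be proved or precisely cited, not treated as an afterthought.
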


To prove this result, let us suppose that $p=o$,  and let $\C^\omega_o$ denote the ring of germs of analytic functions at $o$. It is well-known that $\C^\omega_o$ is Noetherian and is a unique factorization domain, e.g., see \cite[p. 148]{ZP}. 
So $f$  is the product of finitely may irreducible factors $f_i$ in $\C^\omega_o$ (abusing notation, we denote here the functions and their germs by the same symbols). Further, note that $Z(f)$ is the union of $Z(f_i)$, and $\dim(Z(f))=n-1$ by assumption. Thus, by {\L}ojasiewicz's structure theorem for real analytic varieties
 \cite[Thm. 6.3.3 (c)]{krantz&parks:book},
we may assume that there exists a factor $g$ of $f$ such that
\begin{equation}\label{eq:zg}
\dim \big(Z(g)\big)=n-1.
\end{equation} 
Indeed, each $Z(f_i)$  is an analytic set which admits a stratification into disjoint union of finitely many smooth manifolds or \emph{strata} by {\L}ojasiewicz's theorem. The maximum dimension of these strata then defines $\dim(Z(f_i))$. So, since $Z(f_i)\subset Z(f)$, $\dim(Z(f_i))\leq n-1$. Suppose towards a contradiction that $\dim(Z(f_i))<n-1$ for all $i$.
Then since each strata of $Z(f_i)$ is  smooth,  $\mathcal{H}^{n-1}(Z(f_i))=0$. This  in turn yields that $\mathcal{H}^{n-1}(Z(f))=0$ which is not possible.

Now \eqref{eq:zg} implies that 
 $g$ satisfies the equation \eqref{eq:null} below which is a type of real nullstellensatz \cite{BCR,risler,ruiz}.
Here $(g)\subset \C^\omega_o$ is the ideal generated by $g$, i.e., the collection of all germs $\phi g$ where $\phi\in \C_o^\omega$. Further $I(Z(g))\subset \C^\omega_o$ is the ideal of germs in $\C^\omega_o$ which vanish on $Z(g)$.

\begin{lem}
Let $g\in\C_o^\omega$ be irreducible, and suppose that $\dim(Z(g))=n-1$. Then 
\begin{equation}\label{eq:null}
(g)=I\big(Z(g)\big).
\end{equation}
\end{lem}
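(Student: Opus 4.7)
The inclusion $(g) \subseteq I(Z(g))$ is trivial. For the reverse inclusion I would appeal to Risler's real Nullstellensatz: for every ideal $\mathfrak{a} \subseteq \C^\omega_o$, one has $I(Z(\mathfrak{a})) = \sqrt[R]{\mathfrak{a}}$, the real radical. Because $\C^\omega_o$ is a UFD and $g$ is irreducible, $(g)$ is a prime ideal, and a prime ideal coincides with its real radical precisely when it is \emph{real}, i.e., when $\C^\omega_o/(g)$ is formally real (equivalently, whenever $\phi_1^2+\cdots+\phi_k^2\in(g)$, each $\phi_i\in(g)$). The proof therefore reduces to checking that $(g)$ is a real prime ideal.

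To verify realness, I would show that $g$ changes sign in every neighborhood of $o$. By the Weierstrass preparation theorem, after a linear change of coordinates one may assume $g$ is, up to a unit, a distinguished polynomial $g(x',x_n) = x_n^d + a_1(x') x_n^{d-1} + \cdots + a_d(x')$ with $a_i \in \C^\omega_{o'}$, and it remains irreducible in $\C^\omega_{o'}[x_n]$ by Gauss's lemma. The hypothesis $\dim Z(g)=n-1$ forces $g(x',\cdot)$ to possess a real root for $x'$ in a set of positive $(n-1)$-dimensional measure near $o'$; irreducibility prevents $g$ from being a perfect square, so its discriminant $\Delta(x')\in\C^\omega_{o'}$ is not identically zero. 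Consequently, on a dense open $\Omega \subseteq \R^{n-1}$ near $o'$ the polynomial $g(x',\cdot)$ has only simple real roots, each of which is a point of transverse sign change of $g$ along the $x_n$-direction.

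With sign change in hand, realness of $(g)$ follows by a short argument: any relation $\phi_1^2+\cdots+\phi_k^2 = \psi g$, evaluated on points arbitrarily near the transverse sign-change stratum where $g<0$, forces each $\phi_i$ to vanish on a dense open subset of $Z(g)$, hence on all of $Z(g)$, so each $\phi_i \in I(Z(g))$; a Noetherian descent on the order of divisibility by $g$ then yields $\phi_i \in (g)$. Risler's theorem now concludes $(g) = \sqrt[R]{(g)} = I(Z(g))$. The main obstacle I anticipate is precisely the Weierstrass / irreducibility step above --- specifically, ruling out that $g$ is a unit times a perfect square (a pathology compatible with $\dim Z(g)=n-1$ which would destroy sign change) and guaranteeing that simple real roots persist on a dense set of parameters $x'$ so that the sign-change locus is genuinely $(n-1)$-dimensional.
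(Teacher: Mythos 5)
Your overall strategy (Risler's analytic real Nullstellensatz plus showing that the prime $(g)$ is real) is legitimate and genuinely different from the paper's argument, which instead transplants the dimension-theoretic implication of \cite[Thm.\ 4.5.1]{BCR}: $(g)$ is a height-one prime, $\dim Z(g)=n-1$ forces $I(Z(g))$ to have height one, and a height-one ideal containing a height-one prime must equal it. But your proof has a genuine gap at exactly the point where the work has to happen: the final step. From $\phi_1^2+\cdots+\phi_k^2=\psi g$ you conclude that each $\phi_i$ vanishes on $Z(g)$ --- true, but trivially so (the left side vanishes on $Z(g)$ and is a sum of squares), and the sign-change locus you constructed plays no role in that deduction. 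You then pass from $\phi_i\in I(Z(g))$ to $\phi_i\in(g)$ by an unspecified ``Noetherian descent on the order of divisibility by $g$.'' That implication, $I(Z(g))\subseteq (g)$, \emph{is} the nontrivial content of the lemma you are proving, so invoking it here is circular; and no descent on divisibility order can supply it, since after dividing out the largest power of $g$ you are left with some $\phi_{i_0}$ vanishing on $Z(g)$ but not divisible by $g$, which is precisely the situation you must rule out and which your argument never confronts (compare $g=x^2+y^2$ in $\R^2$, where every function vanishes on $Z(g)=\{o\}$: only the hypothesis $\dim Z(g)=n-1$ can block this, and it must enter the algebra, not just the geometry).

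The repair is to use your Weierstrass setup where the conclusion is actually needed. Write $g=uP$ with $P\in\C^\omega_{o'}[x_n]$ an irreducible distinguished polynomial of degree $d$, and given $\phi$ vanishing on $Z(g)$ (or your $\phi_i$), divide: $\phi=qP+R$ with $\deg_{x_n}R<d$. If $R\not\equiv 0$, then since $P$ is irreducible and $\deg_{x_n}R<d$, the resultant $\rho=\mathrm{Res}_{x_n}(P,R)\in\C^\omega_{o'}$ is not identically zero; but $R$ vanishes at every real root of $P(x',\cdot)$, so $\rho(x')=0$ for every $x'$ in the projection of $Z(P)$, which by $\dim Z(g)=n-1$ (finiteness of the projection, so dimension is preserved, and a subanalytic set of full dimension has positive measure) is a set of positive measure near $o'$ --- forcing $\rho\equiv0$, a contradiction. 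This gives $I(Z(g))\subseteq(g)$ directly, and with it the realness of $(g)$ if you still want to route the statement through Risler; note also that your intermediate claim should be phrased as ``a positive-measure set of $x'$ for which $P(x',\cdot)$ has a simple real root'' rather than a dense open set on which all real roots are simple, since real roots need not exist off the projection of $Z(P)$. Alternatively, localizing at a regular point $q$ with $\partial g/\partial x_n(q)\neq0$ does give $g\mid\phi$ in $\C^\omega_q$ by division there, but you would then still owe an argument transferring divisibility from the germ at $q$ back to the germ at $o$, which is not automatic.
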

\begin{proof}
See  the proof of Theorem 4.5.1 in \cite[p. 95]{BCR}.  Although that theorem is stated in the ring of polynomials, as opposed to our present setting of $C_o^\omega$, the proof of the implication $(v)\Rightarrow (ii)$ in that result depends only on general algebraic properties of ideals in Noetherian rings and thus applies almost verbatim to the present setting. In particular, irreducibility of $(g)$ implies that $(g)$ is a prime ideal of height one. On the other hand $(g)\subset I(Z(g))$, and \eqref{eq:zg} implies that $I(Z(g))$ also has height one in $C_o^\omega$. Thus it follows that $(g)=I(Z(g))$.
\end{proof}

 Now it follows  that the gradient of $g$ cannot vanish  identically  on $Z(g)$; because otherwise, $\partial g/\partial x_i\in I(Z(g))=(g)$ which yields that $$
 \frac{\partial g}{\partial x_i}=\phi_i g
 $$ 
 for some  $\phi_i\in\C^\omega_o$. Consequently, by the product rule, all partial derivatives of  $g$ of any order must vanish at $o$. But then, since $g$ is analytic, it must vanish identically on $U$ which is not possible by \eqref{eq:zg}. So we may assume that $g$ has a regular point  in $Z(g)\subset X\cap U$. Then $g$ must assume different signs  on $U$, and therefore $U\smallsetminus Z(g)$ must be disconnected. This, we claim, implies that $Z(g)=X\cap U$ which would complete the proof. To establish this claim, first note that since $g$ is a factor of $f$, we already have 
 $$
 Z(g)\subset X\cap U.
 $$
  Further, by the generalized Jordan-Brouwer separation theorem for closed sets in $\R^n$ which are locally homeomorphic to $\R^{n-1}$, or more specifically Alexander duality, we may choose 
 $U$  so that $U\smallsetminus X$ has exactly two components, e.g., see \cite[8.15]{dold}. Using some elementary topology, one can then show that if $Z(g)$ is a proper subset of $X\cap U$, then it cannot separate $U$ which is a contradiction. It would then follow that 
 $$
 Z(g)=X\cap U
 $$
  which would complete the proof of Proposition \ref{prop:sign}. So all that remains is to check that if $X$ separates $U$ into a pair of components, and $A$ is any proper subset of $X\cap U$, then $A$ cannot separate $U$. To see this let $U_1$, $U_2$ be the two components of $U\smallsetminus X$, and set
$V := U_1 \cup U_2 \cup A'$,
where $A':=(X\cap U)\smallsetminus A$.
We show that $V$  is connected which is all we need.  To see this let $W\subset V$ be a nonempty set which is both open and closed in $V$.  Then 
 $W\not\subset A'$, for otherwise there would be an open set $O\subset \R^n$ with
$O\cap V =  W \subset A' $.  This would  imply that $O$ is disjoint from
both $U_1$ and $U_2$, which is impossible since $U_1 \cup U_2=U\smallsetminus X$
is dense in $U$.   We may assume then that $W$ meets $U_1$. Then, since $U_1$ is connected, $U_1\subset W$.  Further, since $W$ is
closed, it will contain the closure of $U_1$ in $V$ which is $U_1\cup A'$.
But $U_1\cup A'$ is  not open in $V$ (for any neighborhood
of a point of $A'$ meets both $U_1$ and $U_2$).  Thus $W\neq U_1\cup A'$, and therefore it must contain a point of $U_2$.
Then, since $U_2$ is connected,  $U_2 \subset W$, which implies that
$W = U_1 \cup U_2 \cup A =V$.  So $V$ is connected.

\subsection{}\label{subsec:3cones}
Now we proceed towards proving Theorem \ref{thm:conesemicone} below which shows, via Proposition \ref{prop:sign} above, that  
if the tangent cone of an analytic hypersurface is a hypersurface, then it is symmetric.
Let $U\subset\R^n$ be an open neighborhood of $o$, 
$f\colon U\to\R$ be a  $\C^{k\geq 1}$ function with $f(o)=0$, and suppose that $f$ does not vanish to order $k$  at $o$.  Then by Taylor's theorem
$$
f(x)=h_f(x)+r_f(x)
$$
where $h_f(x)$
is a nonzero homogenous polynomial of degree $m$, i.e.,
$$
h_f(\lambda x)=\lambda^m h_f(x),
$$
for every $\lambda\in\R$, and $r_f\colon\R^n\to\R$ is a
continuous function  which satisfies
$$
\lim_{x\to0}|x|^{-m}r_f(x)=0.
$$
 
Now recall that $Z(f):=f^{-1}(0)$,
and $\tilde T_o Z(f)$ denotes the symmetric tangent cone, i.e., the limit of all sequences of secant \emph{lines} through $o$ and $x_i\in Z(f)\smallsetminus\{o\}$ as $x_i\to o$. 
Also let $Z(h_f):=h_f^{-1}(0)$ be the zero set of $h_f$. Then we have:
\begin{lem}\label{lem:1}
$\tilde T_o Z(f)\subset Z(h_f)$. 
\end{lem}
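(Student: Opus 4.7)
The plan is to take a line $L \in \tilde T_o Z(f)$, pick a nonzero point on it, and show that point lies in $Z(h_f)$; since $Z(h_f)$ is invariant under scaling (as $h_f$ is homogeneous of degree $m$), the whole line $L$ will then lie in $Z(h_f)$. By definition of the symmetric tangent cone, $L$ being in $\tilde T_o Z(f)$ means there is a sequence $x_i \in Z(f) \smallsetminus \{o\}$ with $x_i \to o$ such that the lines through $o$ and $x_i$ converge to $L$. Writing $u_i := x_i / \|x_i\| \in \S^{n-1}$, we may pass to a subsequence (and possibly replace $u_i$ by $-u_i$, which is harmless since we are dealing with \emph{lines} rather than rays) so that $u_i \to u$ for some unit vector $u$ spanning $L$.

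The key identity comes from combining the Taylor decomposition $f = h_f + r_f$ with the homogeneity of $h_f$. Since $f(x_i) = 0$, we have $h_f(x_i) = -r_f(x_i)$, and homogeneity yields
\[
h_f(u_i) \;=\; \frac{h_f(x_i)}{\|x_i\|^m} \;=\; -\frac{r_f(x_i)}{\|x_i\|^m}.
\]
The right-hand side tends to $0$ by the remainder estimate $\lim_{x \to o} |x|^{-m} r_f(x) = 0$. Passing to the limit $i \to \infty$ and using continuity of the polynomial $h_f$, we get $h_f(u) = 0$, so $u \in Z(h_f)$. Homogeneity then gives $h_f(\lambda u) = \lambda^m h_f(u) = 0$ for all $\lambda \in \R$, which is precisely the statement that $L \subset Z(h_f)$.

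There is not really a hard step here; the only mild subtlety is being careful about the distinction between secant \emph{lines} and secant \emph{rays} when extracting a convergent subsequence of $u_i$ — one may have to pass to $\{u_i\}$ or $\{-u_i\}$ depending on which half-line contains infinitely many $x_i/\|x_i\|$. Once a convergent direction $u$ is selected, the argument is a one-line computation using the Taylor remainder bound together with the homogeneity of $h_f$.
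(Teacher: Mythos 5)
Your proof is correct and follows essentially the same route as the paper's: decompose $f = h_f + r_f$, evaluate at the secant points $x_i \in Z(f)$, divide by $\|x_i\|^m$ using the homogeneity of $h_f$ and the remainder estimate, and pass to the limit of the unit directions. The only difference is cosmetic (you normalize to unit vectors and argue line by line, while the paper writes the limit as $\lambda_i x_i \to v$ and rescales at the end), so nothing further is needed.
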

\begin{proof}
Suppose $v\in \tilde T_o Z(f)$. Then, it follows from Lemma \ref{lem:cone} that there are points $x_i\in Z(f)\smallsetminus\{o\}$, and numbers $\lambda_i\in\R$ such that $\lambda_ix_i\to v$. Since $x_i\in Z(f)$, 
$$
0=f(x_i)=h_f(x_i)+r_f(x_i)
$$
which yields that
$$
0=|x_i|^{-m} \big(h_f(x_i)+r_f(x_i)\big)=h_f\big(|x_i|^{-1}x_i\big)+|x_i|^{-m}r_f(x_i).
$$
Consequently
$$
0=\lim_{x_i\to o} h_f\big(|x_i|^{-1}x_i\big)+0=h_f\left(\lim_{x_i\to o} |x_i|^{-1}x_i\right).
$$
But,
$$
\lim_{x_i\to o} |x_i|^{-1}x_i=\lim_{x_i\to o} |\lambda_ix_i|^{-1}|\lambda_i|x_i=\pm |v|^{-1}v.
$$
So we conclude
$$
0=h_f\big(|v|^{-1}v\big)=|v|^{-m}h_f(v),
$$
which shows that $h_f(v)=0$, or $v\in Z(h_f)$. 
\end{proof}

In contrast to the above lemma, in general $Z(h_f)\not\subset \tilde T_o Z(f)$. Consider for instance the case where $f(x,y)=x(y^2+x^4)$. Then $Z(f)$ is just the $y$-axis, while $h_f(x,y)=xy^2$, and therefore $Z(h_f)$ is both the $x$-axis and the 
$y$-axis. So in order to have $Z(h_f)\subset  \tilde T_o Z(f)$, we need additional conditions, as given for instance by the next lemma. Recall that a function $f\colon\R^n\to\R$ \emph{changes sign} on a set $X\subset\R^n$, provided that for every point $x\in X$, and open neighborhood $U$ of $x$ in $\R^n$ there are points in $U$ where $f>0$ and $f<0$. Let $\tilde Z(h_f)\subset Z(h_f)$ be the set of points $p$ where $h_f$  changes sign at $p$. Then we show that:

\begin{lem}\label{lem:2}
 $\tilde Z(h_f)\subset \tilde T_o Z(f)$.
\end{lem}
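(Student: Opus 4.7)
The plan is to exploit the homogeneity of $h_f$ in order to transport sign changes from a neighborhood of $p$ down to neighborhoods of the points $tp$ for arbitrarily small $t>0$, and then to apply the Intermediate Value Theorem along suitably rescaled paths in order to locate zeros of $f$ pointing in the direction of $p$.

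Assume first that $p\neq o$, and let $U$ be an open ball of radius $\epsilon$ about $p$ with $o\notin U$. Since $h_f$ changes sign at $p$, we may pick $a,b\in U$ with $h_f(a)>0$ and $h_f(b)<0$; by homogeneity, $h_f(ta)=t^m h_f(a)>0$ and $h_f(tb)=t^m h_f(b)<0$ for every $t>0$. Using the remainder estimate $r_f(x)/|x|^m\to 0$ as $x\to o$, it follows that for all sufficiently small $t>0$ the values $f(ta)$ and $f(tb)$ carry the same signs as $h_f(ta)$ and $h_f(tb)$ respectively.

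Next, fix a continuous path $\gamma\colon[0,1]\to U$ joining $a$ to $b$, and consider the scaled path $t\gamma\subset B^n(tp,t\epsilon)$. The image $\gamma([0,1])$ is compact and bounded away from $o$, so the estimate on $r_f$ yields $r_f(t\gamma(s))/t^m\to 0$ uniformly in $s$ as $t\to 0$, and hence $f\circ(t\gamma)$ still changes sign on $[0,1]$ for all small $t$. The Intermediate Value Theorem then produces a point $x_t\in Z(f)\smallsetminus\{o\}$ lying on $t\gamma([0,1])$, which satisfies $|x_t-tp|\leq t\epsilon$, and therefore $|x_t/t-p|\leq\epsilon$.

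Finally, a diagonal extraction with $\epsilon_n\to 0$ and matching $t_n\to 0$ yields a sequence $x_n\in Z(f)\smallsetminus\{o\}$ with $x_n\to o$ and $x_n/t_n\to p$. Since $t_n>0$, the unit directions $x_n/|x_n|$ converge to $p/|p|$, which shows that the ray from $o$ through $p$ belongs to $T_oZ(f)\subset\tilde T_oZ(f)$. The degenerate case $p=o$ reduces to showing that $o$ is non-isolated in $Z(f)$, which follows from the same IVT argument applied to shrinking neighborhoods of $o$. The main delicacy is obtaining uniform control of the remainder $r_f$ along the rescaled path, which is precisely the reason for keeping $U$ bounded away from $o$.
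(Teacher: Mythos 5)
Your proof is correct, and it rests on the same two ingredients as the paper's argument -- the homogeneity of $h_f$ and the remainder estimate $r_f(x)=o(\|x\|^m)$, combined with an intermediate value argument -- but it is organized in the opposite logical direction. The paper argues by contradiction: if some $v\in\tilde Z(h_f)$ were outside $\tilde T_oZ(f)$, Lemma \ref{lem:cone} would give a cone neighborhood $\cone(U)\cap B$ meeting $Z(f)$ only at $o$, and the rescalings $f_\lambda(x)=\lambda^mf(\lambda^{-1}x)\to h_f(x)$ would force $f_\lambda$, hence $f$, to vanish inside that cone. You instead run the rescaling forwards: choosing $a,b$ near $p$ with $h_f(a)>0>h_f(b)$, checking that $f(ta)$ and $f(tb)$ inherit these signs for small $t$, and applying the IVT along the scaled paths $t\gamma$, you manufacture explicit zeros $x_t\in Z(f)\smallsetminus\{o\}$ with $x_t/t$ within $\epsilon$ of $p$; the diagonal extraction then exhibits a secant sequence whose rays converge to the ray through $p$. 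A payoff of this constructive version is that it directly yields the stronger inclusion $\tilde Z(h_f)\subset T_oZ(f)$ (ordinary, not merely symmetric, tangent cone), which is in fact the form cited in the proof of Theorem \ref{thm:conesemicone}. Two minor remarks: only the endpoint signs are needed for the IVT, so the uniform control of $r_f$ along the path, while true, is not essential; and in the degenerate case $p=o$ your sketch needs the connecting path to be chosen inside a punctured neighborhood of the origin (possible when $n\geq 2$) so that the zero produced is distinct from $o$ -- the paper's proof likewise implicitly treats only $v\neq o$, and in the intended application $o$ is never an isolated point of $Z(f)$, so this case is harmless.
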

\begin{proof}
Suppose, towards a contradiction, that there is $v\in \tilde Z(h_f)$ such that $v\not\in \tilde T_o Z(f)$. Then it follows from Lemma \ref{lem:cone} that there exists an open neighborhood $U$ of $v$ in $\R^n$ and an open ball $B$ centered at $o$ such that $\cone(U)\cap B\cap Z(f)=\{o\}$, where $\cone(U)$ is the set of all lines which pass through $o$ and points of $U$. So we have $f\neq 0$ on $\cone(U)\cap B\smallsetminus\{o\}$.
Consequently if we set
\begin{equation}\label{eq:flambda}
f_\lambda(x):=\lambda^mf(\lambda^{-1}x),
\end{equation}
then it follows that $f_\lambda\neq 0$ on $\cone(U)\cap B\smallsetminus\{o\}$ for $\lambda\geq 1$. But note that,
by homogeneity of $h_f$,
$$
f_\lambda(x)=\lambda^mh_f(\lambda^{-1}x)+\lambda^m r_f(\lambda^{-1}x)
=h_f(x)+\lambda^m r_f(\lambda^{-1}x),
$$
which yields that
\begin{equation}\label{eq:fp}
\lim_{\lambda\to \infty}f_\lambda(x)=h_f(x).
\end{equation}
Furthermore, by assumption, there are points in $U$ where $h_f>0$ and $h_f<0$. Consequently, for large $\lambda$, $f_\lambda$ must change sign on $U$ as well, which in turn implies that $f_\lambda=0$ at some point of $U$, and we have a contradiction.
\end{proof}

Now let  $T_oZ(f)\subset \tilde T_o Z(f)$ denote as usual the tangent cone of $Z(f)$, i.e.,  the limit of all sequences of secant \emph{rays} (as opposed to lines) which emanate from $o$ and pass through $x_i\in Z(f)\smallsetminus\{o\}$ as $x_i\to o$. 

\begin{thm}\label{thm:conesemicone}
Let $U\subset\R^n$ be an open neighborhood of $o$ and $f\colon U\to\R^n$ be a $\C^{k\geq 1}$ function with $f(o)=0$ which does not vanish to order $k$ at $o$. Suppose that $Z(f)$ is homeomorphic to $\R^{n-1}$, $f$ changes sign on $Z(f)$, and $T_oZ(f)$ is also a hypersurface. Then 
$$
\tilde T_oZ(f)=\tilde Z(h_f)=T_oZ(f).
$$
In particular, $T_oZ(f)$ is symmetric with respect to $o$, i.e., $T_oZ(f)=-T_oZ(f)$.
\end{thm}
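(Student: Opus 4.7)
The plan is to establish $\tilde Z(h_f)\subset T_oZ(f)$ and $T_oZ(f)\subset\tilde Z(h_f)$, which together give the equality $T_oZ(f)=\tilde Z(h_f)$. Since $\tilde Z(h_f)$ is symmetric under $x\mapsto -x$ (the zero set of a homogeneous polynomial is a cone through the origin, and sign-changes are preserved under negation), symmetry of $T_oZ(f)$ follows automatically, yielding $T_oZ(f)=\tilde T_oZ(f)=\tilde Z(h_f)$, which is the desired triple equality.

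The first inclusion is the easier one. Given $v\in\tilde Z(h_f)\smallsetminus\{o\}$ and a small open ball $B$ about $v$ not containing $o$, by definition of $\tilde Z(h_f)$ there are points $x^+, x^-\in B$ with $h_f(x^+)>0$ and $h_f(x^-)<0$. The uniform convergence $f_\lambda\to h_f$ on $\ol B$ provided by \eqref{eq:fp} ensures $f_\lambda(x^+)>0$ and $f_\lambda(x^-)<0$ for all sufficiently large $\lambda$. The intermediate value theorem applied to $f_\lambda$ along the segment from $x^-$ to $x^+$ in $B$ then produces a point of $Z(f_\lambda)$ in $B$, so $B\cap Z(f_\lambda)\neq\emptyset$ for all large $\lambda$. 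Corollary \ref{cor:outer} thus yields $v\in T_oZ(f)$.

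For the reverse inclusion $T_oZ(f)\subset\tilde Z(h_f)$, fix $v\in T_oZ(f)\smallsetminus\{o\}$. Since $T_oZ(f)$ is a hypersurface, the generalized Jordan--Brouwer theorem yields a small ball $B$ about $v$ with $B\smallsetminus T_oZ(f)=B_+\sqcup B_-$. Because $Z(f)$ is itself a hypersurface and $f$ changes sign on it, near each point of $Z(f)$ the complement splits locally into a piece where $f>0$ and a piece where $f<0$; under homothety by $\lambda$ the same holds for $Z(f_\lambda)=\lambda Z(f)$, with $f_\lambda$ of definite sign on each piece. Using Corollary \ref{cor:outer} to pick $\lambda_i\to\infty$ and $w_i\in Z(f_{\lambda_i})$ with $w_i\to v$, and combining Lemma \ref{lem:coneconverge} (one direction of Hausdorff convergence) with the hypersurface assumption on $T_oZ(f)$ (which furnishes the other), one argues that $Z(f_{\lambda_i})\cap B$ Hausdorff-converges to $T_oZ(f)\cap B$ and that the two sign-pieces near $w_i$ exhaust $B_+$ and $B_-$ in the limit. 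Passing to the limit $f_\lambda\to h_f$ then gives $h_f\ge 0$ on $B_+$ and $h_f\le 0$ on $B_-$ (after relabeling), and since $h_f$ is a nonzero polynomial it cannot vanish on any open set, so strict inequalities occur in each component. Hence $h_f$ changes sign at $v$, i.e., $v\in\tilde Z(h_f)$. The main obstacle is the local Hausdorff convergence step: Lemma \ref{lem:coneconverge} supplies only one direction of it, and the hypersurface structure of $T_oZ(f)$ combined with the two-sidedness of $Z(f)$ inherited from $f$ changing sign on $Z(f)$ is precisely what rules out degenerate behavior of $Z(f_{\lambda_i})$ inside $B$ and lets the two sides of $Z(f_{\lambda_i})$ be identified in the limit with $B_+$ and $B_-$.
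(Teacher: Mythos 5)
Your reduction to proving $T_oZ(f)=\tilde Z(h_f)$ and your intermediate-value argument for $\tilde Z(h_f)\subset T_oZ(f)$ are fine; the latter is essentially the content of Lemma \ref{lem:2} (in ray form), proved directly rather than by contradiction. The problem is the reverse inclusion, which is the heart of the theorem: there your write-up is a plan, not a proof. You assert that $Z(f_{\lambda_i})\cap B$ Hausdorff-converges to $T_oZ(f)\cap B$ and that ``the two sign-pieces near $w_i$ exhaust $B_+$ and $B_-$ in the limit,'' and then you yourself label exactly this step ``the main obstacle'' without supplying an argument. Nothing you cite delivers it: Lemma \ref{lem:coneconverge} gives only the outer inclusion \eqref{eq:lim}, and Corollary \ref{cor:outer} gives only subsequential approximation of individual points, not the inner half of a local Hausdorff convergence. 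More importantly, even granting convergence of the zero sets, the step you actually need concerns signs: you must show $f_\lambda$ is eventually of one fixed sign on (an open subset of) one side $B_+$ and of the opposite sign on $B_-$, with labels consistent as $\lambda\to\infty$. The purely local information you invoke (each $Z(f_{\lambda_i})$ is locally two-sided near $w_i$ with $f_{\lambda_i}$ of opposite signs on the two local pieces) does not by itself prevent the two sides of $T_oZ(f)$ near $v$ from lying, for each large $\lambda$, in the same global sign region of $f_\lambda$, nor the sign assignment from flipping with $\lambda$; in either case you would only conclude $h_f\ge 0$ (say) on all of $B\smallsetminus T_oZ(f)$, hence no sign change at $v$. (Also, that a small ball about $v$ is cut by the topological hypersurface $T_oZ(f)$ into exactly two pieces is itself not free.)

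The paper closes this gap by arguing globally, and this is precisely where the hypotheses your sketch underuses enter. Taking $U=\R^n$, the set $Z(f)_\lambda=f_\lambda^{-1}(0)$ is a closed set homeomorphic to $\R^{n-1}$, so the generalized Jordan--Brouwer theorem gives that $\R^n\smallsetminus Z(f)_\lambda$ has exactly two components, and since $f$ changes sign on $Z(f)$, $f_\lambda>0$ on one component and $f_\lambda<0$ on the other. By \eqref{eq:lim}, inside a fixed ball any point off $T_oZ(f)$ eventually avoids $Z(f)_\lambda$, and the two global components converge (within that ball) to the two components of $\R^n\smallsetminus T_oZ(f)$, the labels stabilizing, e.g.\ by testing at a single point where $h_f\neq 0$ and using \eqref{eq:fp}. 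Hence $h_f\ge 0$ on one component of $\R^n\smallsetminus T_oZ(f)$ and $h_f\le 0$ on the other; since the nonzero polynomial $h_f$ cannot vanish identically on an open set, $h_f$ changes sign at every point of $T_oZ(f)$, i.e.\ $T_oZ(f)\subset\tilde Z(h_f)$. Note that no Hausdorff convergence of the zero sets themselves is required. If you wish to keep your local formulation, you must still import this global two-component structure (or an equivalent sign-matching argument); as written, your proposal has a genuine gap at exactly the step on which the theorem turns.
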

\begin{proof}
For convenience we may assume that $U=\R^n$. Further note that, 
since $(T_o Z(f))^*=-T_o Z(f)$, we have
$$\tilde T_o Z(f)=T_oZ(f)\cup-T_oZ(f).$$
So it is enough to show that $T_oZ(f)=\tilde Z(h_f)$; because then $-T_oZ(f)=-\tilde Z(h_f)=\tilde Z(h_f)$ by homogeneity of $h_f$; consequently,  
$\tilde T_o Z(f)\subset\tilde Z(h_f)$, which in turn yields that $\tilde T_o Z(f)=\tilde Z(h_f)$ by Lemma \ref{lem:2}.

By Lemma \ref{lem:2}, $\tilde Z(h_f)\subset T_oZ(f)$. So it remains to show that $T_oZ(f)\subset\tilde Z(h_f)$, i.e., we have to check that
 $h_f$ changes sign on $T_oZ(f)$. 
To see this, for $\lambda\in\R^+$, let $Z(f)_\lambda:=Z(f)_{o,\lambda}$ be the set of points $\lambda x$ where $x\in Z(f)$. Recall that by Lemma 
\ref{lem:coneconverge}, for large $\lambda$ we have
\begin{equation}\label{eq:lim}
Z(f)_\lambda\cap B^n(o,r)\subset\big(T_oZ(f)+\epsilon \B^n\big)\cap B^n(o,r).
\end{equation}
Next note that, if $f_\lambda$ is given by \eqref{eq:flambda}, then 
$$
f^{-1}_\lambda(0)=\{x\mid\lambda^mf(\lambda^{-1} x)=0\}
=\{x\mid f(\lambda^{-1} x)= 0\}
=\{\lambda y\mid f(y)= 0\}
=Z(f)_\lambda.
$$
By the generalized Jordan-Brouwer separation theorem \cite[Sec. 8.5]{dold}, $\R^n\smallsetminus Z(f)_\lambda$ has precisely two components: $(\R^n\smallsetminus Z(f)_\lambda)^\pm$. We may suppose that $f_\lambda>0$ on $(\R^n\smallsetminus Z(f)_\lambda)^+$ and $f_\lambda<0$ on $(\R^n\smallsetminus Z(f)_\lambda)^-$. It follows from \eqref{eq:lim} that inside any ball $B^n(o,r)$  we have $(\R^n\smallsetminus Z(f)_\lambda)^\pm$ converging, with respect to the Hausdorff topology, to the components of $\R^n\smallsetminus T_oZ(f)$ which we denote by $(\R^n\smallsetminus T_oZ(f))^\pm$ respectively:
$$
\Big(\R^n\smallsetminus Z(f)_\lambda\Big)^\pm\cap B^n(o,r)\longrightarrow \Big(\R^n\smallsetminus T_oZ(f)\Big)^\pm\cap B^n(o,r).
$$
In particular, if $x\in  (\R^n\smallsetminus T_oZ(f))^+$, then eventually (as $\lambda$ grows large) $x\in  (\R^n\smallsetminus Z(f)_\lambda)^+$, and thus $f_\lambda(x)>0$. Consequently,  $h_f(x)\geq 0$ by \eqref{eq:fp}, and we conclude that $h_f\geq 0$ on  $(\R^n\smallsetminus T_oZ(f))^+$. Similarly, we have 
$h_f\leq 0$ on  $(\R^n\smallsetminus T_oZ(f))^-$. So, since $h_f$ cannot vanish identically on any open set, it follows that $h_f$ changes sign on $T_oZ(f)$ as desired.
\end{proof}

\subsection{ }
Theorem \ref{thm:conesemicone} together with Proposition \ref{prop:sign} immediately yield:

\begin{cor}\label{cor:thmprop}
Let $X\subset\R^n$ be a real analytic hypersurface, and suppose that $T_p X$ is also a hypersurface for all $p\in X$, then each $T_p X$ is symmetric with respect to $p$, i.e., $T_p X=(T_p X)^*$. \qed
\end{cor}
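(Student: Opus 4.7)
The plan is to deduce the corollary directly by combining Proposition \ref{prop:sign} with Theorem \ref{thm:conesemicone}. First I would fix $p\in X$ and, after a translation, assume $p=o$. The statement is purely local at $o$, and $T_o X$ depends only on the germ of $X$ at $o$, so I am free to replace $X$ by $X\cap U$ for any open neighborhood $U$ of $o$ in $\R^n$.

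Next I would apply Proposition \ref{prop:sign} to produce an open neighborhood $U$ of $o$ and an analytic function $g\colon U\to\R$ with $Z(g)=X\cap U$ such that $g$ changes sign on $X\cap U$. Since $X$ is a hypersurface, after shrinking $U$ if necessary I may assume $X\cap U$ is homeomorphic to $\R^{n-1}$. Also, because $Z(g)=X\cap U$ is a hypersurface and not all of $U$, the analytic function $g$ does not vanish identically near $o$; hence its Taylor expansion at $o$ has a smallest nonzero homogeneous term, so there exists $k\geq 1$ such that $g$ does not vanish to order $k$ at $o$. This puts $g$ in exactly the setting assumed in Theorem \ref{thm:conesemicone}.

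Finally, the hypothesis of the corollary says that $T_o X=T_o Z(g)$ is a hypersurface, which is the remaining assumption needed in Theorem \ref{thm:conesemicone}. Invoking that theorem, $T_o Z(g)$ is symmetric with respect to $o$, i.e., $T_o X=-T_o X=(T_o X)^*$. Since $p$ was arbitrary, the corollary follows.

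There is no real obstacle here beyond bookkeeping: the entire content is already packaged in the two auxiliary results, and the only thing to check is that the local replacement of $f$ by the sign-changing factor $g$ supplied by Proposition \ref{prop:sign} does not alter the tangent cone (which is clear, since $Z(g)=X\cap U$) and that $g$ has finite vanishing order at $o$ (which is immediate from analyticity and the fact that $Z(g)$ is a proper subset of $U$).
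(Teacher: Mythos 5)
Your proposal is correct and follows exactly the route the paper intends: the paper derives Corollary \ref{cor:thmprop} by combining Proposition \ref{prop:sign} with Theorem \ref{thm:conesemicone}, which is precisely what you do, with the only added (and correct) bookkeeping being the verification that $g$ has finite vanishing order and that replacing $X$ by $X\cap U=Z(g)$ does not change $T_pX$.
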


Equipped with  the last observation, we are now ready to prove the main result of this section:

\begin{proof}[Proof of Theorem \ref{thm:main3}]
Since through each point $p$ of $X$ there passes a support ball $B$, it follows from Lemma \ref{lem:cone} that each $T_p X$ must lie in a half-space  whose boundary $H_p$ is tangent to $B$ at $p$. Then Corollary \ref{cor:thmprop} implies that  $T_p X\subset H_p$. But $T_p X$ is open in $H$, by the theorem on the invariance of domain,
and $T_pX$ is closed in $H$ since it is closed in $\R^n$ by Lemma \ref{lem:cone}. Thus $T_p X=H_p$. So we conclude that each $T_p X$ is flat. Then, by Lemma \ref{lem:posreach1}, $T_p X$ depends continuously on $p$, and we may apply Theorem \ref{thm:main1} to conclude that $X$ is $\C^1$. 

Next  note that if $X$ is any convex hypersurface, then it has positive support; further, its tangent cones are automatically hypersurfaces, by Lemma \ref{lem:convexsurface} below. So by the above paragraph $X$ is $\C^1$. 
\end{proof}

\begin{lem}\label{lem:convexsurface}
Let $K\subset\R^n$ be a closed convex set with interior points, and $p\in\partial K$. Then $T_p K$ is a convex set with interior points, and 
$$\partial (T_p K)=T_p(\partial K).$$ In particular $T_p(\partial K)$ is a hypersurface.
\end{lem}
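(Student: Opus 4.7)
My plan is to reduce the boundary equality to a local epigraph representation of $K$ at $p$. After translating so $p = o$, the structural claims about $T_o K$ follow from Corollary \ref{cor:outer}: since $K$ is convex with $o \in K$, the scalings $K_{o,\lambda} = \lambda K$ are convex and monotone increasing in $\lambda > 0$, so $T_o K = \overline{\bigcup_{\lambda > 0} \lambda K}$ is a closed convex cone with apex $o$ (the closure of an increasing union of convex sets is convex); any $q \in \inte(K)$ with $B^n(q, r) \subset K$ then satisfies $B^n(q, r) \subset T_o K$, giving interior points.

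For $\partial(T_o K) = T_o(\partial K)$, I would use that by convexity (a supporting hyperplane at $o$, rotated to $\R^{n-1} \times \{0\}$) there is a neighborhood $U$ of $o$ on which $K \cap U$ coincides with the epigraph $\{(x, y) \in \R^{n-1} \times \R : y \geq f(x)\} \cap U$ for some continuous convex $f \colon \R^{n-1} \to \R$ with $f(o) = 0$ and $f \geq 0$. Convexity plus $f(o) = 0$ makes $t \mapsto f(tx)/t$ monotone nondecreasing in $t > 0$, so
$$
f^{\mathrm{tan}}(x) := \lim_{t \to 0^+} \frac{f(tx)}{t} = \inf_{t > 0} \frac{f(tx)}{t}
$$
defines a sublinear, hence continuous, function on $\R^{n-1}$. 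The identity $\lambda K \cap \lambda U = \{y \geq \lambda f(x/\lambda)\} \cap \lambda U$ combined with the monotone pointwise convergence $\lambda f(x/\lambda) \downarrow f^{\mathrm{tan}}(x)$ as $\lambda \to \infty$ identifies $T_o K$ globally with the epigraph $\{(x, y) : y \geq f^{\mathrm{tan}}(x)\}$, so $\partial(T_o K) = \graph(f^{\mathrm{tan}})$. A parallel direct computation identifies $T_o(\partial K) = T_o(\graph(f)) = \graph(f^{\mathrm{tan}})$: any limit $(u, s) = \lim_i \lambda_i (x_i, f(x_i))$ with $x_i \to o$ forces $s = f^{\mathrm{tan}}(u)$ using the uniform-on-compacts convergence (via Dini's theorem) of $f(tu)/t \to f^{\mathrm{tan}}(u)$ as $t \to 0^+$. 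Hence $T_o(\partial K) = \partial(T_o K) = \graph(f^{\mathrm{tan}})$, which is a topological hypersurface because $f^{\mathrm{tan}}$ is continuous on $\R^{n-1}$.

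The main obstacle I anticipate is the rigorous identification of $T_o K$ with the epigraph of $f^{\mathrm{tan}}$ globally (not just inside $U$): one needs to verify that contributions to the outer limit $\limsup_\lambda \lambda K$ in any fixed ball around $o$ come only from $K \cap U$ (points of $K \setminus U$ get pushed out to infinity under $\lambda \to \infty$), and that the monotone pointwise convergence of $\lambda f(x/\lambda)$ to $f^{\mathrm{tan}}(x)$ is uniform on compact sets so that it passes cleanly to outer limits of epigraphs. Once these technicalities are handled, all four claims of the lemma follow.
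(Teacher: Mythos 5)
Your route is genuinely different from the paper's (which never introduces graph coordinates: it notes $T_pK=\bigcup_{\lambda\ge 0}\lambda K$ is an increasing union since $p\in K$, gets interior points from $K\subset T_pK$, gets $T_p(\partial K)\subset T_pK$ from $\partial K\subset T_pK$, and then rules out rays of $T_p(\partial K)$ in $\inte (T_pK)$ by a small cone argument, quoting the standard fact that boundaries of convex sets with interior are hypersurfaces). Your plan of identifying $T_oK$ and $T_o(\partial K)$ with the epigraph and graph of the sublinear function $f^{\mathrm{tan}}$ could be made to work, but as written it has a genuine gap at the very first step: it is \emph{not} true that $K\cap U$ is the epigraph of a finite continuous convex $f\colon\R^{n-1}\to\R$ when the vertical axis is chosen normal to a supporting hyperplane at $o$. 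Take $K=[0,1]^2\subset\R^2$, $o$ the corner at the origin, and the supporting hyperplane $\{y=0\}$: near $o$ the set $K$ lies over only the half-line $x\ge 0$, and no real-valued $f$ with $f(0)=0$ can satisfy $\{y\ge f(x)\}\cap U=K\cap U$, since continuity forces $f(x)$ to be small for small $x<0$, producing points of the epigraph near $o$ that are not in $K$. The failure occurs precisely when the inward normal of the chosen supporting hyperplane is not an interior direction of $T_oK$ (in the example $T_oK$ is the first quadrant and the inward normal lies on its boundary), so the step you rely on breaks exactly at the nonsmooth points the lemma is about.

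The repair is standard but must be made: choose the graphing direction $u$ pointing from $o$ into $\inte K$ (e.g.\ toward any fixed interior point); then near $o$ one does get $K\cap U=\{(x,y):y\ge f(x)\}\cap U$ and $\partial K\cap U=\graph(f)\cap U$ for a convex, hence continuous and locally Lipschitz, $f$ defined on a neighborhood of $o$ in $u^\perp$ with $f(o)=0$ (the condition $f\ge 0$ must be dropped, but you never use it -- only $f(o)=0$ and convexity enter the monotonicity of $t\mapsto f(tx)/t$, and local Lipschitzness is what makes $f^{\mathrm{tan}}$ finite, sublinear and continuous). Your second worry, about globalizing from $K\cap U$ to $K$, is easier than you suggest: tangent cones depend only on the germ of the set at $o$ (the defining sequences $p_i\to o$ eventually lie in $U$), so $T_oK=T_o(K\cap U)$ and $T_o(\partial K)=T_o(\partial K\cap U)$ directly from the definition, and the Dini/uniform-convergence identification with the epigraph and graph of $f^{\mathrm{tan}}$ then goes through; since these agree near $o$ and all three sets are cones, they agree globally. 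With these corrections your argument is complete, and it even yields slightly more than the paper's (an explicit description of $T_oK$ as the epigraph of the directional-derivative function), at the cost of more machinery than the paper's short synthetic proof.
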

\begin{proof}
We may suppose that $p=o$. Then it follows from Lemma \ref{lem:coneconverge} that
$$
T_p K=\bigcup_{\lambda\geq 0} \lambda K,
$$
once we note that, since $K$ is convex, $\lambda_1K\subset\lambda_2 K$ whenever $\lambda_1\leq \lambda_1$. Thus $T_p K$ is convex, and obviously has interior points since $K\subset T_p K$. This inclusion also shows that $\partial K\subset T_p K$, since tangent cones are always closed, by Lemma \ref{lem:cone}, and so 
$$
T_p(\partial K)\subset T_p(T_pK)=T_p K.
$$
Now suppose towards a contradiction that $T_p(\partial K)$ contains a ray $\ell$ which lies in the interior of $T_p K$. Then, since $T_p K$ is convex, there exists a cone $C$ about $\ell$ which lies in $T_p K$. Now since $\lambda C=C$, it follows that $C\cap B\subset K$ for some ball $B$ centered at $p$. In particular, after making $C$ smaller, we may assume that $C\cap B$ intersects $\partial K$ only at $p$. Hence $\ell$ cannot belong to $T_p(\partial K)$ by Lemma \ref{lem:cone}. So we conclude that $T_p(\partial K)=\partial(T_p K)$.
\end{proof}

\subsection{}\label{subsec:cusp}
Here we show that 
when $n=2$ in Theorem \ref{thm:main3}, it is not necessary to assume that the hypersurface $X$ have positive support. To see this let
 $\Gamma:=X\subset\R^2$ be a real analytic hypersurface or \emph{simple curve}, i.e., suppose that each point $p\in\Gamma$ has an open neighborhood $U$ homeomorphic to $\R$. Then we call the closure of each component of $U\smallsetminus \{p\}$ in $U$, which we denote by $b_1$ and $b_2$,  a \emph{half-branch} of $\Gamma$ at $p$. By the ``curve selection lemma" \cite{milnor:singular},  which also holds for semianalytic sets \cite[Prop. 2.2]{bv}, there exist $\C^1$ curves $\gamma_i \colon[0,1)\to b_i$ with $\gamma_i(0)=p$ such that $\gamma_i'\neq 0$; see \cite[p. 956]{OW}.  Thus each half-branch $b_i$ has a well-defined tangent ray $\ell_i:=\{p+t(\gamma_i)'_+(0)\mid t\geq 0\}$ emanating from $p$, where $(\gamma_i)'_+$ denotes the right hand derivative. It follows then from Corollary \ref{cor:outer} that $T_p\Gamma=\ell_1\cup \ell_2$. If $T_p \Gamma$ consists of only one ray, i.e., $\ell_1=\ell_2$, then we say that $\Gamma$ has a \emph{cusp} at $p$. 
Otherwise, $T_p\Gamma$ is a simple curve. Consequently, by Corollary \ref{cor:thmprop}, $T_p \Gamma$ must be flat, i.e, $\dir(\ell_1)=-\dir(\ell_2)$ which in turn yields that $U=b_1\cup b_2$ is $\C^1$. So we obtain the following special case of Theorem \ref{thm:main3} for $n=2$:

\begin{cor}\label{cor:cusp}
Let $\Gamma\subset\R^2$ be a  real analytic  simple curve. Then either $\Gamma$ has a cusp, or else it is $\C^1$.
\qed
\end{cor}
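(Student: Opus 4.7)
The plan is to mirror the dichotomy built into the definition of cusp. Fix $p\in\Gamma$ and take a neighborhood $U$ of $p$ homeomorphic to $\R$, so that $U\smallsetminus\{p\}$ has two components whose closures in $U$ give the half-branches $b_1,b_2$. The first step is to apply the curve selection lemma to the real analytic set $\Gamma$ on each half-branch: this produces $\C^1$ parametrizations $\gamma_i\colon[0,1)\to b_i$ with $\gamma_i(0)=p$ and $(\gamma_i)'_+(0)\neq 0$. Each half-branch then has a well-defined tangent ray $\ell_i$ at $p$ with direction $(\gamma_i)'_+(0)/\|(\gamma_i)'_+(0)\|$.

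The second step is to identify $T_p\Gamma=\ell_1\cup\ell_2$. This follows from Corollary \ref{cor:outer}, since any sequence $q_j\in\Gamma\smallsetminus\{p\}$ converging to $p$ eventually lies in one of the half-branches, and the secant rays from $p$ through points of $b_i$ parametrized by $\gamma_i$ converge to $\ell_i$ by the mean value theorem applied to $\gamma_i$. Now one of two things occurs. Either $\ell_1=\ell_2$, in which case $\Gamma$ has a cusp at $p$ by definition, and there is nothing more to prove. Otherwise $\ell_1\neq\ell_2$, so $T_p\Gamma$ is itself homeomorphic to $\R$, i.e.\ a hypersurface in $\R^2$.

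In this second case, invoke Corollary \ref{cor:thmprop}: since $\Gamma$ is real analytic and $T_p\Gamma$ is a hypersurface, $T_p\Gamma$ must be symmetric about $p$. Symmetry of the two-ray set $\ell_1\cup\ell_2$ forces $\dir(\ell_2)=-\dir(\ell_1)$, so $T_p\Gamma$ is a line through $p$. Finally, concatenate: define $\gamma\colon(-\epsilon,\epsilon)\to U$ by $\gamma(t)=\gamma_1(t)$ for $t\geq 0$ and $\gamma(t)=\gamma_2(-t)$ for $t\leq 0$ (after rescaling so the two half-parametrizations have compatible speeds at $p$, which is possible exactly because their tangent directions are antipodal). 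Then $\gamma$ is $\C^1$ on a neighborhood of $0$, exhibiting $U$ as a $\C^1$ arc through $p$. Since $p$ was arbitrary among non-cusp points, $\Gamma$ is $\C^1$ wherever it fails to have a cusp.

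The main technical input is the curve selection lemma, which is what allows us to bypass the positive support hypothesis used for $n\geq 3$ in Theorem \ref{thm:main3}: in the plane, analytic half-branches automatically carry a one-sided $\C^1$ tangent, so the only way for $T_p\Gamma$ to fail to be a hypersurface is the collapse $\ell_1=\ell_2$, and that collapse is precisely a cusp. No real obstacle remains beyond verifying the concatenation is $\C^1$, which is immediate from the antipodal direction condition.
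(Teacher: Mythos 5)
Your proposal is correct and follows essentially the same route as the paper: decompose a neighborhood into two half-branches, use the curve selection lemma to get one-sided $\C^1$ tangent rays $\ell_1,\ell_2$ with $T_p\Gamma=\ell_1\cup\ell_2$, and apply Corollary \ref{cor:thmprop} to force $\dir(\ell_2)=-\dir(\ell_1)$ at any non-cusp point, whence the curve is $\C^1$ there. The only difference is that you spell out the concatenation of the two half-parametrizations, which the paper leaves implicit.
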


When, on the other hand,  $n\geq 3$ in Theorem \ref{thm:main3}, the positive support assumption cannot in general be abandoned (Example \ref{ex:analytic}). Further, note that the above corollary implies that if the tangent cones of a simple curve are  simple curves, then they must be flat. This again does not generalize to higher dimensions (Example \ref{ex:fermat}).
Finally we should mention that Corollary \ref{cor:cusp}  also follows from  classical resolution of  singularities;  see  Appendix \ref{sec:appendix}, specifically Corollary \ref{cor:appendix}.

\section{Regularity of Real  Algebraic Convex Hypersurfaces:\\Proof of Theorem \ref{thm:main4}}
\subsection{} First we show that $X$ in Theorem \ref{thm:main4} is an entire graph. To this end we need to employ the notion of the \emph{recession cone}  \cite{rockafellar} of a closed convex set $K\subset\R^n$ which is defined as
$$
\rc(K):=\{x\in\R^n\mid K+x\subset K\}.
$$
Further, let  $\nc(K)$ be the \emph{normal cone} \cite{rockafellar} of $K$ which is defined as the set of all outward normals  to support hyperplane of $K$. The following observation is implicit in  \cite[Thm. 2]{wu:spherical}, see also \cite[Prop. 3.1]{ghomi:deformation}.

\begin{lem}[Wu \cite{wu:spherical}]
 If $K\subset\R^n$ is a closed convex set with interior points and boundary $\partial K$  homeomorphic to $\R^{n-1}$, then
$
\rc(K)\cap\S^{n-1}\cap(-\nc(K))\neq\emptyset. 
$\qed
\end{lem}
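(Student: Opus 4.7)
My plan is to find a unit recession direction $u\in\rc(K)\cap\S^{n-1}$ for which $\langle\cdot,u\rangle$ attains its infimum on $K$; any minimizer $p\in\partial K$ will then have $u$ as an inward unit normal, so $u\in-\nc(K)$ as required.

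Assume first that $K$ contains no line, so that $C:=\rc(K)$ is a nonzero pointed closed convex cone, and by pointedness its dual $C^\vee:=\{v:\langle z,v\rangle\geq 0\text{ for all }z\in C\}$ has nonempty interior. The crux is the purely algebraic assertion
\[
C\cap\inte(C^\vee)\neq\emptyset,
\]
which I would establish by contradiction. If $C$ and the open cone $\inte(C^\vee)$ were disjoint, Hahn--Banach separation would yield a nonzero $\phi$ with $\langle z,\phi\rangle\leq 0$ on $C$ and $\langle v,\phi\rangle\geq 0$ on $C^\vee$; the first inclusion says $-\phi\in C^\vee$ and the second, together with $(C^\vee)^\vee=C$, says $\phi\in C$, so pairing these against each other gives $\|\phi\|^2\leq 0$, a contradiction. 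For any unit $u$ in the intersection one has $\langle y,u\rangle>0$ for every $y\in C\setminus\{0\}$, so the level sets $\{x\in K:\langle x,u\rangle\leq c\}$ have trivial recession cone and are therefore compact; hence $\langle\cdot,u\rangle$ attains its minimum on $K$ at some $p\in\partial K$, at which $u$ is an inward unit normal.

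When $K$ contains lines, let $L$ be the lineality space of $\rc(K)$ and decompose $K=L+K_0$ with $K_0:=K\cap L^\perp$. The boundary then splits as $\partial K = L + \partial K_0$, with $\partial K_0$ taken in $L^\perp$, so the hypothesis $\partial K\cong\R^{n-1}$ rules out $K_0$ being bounded---a spherical factor would either disconnect $\partial K$ (when $\dim L^\perp=1$) or give it nontrivial homology---and hence forces $K_0$ to be an unbounded closed convex set in $L^\perp$ with nonempty interior and no line. Applying the pointed case to $K_0$ inside $L^\perp$ produces a unit $u\in L^\perp\cap\rc(K_0)\subset\rc(K)$ at which $\langle\cdot,u\rangle$ attains its minimum on $K_0$; since $u\perp L$ makes the functional constant on $L$-fibers, the same $u$ realizes the infimum of $\langle\cdot,u\rangle$ on $K=L+K_0$, placing it in $-\nc(K)$. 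The only substantive obstacle is the separation step $C\cap\inte(C^\vee)\neq\emptyset$ in the pointed case; the lineality reduction is routine recession-cone bookkeeping.
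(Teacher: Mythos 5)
Your proof is correct. Note that the paper does not actually prove this lemma: it is stated with a \qed and attributed to H.~Wu, the text saying only that the statement is implicit in \cite[Thm.~2]{wu:spherical} (Wu's analysis of spherical images of complete convex hypersurfaces), see also \cite[Prop.~3.1]{ghomi:deformation}. So your argument is a genuinely different, self-contained route through finite-dimensional convexity. Its crux, that a nonzero pointed closed convex cone $C$ meets the interior of its dual cone, is true, and your separation argument is sound: disjointness from the open convex set $\inte(C^\vee)$ gives a separating $\phi\neq 0$ with separation constant $0$ (both sides being cones), whence $-\phi\in C^\vee$ and, using that $C^\vee$ is the closure of its nonempty interior together with the bipolar identity $(C^\vee)^\vee=C$, also $\phi\in C$, forcing $\|\phi\|^2\leq 0$. (An even more elementary variant: $0\notin\conv(C\cap\S^{n-1})$ by pointedness, and the nearest point $u$ of this compact convex set to the origin lies in $C$ and satisfies $\l z,u\r>0$ for all $z\in C\smallsetminus\{0\}$.) The remaining steps — compactness of the sublevel sets from trivial recession cone, attainment of the minimum at a boundary point giving $u\in-\nc(K)$, the lineality decomposition $K=L+K_0$ with $\partial K=L+\partial K_0$, and the topological exclusion of bounded $K_0$ (disconnection when $\dim L^\perp=1$, nontrivial homology of $\R^k\times\S^{m-1}$ otherwise) — are all correct. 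A few points are left implicit but are routine: in the pointed case $\rc(K)\neq\{0\}$ needs the observation that $\partial K\cong\R^{n-1}$ forces $K$ to be unbounded (a compact $K$ has spherical, hence compact, boundary); the degenerate case $K=\R^n$ (empty boundary) is excluded by the hypothesis; and $\rc(K_0)\subset\rc(K)$ uses that for a closed convex set a recession direction at one point is a recession direction everywhere. What your approach buys is independence from Wu's differential-geometric machinery; what the citation buys the paper is brevity.
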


By assumption, the hypersurface $X$ in Theorem \ref{thm:main4} coincides with  $\partial K$ for  some convex set $K$ satisfying the hypothesis of the above lemma. We show that for any unit vector $u\in\rc(K)\cap(-\nc(K))$,  $\partial K$  is an entire graph in the direction $u$, i.e., it intersects every line parallel to $u$.

We may suppose for convenience that $u=(0,\dots,0,1)$,  and  $K$ lies  in the half-space $x_n\geq 1$. Let $\H^n$ be the open half-space $x_n> 0$ and consider the projective transformation $P\colon \mathbf{H}^n\to \R^n$ given by
$$
P(x_1,\dots, x_n):=\left (\frac{x_1}{x_n},\dots, \frac{x_{n-1}}{x_n}, \frac{1}{x_n}\right).
$$
Note that $P$ preserves line segments, and so maps convex sets to convex sets. In particular, the closure $\ol{P(K)}$ will be a closed convex set.  Next let $f\colon\R^{n}\to\R$ be the algebraic function with $\partial K=f^{-1}(0)$ and suppose that $f$ has degree $d$. Then 
$$\ol f(x):=(x_n)^d f(P(x)),$$
is an algebraic function on $\R^n$,  and it is not hard to  check that
\begin{equation}\label{eq:pkf}
\partial \ol{P(K)}=\ol{f}^{-1}(0).
\end{equation}
So by Theorem \ref{thm:main3}, $\partial \ol{P(K)}$ is $\C^1$, since it is a real analytic convex hypersurface.

Next note that $o\in\partial \ol{P(K)}$, since by assumption $K$ contains the ray $(0,\dots,0,t)$ where $t\geq 1$. Since $\partial \ol{P(K)}$ is a $\C^1$ convex hypersurface supported by $\{x_n=0\}$, it follows  that each  ray $\ell$ given by $tv$ where $v$ is a unit vector with $\langle v, u\rangle>0$ and $t\geq 0$,  intersects $\partial \ol{P(K)}\cap \H^n=\partial P(K)=P(\partial K)$ in exactly one point, see Figure \ref{fig:projtran}. In particular,
$$
P(\partial K)\cap\ell\neq\emptyset.
$$
Now note that $P^2$ is the identity map on $\H^n$.  Thus the last expression yields that
$$
\partial K\cap P(\ell)=P^2(\partial K)\cap P(\ell)=P\big(P(\partial K)\cap \ell\big)\neq\emptyset.
$$
It remains only to note that $P(\ell)$ is parallel to $(0,\dots,0,1)$, and intersects $\{x_n=1\}$ at the same point as does $\ell$, which completes the proof that $\partial K$ is an entire graph.
 
 \begin{figure}[h]
   \centering
    \begin{overpic}[height=1.75in]{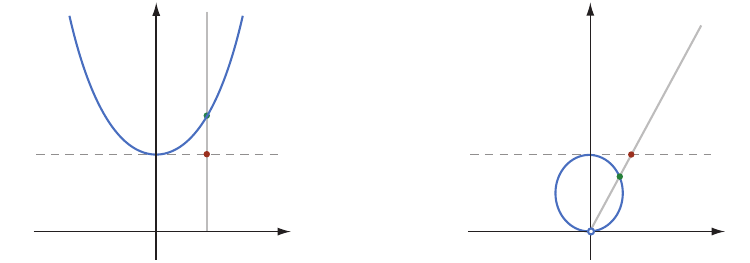} 
    \put(38,14){$x_n=1$}
     \put(5,23){$\partial K$}
      \put(21.25,26){$P(\ell)$}
       \put(64,7){$P(\partial K)$}
        \put(92,26){$\ell$}
    \end{overpic}
   \caption{}
   \label{fig:projtran}
\end{figure}

 \subsection{}\label{subsec:projtranform}
 Now we show that if $X=\partial K$ is strictly convex, then its projective closure is a $\C^1$ hypersurface in $\RP^n$. 
 First, let us recall that the standard embedding $i\colon\R^n\to\RP^n$ is given by
 $$
 (x_1,\dots,x_n)\overset{i}{\longmapsto} [x_1:\dots:x_{n}:1],
 $$
 where $[x_1:\dots:x_{n+1}]$ are the homogeneous coordinates of $\RP^n$. Then the projective closure of  $X$ is  defined as $\overline{i(X)}$, i.e., the closure of $i(X)$ in $\RP^n$. To show that $\overline{i(X)}$ is $\C^1$, we need to establish that the closure $\ol{P(K)}$ is compact, which  in turn is due to the following basic fact:
 
 \begin{lem}\label{lem:rc}
 Let $K\subset\R^n$ be a convex set. Suppose that $K$ is contained between a pair of hyperplanes $H_0$ and $H_1$, and $H_0\cap K$ is compact and nonempty. Then $K$ is compact.
 \end{lem}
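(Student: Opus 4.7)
The plan is a short recession-cone argument, with the only subtle point being what ``between'' means. Since in the application this lemma will be applied with $H_0 = \{x_n = 0\}$ and $H_1 = \{x_n = 1\}$, I read the hypothesis as placing $K$ inside the closed slab bounded by a pair of \emph{parallel} hyperplanes. Replacing $K$ by its closure $\overline{K}$ preserves all hypotheses (the slab is closed, and $\overline{K} \cap H_0$ is still compact) and does not affect whether the conclusion holds, so I may assume $K$ is a closed convex set. The slab containment already bounds $K$ in the one direction normal to $H_0$, so the task reduces to bounding $K$ in the $n-1$ directions parallel to $H_0$.

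Next I would proceed by contradiction. Suppose $K$ is unbounded, fix $p \in H_0 \cap K$, and choose $x_i \in K$ with $\|x_i - p\| \to \infty$. Passing to a subsequence, the unit vectors $(x_i - p)/\|x_i - p\|$ converge to some $v \in \S^{n-1}$. A standard convexity argument --- truncate each segment $[p, x_i]$ at a fixed length, use convexity to stay inside $K$, and use that $K$ is closed to take the limit --- shows the entire ray $\{p + tv : t \geq 0\}$ lies in $K$; that is, $v \in \rc(K)$.

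Finally, because this ray must remain in the slab between $H_0$ and $H_1$, the direction $v$ has to be parallel to $H_0$ (otherwise $p + tv$ would exit the slab in finite time, since $H_0$ and $H_1$ are parallel). But then $p + tv \in H_0$ for every $t \geq 0$, so the whole ray lies in $H_0 \cap K$, contradicting the assumed compactness (in particular, boundedness) of $H_0 \cap K$. Hence $K$ is bounded, and being closed and convex, is compact.

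The only genuine obstacle is the ambiguity of the word ``between'' for a pair of hyperplanes; once parallelism is adopted (which is forced by the intended application, where $\overline{P(K)}$ is trapped between $\{x_n=0\}$ and $\{x_n=1\}$), the recession-cone step goes through with no further work.
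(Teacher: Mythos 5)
Your core argument is correct and is essentially the paper's argument: both proofs come down to the fact that an unbounded closed convex set contains a ray, that the (parallel) slab forces any such ray to be parallel to $H_0$, and that this produces a ray inside $K\cap H_0$, contradicting compactness. The only real difference is cosmetic: the paper takes a ray emanating from an arbitrary point of $K$ and transfers it to $H_0$ via the recession-cone identity $\rc(K\cap H)=\rc(K)\cap\rc(H)=\rc(K\cap H_0)$, whereas you base the ray at a point of $K\cap H_0$ from the start and build it by hand from a limit of unit secant directions, which avoids quoting that identity. Your reading of ``between'' as the closed slab bounded by parallel hyperplanes is the intended one and is what the paper's proof uses as well (it is what makes the ray parallel to $H_0$).

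One caveat: your reduction to closed $K$ is not justified as written. The parenthetical claim that $\ol K\cap H_0$ is still compact is false in general: take $K=\{(x,y)\in\R^2: 0<y\le 1\}\cup\{(0,0)\}$, which is convex, lies in the slab between $H_0=\{y=0\}$ and $H_1=\{y=1\}$, and has $K\cap H_0=\{(0,0)\}$ compact and nonempty, while $\ol K\cap H_0$ is the entire $x$-axis. The same example shows the lemma is simply false for non-closed $K$ (that $K$ is unbounded), so no closure trick can rescue the literal statement; the hypothesis of closedness has to be read into it. This is an imprecision of the lemma itself rather than a defect of your main argument --- in the application $K=\ol{P(K)}$ is closed, and the paper's own proof tacitly assumes closedness through the equivalence ``compact iff containing no rays.'' So: delete the closure reduction, state that $K$ is assumed closed (as in the application), and the rest of your proof stands as is.
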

 \begin{proof}
 First note that $K$ is compact if and only if it does not contain any rays, or equivalently $\rc(K)=\emptyset$. So, if $K$ is not compact, then there exists a ray $\ell$ in $K$ which emanates from a point $p$ of $K$. Let $H$ be the hyperplane passing through $p$ which is parallel to $H_0$. Then $\ell\in\rc(K\cap H)$. Further note that since $H$ and $H_0$ are parallel, $\rc(H)=\rc(H_0)$. So
 $$
 \rc(K\cap H)=\rc(K)\cap\rc(H)=\rc(K)\cap\rc(H_0)=\rc(K\cap H_0)\neq\emptyset,
 $$
 which is a contradiction since $K\cap H$ is compact.
 \end{proof}
 
 Now note that if $\partial K$ is strictly convex, and as in the previous subsection we assume that $\partial K$ is supported below by the hyperplane $x_n=1$, then this hyperplane intersects $\partial K$ and therefore $K$ at only one point (which is compact). Consequently $\ol{P(K)}$ will intersect $\{x_n=1\}$ only at one point as well, since $P$ is the identity on $\{x_n=1\}$. Thus, since $\ol{P(K)}$ is contained between $\{x_n=0\}$ and $\{x_n=1\}$, it follows from Lemma \ref{lem:rc} that $\ol{P(K)}$ is compact. So $\ol{P(K)}\cap\{x_n=0\}$ is compact. But recall that $\partial \ol{P(K)}$ is algebraic, which yields that so is $\ol{P(K)}\cap\{x_n=0\}$. Consequently, $\ol{P(K)}\cap\{x_n=0\}$ may not contain any line segments, so it must consist of only a single point since $\ol{P(K)}\cap\{x_n=0\}$ is convex.
 This implies that 
 $$
 \partial\ol{P(K)}=\ol{P(\partial K)}.
 $$
  So $\ol{P(\partial K)}$ is a $\C^1$ hypersurface by \eqref{eq:pkf}. Further, since $\ol{P(\partial K)}$ is compact, it coincides with its own projective closure. Thus, the projective closure of $\ol{P(\partial K)}$ is a $\C^1$ hypersurface. This  yields that the projective closure of $\partial K$ must be $\C^1$ as well, due to the commutativity of the following diagram:
 $$
 \begin{CD}
 \RP^n @>\tilde{P}>> \RP^n\\
 @AAiA                       @AAiA\\
 \H^n @>P>> \R^n
 \end{CD}
$$
where, in terms of the homogeneous coordinates $[x_1:\dots:x_{n+1}]$,  $\tilde P$ is the map
$$
[x_1:\dots:x_n:x_{n+1}]\overset{\tilde P}{\longmapsto} [x_1:\dots:x_{n+1}:x_{n}].
$$
So we have,
$
 i(\partial K)=\tilde P^{-1}(i(P(\partial K))),
$
which yields that
$
\ol{i(\partial K)}=\tilde P^{-1}(\ol{i(P(\partial K))})
$
because $\tilde P$ is a homeomorphism. Further note that $\ol{i(P(\partial K))}=i(\ol{P(\partial K)})$, because $P(\partial K)$ is bounded, and $i$ is a homeomorphism restricted to any compact subset of $\R^n$. So we conclude that 
$$
\ol{i(\partial K)}=\tilde P^{-1}\big(i(\ol{P(\partial K)})\big)
$$
which completes the proof.

 \section{Examples (and Counterexamples)}
 Here we gather a number of  examples which establish the optimality of various aspects of the theorems discussed in the introduction. All curves described below may be rotated around their axis of symmetry to obtain surfaces with analogous properties.

 \begin{example}\label{ex:tub}
It is easy to see that without the assumption on multiplicity in Theorem \ref{thm:main1}, the set $X$ may not be a hypersurface; see Figure \ref{fig:y}, specially the middle diagram, which also demonstrates that the value of $m$ in Theorem \ref{thm:main1} is optimal. Further, the collection of hypersurfaces which make up $X$ may not be finite, even locally.  Figure \ref{fig:tub} 
\begin{figure}[h]
 \centering
   \begin{overpic}[height=1in]{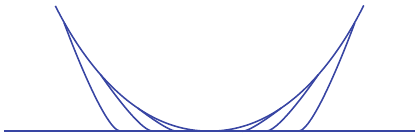} 
  \end{overpic}
  \caption{}
  \label{fig:tub}
  \end{figure}
shows one such example.  It has continuously varying flat tangent cones, but cannot be decomposed into a finite number of hypersurfaces near the neighborhood surrounding the apex 
of the parabola.
\end{example}

 \begin{example}\label{ex:deltoids}
For any given $\alpha>0$,  there 
is a convex real algebraic hypersurface which is not  $\C^{1,\alpha}$. Explicit examples are given by the convex  curves $y^{2n-1}=x^{2n}$, where $n=1$, $2$, $3$, \dots. These curves are $\C^1$ by Theorem \ref{thm:main1}, and are $\C^\infty$ everywhere except at the origin $o$. But they  are not $\C^{1,\alpha}$, for $\alpha>1/(2n-1)$, in any neighborhood of $o$.
All these properties are shared by the projectively equivalent family 
of closed convex curves $(1-y)y^{2 n - 1} = x^{2 n}$ which is depicted in Figure \ref{fig:deltoids}, for $n=2$, $3$, $4$. Here the singular point $o$ lies at the bottom of each curve.
 \begin{figure}[h]
   \centering
    \begin{overpic}[height=1in]{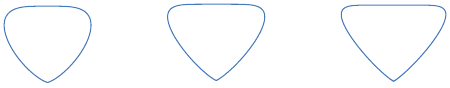} 
   \end{overpic}
   \caption{}
   \label{fig:deltoids}
\end{figure}
\end{example}

 \begin{example}\label{ex:analytic}
There are real algebraic hypersurfaces with flat  tangent cones which  are supported by  balls (of nonuniform radii) at each point but are not $\C^1$, such as the sextic surface $z^3=x^5 y+xy^5$
depicted in Figure \ref{fig:saddle2}.
This surface is regular in the complement of the origin. So it has  support balls there. Further,  one might directly check that there is even a support ball  at the origin. On the other hand the surface is not $\C^1$, since all its tangent planes  along the $x$ and $y$ axis are vertical, while at the origin the tangent plane is horizontal.
 \begin{figure}[h]
   \centering
   \subfloat[]{\label{fig:saddle2}\includegraphics[height=1.5in]{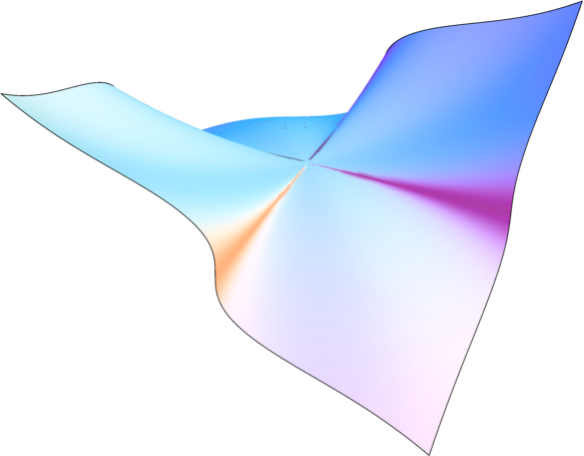}}\hspace{0.75in}
   \subfloat[]{\label{fig:saddle1}\includegraphics[height=1.4in]{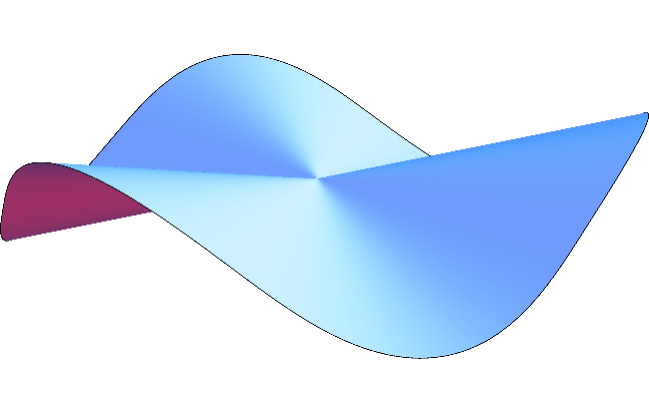}}
         \caption{}
\end{figure}
\end{example}

 \begin{example}\label{ex:fermat} 
There are real algebraic hypersurfaces whose tangent cones are hypersurfaces but are not hyperplanes, such as the Fermat cubic
$x^3+y^3=z^3,$
see Figure \ref{fig:saddle1}.
All points of this surface, except the origin, are regular and therefore the tangent cones are flat everywhere in the complement of the origin. On the other hand, the tangent cone at the origin is the surface itself, since the surface is invariant under homotheties. 
\end{example}

\begin{example}\label{ex:dingdong}
There are real semialgebraic convex hypersurfaces in $\R^n$ which are not $\C^1$. An example is the portion of the ``Ding-dong  curve" \cite{hauser} given by 
$x^2 = y^2 (1 - y)$ and $y\geq 0$; see Figure \ref{fig:dingdong1}. Also note that this curve is projectively equivalent to $y (1 - x^2) = 1$, $-1\leq x\leq 1$, depicted in Figure \ref{fig:dingdong2}, which shows that there are semialgebraic strictly convex hypersurfaces which are homeomorphic to $\R^{n-1}$ but are not entire graphs.
\begin{figure}[h]
     \centering
   \subfloat[]{\label{fig:dingdong1}\includegraphics[height=1.1in]{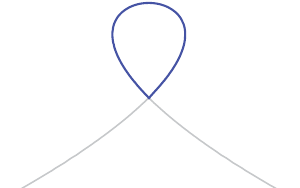}}\hspace{0.5in}
   \subfloat[]{\label{fig:dingdong2}\includegraphics[height=1.1in]{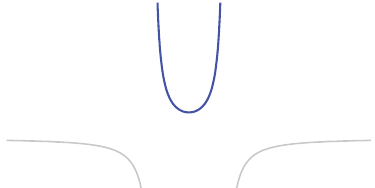}}
      \caption{}
\end{figure}
\end{example}

 \begin{example}\label{ex:horseshoe}
 There are real analytic convex hypersurfaces homeomorphic to $\R^{n-1}$ whose projective closure is not $\C^1$ or even differentiable; for instance,
$
x^2+e^{-y}=1
$
defines an unbounded convex  planar curve which  is contained within the slab $-1\leq x\leq 1$ and thus is not an entire graph, see Figure \ref{fig:horseshoe1}. 
\end{example}

 \begin{example}\label{ex:teardrop}
There are smooth real algebraic  hypersurfaces which are homeomorphic to $\R^{n-1}$ but are not entire graphs. Consider for instance the planar curve
$
y (1-x^2 y )=1
$
 pictured in Figure \ref{fig:horseshoe2}. This curve is projectively equivalent to  the ``pear-shaped quartic" or \emph{Piriform}   given by $x^2=y^3(1-y)$, see Figure \ref{fig:tear}.
\end{example}

\begin{example}\label{ex:pcl}
There are real algebraic convex hypersurfaces homeomorphic to $\R^{n-1}$ whose projective closure is not  $\C^1$ or even a topological hypersurface. Consider for instance the parabolic cylinder $P\subset\R^3$ given by
$
2z=x^2+1
$.
 This surface is projectively equivalent to the circular cylinder $C$ given by $(z-1)^2+x^2=1$, via the transformation $(x,y,z)\mapsto (x/z,y/z,1/z)$. Translating $C$, we obtain $C'$ given by $z^2+x^2=1$. Now consider the projective transformation $(x,y,z)\mapsto (x/y,1/y,z/y)$ which maps $C'$ to the cone $z^2+x^2=y^2$. Since, as we discussed in Section \ref{subsec:projtranform}, these projective transformations extend to diffeomorphism of $\RP^3$, we then conclude that that the projective closure of our original surface $P$ had a conical singularity.
\end{example}

\begin{figure}[h]
     \centering
   \subfloat[]{\label{fig:horseshoe1}\includegraphics[height=1.1in]{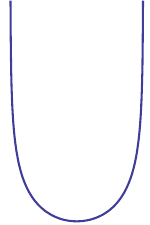}}\hspace{1in}
   \subfloat[]{\label{fig:horseshoe2}\includegraphics[height=1.1in]{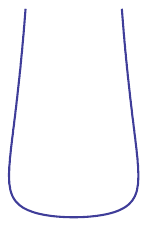}}\hspace{1in}
     \subfloat[]{\label{fig:tear}\includegraphics[height=1.1in]{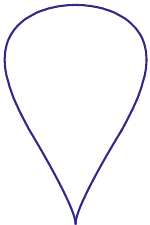}}
      \caption{}
\end{figure}

\appendix
\section{Another Proof for\\ The Regularity of Real Analytic Convex Hypersurfaces}\label{sec:appendix}
Using the classical resolution of singularities for planar curves, we describe here an  alternative proof of the $\C^1$-regularity of real analytic convex hypersurfaces (which was a special case of Theorem \ref{thm:main3}).  More specifically, we employ the following  basic fact, c.f.  \cite[Lemma 3.3]{milnor:singular}, which may be considered the simplest case of Hironaka's uniformization theorem \cite{BM,hironaka}. By a \emph{real analytic curve} here we mean a real analytic set of dimension one.

\begin{lem}[Newton-Puiseux \cite{griffiths,milnor:singular}]
Let $\Gamma\subset\R^2$ be a real analytic curve and $p\in\Gamma$ be a nonisolated point. Then there is an open neighborhood $U$ of $p$
in $\R^2$ such that $\Gamma\cap U=\cup_{i=1}^k \Gamma_i$ where each ``branch" $\Gamma_i$ is homeomorphic to $\R$ via a real analytic (injective) parametrization $\gamma_i\colon (-1,1)\to \Gamma_i$.\qed
\end{lem}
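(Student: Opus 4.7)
The plan is to reduce the study of $\Gamma$ near $p$ to the zero set of a single distinguished polynomial in $y$ via Weierstrass preparation, factor it into irreducibles, apply the classical complex Newton--Puiseux theorem to each irreducible factor, and then extract the real branches.

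First I would translate so that $p=(0,0)$ and shrink $U$ so that $\Gamma\cap U = f^{-1}(0)$ for a real analytic $f$ with $f(0)=0$ that is not identically zero near $0$. Since $p$ is a nonisolated point of $\Gamma$, a generic linear change of coordinates makes $f$ regular of some finite order $n$ in $y$, i.e.\ $f(0,y) = c\,y^n + O(y^{n+1})$ with $c\neq 0$. The Weierstrass preparation theorem then yields $f = u\cdot P$ on a smaller neighborhood, where $u$ is a unit in $\R\{x,y\}$ and $P(x,y) = y^n + a_{n-1}(x)y^{n-1} + \cdots + a_0(x)$ is a Weierstrass polynomial with $a_i(0)=0$. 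Since $\R\{x\}[y]$ is a unique factorization domain, $P$ further splits as $P = \prod_j P_j$ into finitely many irreducible Weierstrass polynomials, and it suffices to parametrize $Z(P_j)\cap\R^2$ for each $j$ separately.

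For each irreducible $P_j$ of $y$-degree $n_j$ I would invoke the classical (complex) Newton--Puiseux theorem: there exists a convergent complex power series $\phi_j(t)$ such that $P_j(t^{n_j},\phi_j(t))\equiv 0$, the complex germ $Z(P_j)$ at $0$ is the injective image of $t\mapsto (t^{n_j},\phi_j(t))$ on a small disk, and the Galois group $\mathbb{Z}/n_j\mathbb{Z}$ acts on this parametrization by $t\mapsto \zeta t$ with $\zeta = e^{2\pi i/n_j}$. The real locus is then obtained by intersecting with the fixed set of complex conjugation; combined with the $\zeta$-action, a short case analysis according to the parity of $n_j$ and the reality of the coefficients of $\phi_j$ shows that $Z(P_j)\cap\R^2$ near $0$ is either empty, the single point $\{p\}$, or the image of at most two real analytic arcs of the form $t\mapsto (\pm t^{n_j},\phi_j^{\mathrm{re}}(\pm t))$ with $t$ real, each of which is injective and, after rescaling, parametrized by $t\in(-1,1)$.

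The main obstacle is the bookkeeping in this last step: one must verify that the union of real branches collected from the different irreducible factors $P_j$ exhausts $\Gamma\cap U$ after shrinking $U$, that each parametrization $\gamma_i$ is both injective and proper onto its image, and that no two different irreducible factors contribute the same branch. Injectivity of $\gamma_i$ follows because $x=t^{n_j}$ recovers $t$ up to an $n_j$-th root of unity and, by irreducibility of $P_j$, the complex parametrization is injective on a small punctured disk; coverage follows from the fact that $P = \prod_j P_j$ describes the local germ of $\Gamma$ completely. Irreducible factors with no real branches (e.g.\ $x^2+y^2$) contribute at most the point $\{p\}$ and may be absorbed into any surviving branch or else disregarded after further shrinking of $U$.
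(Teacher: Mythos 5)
Your argument is correct and is essentially the route the paper itself relies on: the paper gives no proof of this lemma, deferring to the classical complex Newton--Puiseux theorem (Griffiths) and to Milnor's extraction of the real statement from it, which is precisely your scheme of Weierstrass preparation, factorization into irreducible Weierstrass polynomials, complex Puiseux parametrization, and identification of the real points. The one point to tighten is that the Puiseux parametrization applies to factors irreducible over $\mathbf{C}\{x\}[y]$, so a factor irreducible over $\R\{x\}[y]$ either remains irreducible over $\mathbf{C}$ and yields exactly one real-analytic branch through $p$ (after rotating the parameter $t$ so that the series has real coefficients, the real locus being the fixed line of the induced antiholomorphic involution on the parameter disk), or splits into two conjugate factors and contributes only the point $p$; your ``at most two arcs per factor'' overcounts, but harmlessly, since the lemma only asks for some finite decomposition into analytically parametrized branches.
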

See \cite[p. 76--77]{griffiths} for the proof of the above lemma in the complex case, which in turn yields  the real version as described in \cite[p. 29]{milnor:singular}. Other treatments of the complex case may also be found in \cite{casas,wall}. The main ingredient here is Puiseux's decomposition theorem for the germs of  analytic functions, which goes back to Newton, see \cite[Thm. 1.1]{kollar} or \cite[Sec. 2.1]{cutkosky}.   
Next note that any simple planar curve which admits an analytic parametrization must be piecewise smooth, because the speed of the parametrization can vanish only on a discrete set; furthermore, the curve may not have any ``corners" at these singularities:
\begin{lem}
Let $\gamma\colon (-1,1)\to\R^2$ be a nonconstant real analytic map, and set $T(t):=\gamma'(t)/\|\gamma'(t)\|$.
Then 
$
\lim_{t\to a^+}T(t)=\pm \lim_{t\to a^-}T(t)
$
for all $a\in(-1,1)$. In particular, $\gamma$  has  continuously turning tangent lines.
\end{lem}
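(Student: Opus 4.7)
The strategy is to reduce to a local Taylor expansion around the point $a$, using analyticity to ensure that $\gamma$ has a well-defined ``order of vanishing'' at $a$.

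First, in the easy case where $\gamma'(a)\neq 0$, the map $T$ is continuous at $a$ and the one-sided limits both equal $T(a)$. So the substantive case is when $\gamma'(a)=0$. Since $\gamma$ is nonconstant and real analytic, not all derivatives of $\gamma$ vanish at $a$, so there is a smallest integer $k\geq 2$ with $\gamma^{(k)}(a)\neq 0$. Set $v:=\gamma^{(k)}(a)/(k-1)!\neq 0$.

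Next, by Taylor's theorem with remainder (or by expanding $\gamma'$ as a convergent power series in $t-a$), we can write
$$
\gamma'(t)=(t-a)^{k-1}\big(v+(t-a)\eta(t)\big),
$$
where $\eta\colon(-1,1)\to\R^2$ is continuous (in fact analytic) in a neighborhood of $a$. Dividing by $\|\gamma'(t)\|$ gives, for $t$ close to $a$ with $t\neq a$,
$$
T(t)=\frac{(t-a)^{k-1}}{|t-a|^{k-1}}\cdot\frac{v+(t-a)\eta(t)}{\|v+(t-a)\eta(t)\|}=\operatorname{sgn}(t-a)^{k-1}\cdot\frac{v+(t-a)\eta(t)}{\|v+(t-a)\eta(t)\|}.
$$
Since $v\neq 0$, the second factor tends to $v/\|v\|$ as $t\to a$ from either side. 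The first factor equals $+1$ for $t>a$ and $(-1)^{k-1}$ for $t<a$. Therefore
$$
\lim_{t\to a^+}T(t)=\frac{v}{\|v\|},\qquad \lim_{t\to a^-}T(t)=(-1)^{k-1}\frac{v}{\|v\|},
$$
and in particular $\lim_{t\to a^+}T(t)=\pm\lim_{t\to a^-}T(t)$, with the sign being $+$ when $k$ is odd and $-$ when $k$ is even. This is precisely the claim, and the ``in particular'' statement about continuously turning tangent lines follows since both limits define the same unoriented tangent direction.

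There is really no obstacle here beyond the slightly delicate bookkeeping: the one feature we must not lose is that $v\neq 0$, which is guaranteed by choosing $k$ to be the order of vanishing of $\gamma'$ at $a$, and the existence of such a $k$ is exactly where analyticity (rather than mere smoothness) is used, since a nonzero real analytic function cannot vanish to infinite order.
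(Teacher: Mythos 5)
Your proof is correct and follows essentially the same route as the paper's: both factor $\gamma'(t)=(t-a)^{m}\xi(t)$ with $\xi(a)\neq 0$ (your $m=k-1$, $\xi(t)=v+(t-a)\eta(t)$), using the finite order of vanishing guaranteed by analyticity, and then read off the sign $(\pm1)^{m}$ from the factor $\bigl((t-a)/|t-a|\bigr)^{m}$. The only difference is cosmetic bookkeeping via the Taylor coefficient $v=\gamma^{(k)}(a)/(k-1)!$ rather than an abstract analytic factor $\xi$.
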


By ``tangent line" here we   mean the symmetric tangent cone of the image of $\gamma$ (Section \ref{subsec:prelim1}). Note also that  the analyticity assumption in the above lemma  is essential (the curve $y=|x|$, $-1<x<1$, for instance, admits the well-known $\C^\infty$ parametrization given by $\gamma(t):=e^{-1/t^2}(t/|t|,1)$ for $t\neq 0$, and $\gamma(0):=(0,0)$).
\begin{proof}
We may assume $a=0$.  If $\|\gamma'(0)\|\neq 0$, then the proof immediately follows. So suppose that $\|\gamma'(0)\|= 0$. Then, by analyticity of $\gamma'$, there is an integer $m> 0$ and an analytic 
map $\xi \colon(-\epsilon,\epsilon)\to \R^2$ with $\|\xi(0)\|\neq 0$ such that $\gamma'(t)=t^m\xi(t)$. Thus
$$
T(t)=\frac{t^m\xi(t)}{\|t^m\xi(t)\|}
=
\left(\dfrac{t}{|t|}\right)^m
\dfrac{\xi(t)}{\|\xi(t)\|},
$$
which in turn yields:
$$
\lim_{t\to 0^+}T(t)
=
1^m\dfrac{\xi(0)}{\|\xi(0)\|}=(-1)^{2m}\dfrac{\xi(0)}{\|\xi(0)\|}=(-1)^m \lim_{t\to 0^-}T(t).
$$
\end{proof}

The last two lemmas   yield the following basic fact which  generalizes Corollary \ref{cor:cusp}  obtained  earlier from Theorem \ref{thm:main3}. Recall that a simple planar curve has a \emph{cusp} at some point if its tangent cone there consists of a single ray (c.f. Section \ref{subsec:cusp}).

\begin{cor}\label{cor:appendix}
Each branch of a real analytic curve $\Gamma\subset\R^2$ at a nonisolated point $p\in\Gamma$ is either $\C^1$ near $p$ or  has a cusp at $p$.\qed
\end{cor}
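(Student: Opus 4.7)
The plan is to combine the two preceding lemmas. First, by the Newton--Puiseux lemma there is an open neighborhood $U$ of $p$ in which $\Gamma\cap U=\bigcup_{i=1}^k\Gamma_i$, with each branch $\Gamma_i$ parametrized analytically and injectively by some $\gamma_i\colon(-1,1)\to\Gamma_i$. I would fix a branch and reparametrize so that $\gamma:=\gamma_i$ has $\gamma(0)=p$; it suffices to show $\Gamma_i$ is either $\C^1$ near $p$ or has a cusp there.

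By analyticity write $\gamma'(t)=t^m\xi(t)$ with $\xi(0)\neq 0$ (taking $m=0$ if $\gamma'(0)\neq 0$), and integrate to obtain
$$
\gamma(t)-p=t^{m+1}\eta(t),\qquad \eta(0)=\xi(0)/(m+1)\neq 0,
$$
for some analytic $\eta$. Consequently the unit secant $(\gamma(t)-p)/\|\gamma(t)-p\|$ equals $\operatorname{sign}(t)^{m+1}\eta(t)/\|\eta(t)\|$, with limit $\eta(0)/\|\eta(0)\|$ as $t\to 0^+$ and $(-1)^{m+1}\eta(0)/\|\eta(0)\|$ as $t\to 0^-$. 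When $m$ is odd these limits coincide, so $T_p\Gamma_i$ is a single ray and $\Gamma_i$ has a cusp at $p$. When $m$ is even the two half-branches at $p$ have opposite tangent rays, which matches the ``$+$'' case $\lim_{t\to 0^+}T(t)=\lim_{t\to 0^-}T(t)$ of the tangent-line lemma.

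In the remaining (no-cusp) case I would build an explicit $\C^1$ regular reparametrization. Since $m+1$ is odd, $\sigma(t):=t^{m+1}$ is a homeomorphism of a neighborhood of $0$ onto itself, so set $\tilde\gamma(s):=\gamma(\sigma^{-1}(s))=p+s\,\eta(\sigma^{-1}(s))$. For $s\neq 0$ the chain rule gives $\tilde\gamma'(s)=\xi(\sigma^{-1}(s))/(m+1)$, which extends continuously to $s=0$ with nonzero value $\xi(0)/(m+1)$; the factored identity above shows $(\tilde\gamma(s)-p)/s=\eta(\sigma^{-1}(s))\to\eta(0)$, confirming this extension is genuinely $\tilde\gamma'(0)$. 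Thus $\tilde\gamma$ is $\C^1$ with nonvanishing derivative and is injective since $\gamma$ is, so $\Gamma_i$ is a $\C^1$ embedded arc near $p$.

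The main technical point is this last step: although $\sigma^{-1}$ is merely a homeomorphism (with derivative blowing up at $0$), the composition with $\gamma$ smooths out because the vanishing of $\gamma'$ to order $m$ at $0$ exactly cancels the degeneracy of $\sigma$. The factored form $\gamma(t)-p=t^{m+1}\eta(t)$ is what makes this cancellation transparent and lets one read off $\C^1$-regularity at the singular parameter.
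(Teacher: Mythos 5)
Your argument is correct and follows essentially the same route as the paper, which deduces the corollary directly from the Newton--Puiseux parametrization together with the factorization $\gamma'(t)=t^m\xi(t)$, $\xi(0)\neq 0$, used in the tangent-line lemma: the parity of $m$ decides between a cusp and a $\C^1$ branch. Your added details (integrating to $\gamma(t)-p=t^{m+1}\eta(t)$ to read off the secant limits, and the reparametrization $s=t^{m+1}$ in the even case) simply make explicit what the paper leaves implicit.
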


This observation quickly shows that a convex real analytic curve $\Gamma\subset\R^2$ is $\C^1$, since by Lemma \ref{lem:convexsurface} it cannot have any cusps. This in turn yields the same result in higher dimensions, via a slicing argument, as we describe next. Let $X\subset\R^n$ be a real analytic convex hypersurface. There exists then a convex set $K\subset\R^n$ with $\partial K=X$. Thus, by Lemma \ref{lem:convexity}, to establish the regularity of $X$ it suffices to show that through each point $p$ of $X$ there passes a unique support hyperplane. Suppose, towards a contradiction, that there are two distinct support hyperplanes $H_1$ and $H_2$ passing through $p$. Then $L:=H_1\cap H_2$ has dimension $n-2$. Now let $o$ be a point in the interior of $K$. Since $o\not\in L$, there exists a (two dimensional) plane $\Pi\subset\R^n$ which 
is transversal to $L$ at $p$, and 
passes through $o$. Consequently $\Gamma:=X\cap \Pi=\partial (K\cap\Pi)$ is a convex real analytic planar curve, and therefore must be $\C^1$ (by Corollary \ref{cor:appendix}). So $\Gamma$ must have exactly one support line at $p$, which is a contradiction since $\ell_i:=H_i\cap\Pi$ are distinct support lines of $\Gamma$ at $p$.

\section*{Acknowledgments}
We thank Matt Baker, Saugata Basu, Igor Belegradek, Eduardo Casas Alvero, Joe Fu, Frank Morgan, Bernd Sturmfels, Serge Tabachnikov, and Brian White,  for useful communications. Thanks also to the anonymous referee for informing us about Alexander Lytchak's work, and its connection to the last claim in Theorem \ref{thm:main2}.

\bibliographystyle{abbrv}
\bibliography{references}

\end{document}